\DeclareMathAlphabet{\mathpzc}{OT1}{pzc}{m}{it}
\newtheorem{theorem}{Theorem}[section]
\newtheorem{corollary}[theorem]{Corollary}
\newtheorem{definition}[theorem]{Definition}
\newenvironment{proof}[1][Proof]{\noindent \emph{#1.} }
{\hfill \ \rule{0.5em}{0.5em}}
\newtheorem{lemma}[theorem]{Lemma}
\newtheorem{proposition}[theorem]{Proposition}
\newtheorem{assumption}[theorem]{Assumption}
\newenvironment{manualtheorem}[1]{%
  \manualtheoreminner
}{\endmanualtheoreminner}
\numberwithin{equation}{section}
\numberwithin{table}{section}
\numberwithin{figure}{section}
\newtheorem{remark}[theorem]{Remark}
\newtheorem{example}[theorem]{Example}
\newcommand{\cA}{{\cal A}}
\newcommand{\cH}{{\cal H}}
\newcommand{\cS}{{\cal S}}
\newcommand{\cD}{{\cal D}}
\newcommand{\cO}{{\cal O}}
\newcommand{\bx}{x}
\newcommand{\bn}{n}
    \newcommand\quotient[2]{
        \mathchoice
            {
                \text{\raise1ex\hbox{$#1$}\Big/\lower1ex\hbox{$#2$}}%
            }
            {
                #1\,/\,#2
            }
            {
                #1\,/\,#2
            }
            {
                #1\,/\,#2
            }
    }
\newcommand{\re}{{\rm e}}
\newcommand{\ri}{{\rm i}}
\newcommand{\rd}{{\rm d}}
\newcommand{\HoDk}{H^1_k(B_R\cap \obstacle_+)}
\newcommand{\beq}{\begin{equation}}
\newcommand{\eeq}{\end{equation}}
\newcommand{\beqs}{\begin{equation*}}
\newcommand{\eeqs}{\end{equation*}}
\newcommand{\bit}{\begin{itemize}}
\newcommand{\eit}{\end{itemize}}
\newcommand{\ben}{\begin{enumerate}}
\newcommand{\een}{\end{enumerate}}
\newcommand{\bal}{\begin{align}}
\newcommand{\eal}{\end{align}}
\newcommand{\bals}{\begin{align*}}
\newcommand{\eals}{\end{align*}}
\newcommand{\bse}{\begin{subequations}}
\newcommand{\ese}{\end{subequations}}
\newcommand{\bpr}{\begin{proposition}}
\newcommand{\epr}{\end{proposition}}
\newcommand{\bre}{\begin{remark}}
\newcommand{\ere}{\end{remark}}
\newcommand{\bpf}{\begin{proof}}
\newcommand{\epf}{\end{proof}}
\newcommand{\ble}{\begin{lemma}}
\newcommand{\ele}{\end{lemma}}
\newcommand{\bco}{\begin{corollary}}
\newcommand{\eco}{\end{corollary}}
\newcommand{\bex}{\begin{example}}
\newcommand{\eex}{\end{example}}
\newcommand{\bth}{\begin{theorem}}
\newcommand{\enth}{\end{theorem}}
\newcommand{\Rea}{\mathbb{R}}
\newcommand{\Com}{\mathbb{C}}
\newcommand{\GR}{{\partial B_R}}
\newcommand{\eps}{\varepsilon}
\newcommand{\pdiff}[2]{\frac{\partial #1}{\partial #2}}
\newcommand{\gu}{\nabla u}
\newcommand{\vb}{\overline{v}}
\newcommand{\gvb}{\overline{\nabla v}}
\newcommand{\tendi}{\rightarrow \infty}
\def\XXint#1#2#3{{\setbox0=\hbox{$#1{#2#3}{\int}$}
     \vcenter{\hbox{$#2#3$}}\kern-.5\wd0}}
\definecolor{myblue}{rgb}{0,0,0.6}
\newcommand*{\N}[1]{\left\|#1\right\|}
\newcommand{\tfa}{\text{ for all }}
\newcommand{\tfor}{\text{ for }}
\newcommand{\tif}{\text{ if }}
\newcommand{\ton}{\text{ on }}
\newcommand{\tand}{\text{ and }}
\newcommand{\tst}{\text{ such that }}
\newcommand{\tfind}{\text{ find }}
\newcommand{\tthen}{\text{ then }}
\newcommand{\Hilb}{\cH}
\newcommand{\vertiii}[1]{{\left\vert\kern-0.25ex\left\vert\kern-0.25ex\left\vert #1
    \right\vert\kern-0.25ex\right\vert\kern-0.25ex\right\vert}}
\newcommand{\DtN}{{\rm DtN}_k}
\definecolor{jwcol}{RGB}{27, 137, 18}  
\definecolor{dalcol}{rgb}{0.8,0,0}
\definecolor{escol}{rgb}{0,0,0.8}
\definecolor{estcol}{rgb}{0,0.5,0}
\definecolor{esnewcol}{rgb}{0,0.5,0}
\newcommand{\obstacle}{{\cO}}
\newcommand{\Imag}{{\rm Im}}
\newcommand{\supp}{{\rm supp}}
\newcommand{\Cosc}{C_{\rm{osc}}}
\newcommand{\abs}[1]{{\left\lvert{#1}\right\rvert}}
\newcommand{\norm}[1]{{\left\lVert{#1}\right\rVert}}
\newcommand{\ang}[1]{{\left\langle{#1}\right\rangle}}
\newcommand{\RR}{\mathbb{R}}
\newcommand{\ZZ}{\mathbb{Z}}
\newcommand{\OR}{B_R}
\newcommand{\truncbound}{\partial B_R}
\newcommand{\mymatrix}[1]{\mathsf{#1}}
\newcommand{\MA}{{\mymatrix{A}}}
\newcommand{\widehatx}{\widehat{\bx}}
\newcommand{\Cqo}{{C_{\rm qo}}}
\newcommand{\Csol}{{C_{\rm sol}}}
\newcommand{\Ccont}{{C_{\rm cont}}}
\newcommand{\Cint}{{C_{\rm int}}}
\newcommand{\CDTN}{{C_{\rm DtN}}}
\newcommand{\Csob}{{C_{\rm Sob}}}
\newcommand{\mythmname}[1]{\textbf{\emph{(#1)}}}
\newcommand{\uhigh}{u_{H^2}}
\newcommand{\ulow}{u_{\mathcal A}}
\newcommand{\vhigh}{v_{H^2}}
\newcommand{\vlow}{v_{\mathcal A}}
\newcommand{\Pilow}{\Pi_{\rm Low}}
\newcommand{\Pihigh}{\Pi_{\rm High}}
\newcommand{\Lap}{\Delta}
\newcommand{\hsc}{\hbar}
\newcommand{\WFh}{\operatorname{WF}_{\hsc}}
\newcommand{\pa}{\partial}
\newcommand{\Op}{{\rm Op}}
\newcommand{\hilbert}{\mathcal{H}}
\newcommand{\domain}{\mathcal{D}}
\newcommand{\torus}{\mathbb{T}}
\newcommand{\CI}{C^\infty}
\newcommand{\F}{\mathcal{F}}
\newcommand{\Tbar}{\overline{T}}
\DeclareMathOperator{\RC}{\mathsf{RC}}
\newcommand{\RlocA}{R_{_{\rm  \Romannum{4}}}}
\newcommand{\RlocB}{R_{_{\rm \Romannum{3}}}}
\newcommand{\RfarA}{R_{_{\rm \Romannum{1}}}}
\newcommand{\RfarB}{R_{_{\rm \Romannum{2}}}}
\newcommand{\residual}{O(\hsc^\infty)_{\mathcal D_\hsc^{\sharp,-\infty} \rightarrow \mathcal D_\hsc^{\sharp,\infty}}}
\newcommand{\uH}{u_{\rm High}}
\newcommand{\uL}{u_{\rm Low}}
\newcommand{\subsetH}{\frak H}
\newcommand{\EMZ}{\tau}
\newcommand{\Optorus}{\operatorname{Op}_\hsc^{\!\!\mathbb T^d_{R_{\sharp}}}}
\newcommand{\settheoremtag}[1]{
  \let\oldthetheorem\thetheorem
  \renewcommand{\thetheorem}{#1}
  \g@addto@macro\endtheorem{
    \addtocounter{theorem}{-1}
    \global\let\thetheorem\oldthetheorem}
  }
\begin{document}

\pagenumbering{arabic}
\title{Decompositions of high-frequency Helmholtz solutions via functional calculus, and application to the finite element method}

\author{J.~Galkowski\footnotemark[1]\,\,, D.~Lafontaine\footnotemark[2]\,\,, E.~A.~Spence\footnotemark[3]\,\,, J.~Wunsch\footnotemark[4]}

\date{\today}

\footnotetext[1]{Department of Mathematics, University College London, 25 Gordon Street, London, WC1H 0AY, UK,     \tt J.Galkowski@ucl.ac.uk}
\footnotetext[2]{CNRS and Institut de Mathématiques de Toulouse, UMR5219;
Universit\'e de Toulouse, CNRS; UPS, F-31062 Toulouse Cedex 9, France; \tt david.lafontaine@math.univ-toulouse.fr}
\footnotetext[3]{Department of Mathematical Sciences, University of Bath, Bath, BA2 7AY, UK, \tt E.A.Spence@bath.ac.uk }
\footnotetext[4]{Department of Mathematics, Northwestern University, 2033 Sheridan Road, Evanston IL 60208-2730, US, \tt jwunsch@math.northwestern.edu}

\maketitle


\begin{abstract}
Over the last ten years, results 
from \cite{MeSa:10}, \cite{MeSa:11}, \cite{EsMe:12}, and \cite{MePaSa:13}
decomposing high-frequency Helmholtz solutions into ``low"- and ``high"-frequency components have had a large impact in the numerical analysis of the Helmholtz equation. These results have been proved for the constant-coefficient Helmholtz equation in either the exterior of a Dirichlet obstacle or an interior domain with an impedance boundary condition. 

Using the Helffer--Sj\"ostrand functional calculus
\cite{HeSj:89}, this paper proves analogous decompositions for scattering problems fitting into the black-box scattering framework of Sj\"ostrand--Zworski \cite{SjZw:91}, thus covering Helmholtz problems with variable coefficients, impenetrable obstacles, and
penetrable obstacles all at once. 

These results allow us to prove new frequency-explicit convergence results for (i) the $hp$-finite-element method ($hp$-FEM) applied to the variable-coefficient Helmholtz equation in the exterior of an analytic Dirichlet obstacle, where the coefficients are analytic in a neighbourhood of the obstacle, and (ii) the $h$-FEM applied to the Helmholtz penetrable-obstacle transmission problem. In particular, the result in (i) shows that the $hp$-FEM applied to this problem does not suffer from the pollution effect.


\end{abstract}


%

\section{Introduction}\label{subsec:intro}

\subsection{Context: the results of \cite{MeSa:10}, \cite{MeSa:11}, \cite{EsMe:12}, \cite{MePaSa:13} and their impact on numerical analysis of the Helmholtz equation.}
\label{sec:1.1}

At the heart of the papers 
 \cite{MeSa:10}, \cite{MeSa:11}, \cite{EsMe:12}, and \cite{MePaSa:13}
are results that decompose solutions of the high-frequency Helmholtz equation, i.e., 
\beq\label{eq:Helmholtz}
\Delta u +k^2 u =-f 
\eeq 
with $k$ large, into 
\bit
\item[(i)] a component with $H^2$ 
regularity, satisfying bounds with improved $k$-dependence compared to those satisfied by the full Helmholtz solution, and
\item[(ii)] an analytic component, satisfying bounds with the same $k$-dependence as those satisfied by the full Helmholtz solution, 
\eit
with these components corresponding to the ``high"- and ``low"-frequency components of the solution. 
In the rest of this paper, we write this decomposition as  $u=\uhigh+\ulow$.

Such a decomposition was obtained for
\bit
\item the Helmholtz equation \eqref{eq:Helmholtz} posed in $\Rea^d$, $d=2,3$, with compactly-supported $f$, and 
with the Sommerfeld radiation condition
\beq\label{eq:src}
\pdiff{u}{r}(\bx) - \ri k u(\bx) = o \left( \frac{1}{r^{(d-1)/2}}\right)
\eeq
as $r:= |\bx|\tendi$, uniformly in $\widehatx:= \bx/r$ \cite[Lemma 3.5]{MeSa:10},
\item the Helmholtz exterior Dirichlet problem where the obstacle has analytic boundary \cite[Theorem 4.20]{MeSa:11}, and 
\item the Helmholtz interior impedance problem where the domain is either analytic ($d=2,3$)  \cite[Theorem 4.10]{MeSa:11},
 \cite[Theorem 4.5]{MePaSa:13},
 or polygonal \cite[Theorem 4.10]{MeSa:11}, \cite[Theorem 3.2]{EsMe:12},
\eit
in all cases under an assumption that the solution operator grows at most polynomially in $k$ (which has recently been shown to hold, for most frequencies, for a variety of scattering problems in \cite{LaSpWu:20}).

These decompositions have had a large impact in the numerical analysis of the Helmholtz equation in that they allow one to prove convergence, explicit in the frequency $k$, of so-called $hp$-finite-element methods ($hp$-FEM) applied to discretisations of the Helmholtz equation. Recall that the $hp$-FEM approximates solutions of PDEs by piecewise polynomials of degree $p$ on a mesh with meshwidth $h$ and obtains convergence by both decreasing $h$ and increasing $p$; this is in contrast to the $h$-FEM where $p$ is fixed and only $h$ decreases.

Indeed, these decompositions were used to prove frequency-explicit convergence of a variety of $hp$ methods in 
 \cite{MeSa:10, MeSa:11,EsMe:12,MePaSa:13, 
 ZhWu:13,
ZhDu:15,
DuZh:16,
BeMe:18}.
These results about $hp$ methods are particularly significant, since they show that, if $h$ and $p$ are chosen appropriately, the FEM solution is uniformly accurate as $k\tendi$ with
the total number of degrees of freedom proportional to $k^d$; i.e., the $hp$-FEM does not suffer from the so-called pollution effect (i.e.~the total number of degrees of freedom needing to be $\gg k^d$) which plagues the $h$-FEM \cite{BaSa:00}.

 These decompositions were also used to prove sharp results about the convergence of $h$ methods with large but fixed $p$
\cite{EsMe:14}, \cite
{DuWu:15}, \cite{LaSpWu:19}.
Furthermore, analogous decompositions and analogous convergence results were obtained for 
$hp$-boundary-element methods
\cite{Me:12}, \cite{LoMe:11}, $hp$ methods applied to Helmholtz problems with arbitrarily-small dissipation \cite{MeSaTo:20} 
and $hp$ methods applied to formulations of the time-harmonic Maxwell equations
\cite{MeSa:20}, \cite{NiTo:20}. This work has also motivated attempts to provide simpler decompositions valid for a variety of variable-coefficient problems  \cite{ChNi:20}.

The recent paper \cite{LaSpWu:22} obtained the analogous decomposition to that in \cite{MeSa:10}  for the Helmholtz problem in $\Rea^d$ but now for the variable-coefficient Helmholtz equation
\beq\label{eq:Helmholtzvar}
\nabla \cdot(A\nabla u) + \frac{k^2}{c^2}u=-f
\eeq
with $A$ and $c \in C^\infty$.  The goal of the present paper is to obtain decompositions for more-general Helmholtz problems.

\subsection{Informal statement of the main results}

We show a decomposition of the form $u = u_{H^2} + u_{\mathcal A}$ for the solutions of  the following three Helmholtz problems.
\begin{itemize}
\item[(P1)] The $C^\infty$-variable-coefficient Helmholtz exterior Dirichlet problem  where the obstacle has analytic boundary and the coefficients are analytic near the obstacle. The corresponding result, discussed in \S \ref{sec:mainintro} below, is  stated as Theorem \ref{thm:Dirichlet}, and applied to prove
quasi-optimality of the $hp$-FEM in Theorem \ref{thm:FEM1}. In particular, Theorem \ref{thm:FEM1} shows that the $hp$-FEM applied to this Helmholtz problem does not suffer from the pollution effect.
\item[(P2)] The transmission problem with finite regularity of the interface and the coefficients - that is, the problem of scattering by a penetrable obstacle. This result is discussed in \S \ref{sec: transmission}, where it is stated as Theorem \ref{thm:transmission}, and applied to prove 
quasi-optimality of the $h$-FEM in Theorem \ref{thm:FEM2}.
\item[(P3)] The $C^\infty$-variable-coefficient Helmholtz equation in the full space $\mathbb R^d$; this situation was studied in \cite{LaSpWu:22} and we recover the results of \cite{LaSpWu:22} with the more general method presented here; see \S \ref{sec:LSW3} and Theorem \ref{thm:LSW3}. In \S\ref{sec:informal} we discuss the ideas behind both \cite{LaSpWu:22} and the present method, and the relationship between them.
\end{itemize}

We highlight that, just as in the earlier works \cite{MeSa:10}, \cite{MeSa:11}, \cite{EsMe:12}, and \cite{MePaSa:13}, $\uhigh$ and $\ulow$ 
correspond to ``high'' and ``low'' frequencies of the solution, respectively -- this is discussed further in the informal discussion in \S\ref{sec:informal}.

The three results stated outlined above are obtained as applications of a single, more general, albeit abstract result, Theorem \ref{thm:mainbb} below. This theorem is  stated using the black-box framework of Sj\"ostrand--Zworski \cite{SjZw:91}, and  covers Helmholtz problems with variable coefficients, impenetrable obstacles, and
penetrable obstacles all at once.   We postpone the rigorous statement of Theorem \ref{thm:mainbb} to \S \ref{sec:mainresult} and give an informal version of it here.

\begin{manualtheorem}{A$^{\prime}$}[Informal statement of our main general result] 
Let $P$ be a 
formally self-adjoint operator with $P = -\Delta$ outside $B(0,R_0)$ (``the black-box''). We assume that $P -k^2$ is well defined and that
\begin{itemize}
    \item[(H1)] the solution operator associated with $P-k^2$ is polynomially bounded:~there exists $M\geq 0$ so that for any $\chi \in C^\infty_{\rm comp}$
    and any compactly-supported $g\in L^2$, the outgoing solution of $(P-k^2)u = g$ satisfies
    $$
    \Vert \chi u \Vert_{L^2} \lesssim  k^M \Vert g\Vert_{L^2},
    $$
    \item[(H2)] one has an estimate quantifying the regularity of $P$ 
    inside $B(0,R_0)$ (i.e., ``inside the black-box'').
\end{itemize}
Then, for any $R>R_0$, any solution of $(P-k^2) u = g$ splits as
$$
u|_{B(0, R)} = u_{H^2} + u_{\mathcal A},
$$
where
\begin{itemize}
    \item[(i)] $u_{H^2}$ satisfies
    $$\Vert u_{H^2}\Vert_{L^2} + k^{-2}\Vert Pu_{H^2}\Vert_{L^2} \lesssim \Vert g \Vert_{L^2},$$
    \item[(ii)] $u_{\mathcal A}$ is \emph{regular}, with an estimate depending on \emph{both} the regularity of the underlying problem (as measured by (H2)) \emph{and} $M$. In addition, the part of $u_{\mathcal A}$ away from ``the black-box'' $B(0,R_0)$ is entire (in the sense of Lemma \ref{lem:analytic}(i) below).
\end{itemize}
\end{manualtheorem}

When $P$ is the Dirichlet Laplacian, for example, $\|Pu_{H^2}\|_{L^2}$ controls $\|u_{H^2}\|_{H^2}$ by elliptic regularity, and thus the bound in (i) is a bound on $\|u_{H^2}\|_{H^2}$ (hence the notation $u_{H^2}$).

The paper \cite{LaSpWu:19} shows that Assumption (H1) holds in the black-box framework for ``most'' frequencies (see Part (ii) of Theorem \ref{thm:polyboundD} for a more precise statement of this).
The key point, therefore, to apply this result to specific situations is to check that an estimate of the type (H2) holds. 
In the three applications  to problems (P1), (P2), and (P3) above, this estimate (H2) corresponds to, respectively, a heat-flow estimate, an elliptic estimate, and regularity of the eigenfunctions of the Laplace operator on the torus. Theorem \ref{thm:mainbb} could be applied to a range of other specific situations, provided an estimate of type (H2) is at hand.
For a reader
interested in applying Theorem  \ref{thm:mainbb} without going into the details of the proof,
\S \ref{sec:howto} gives a short summary on how to do this. 

Before stating the main result applied the problems (P1), (P2), and (P3) above, we record the following lemma about 
how the bound an analytic function depending on $k$ satisfies dictate the $k$-dependence of the region of analyticity; we use this below to understand the properties of the $\ulow$s in (P1) and (P3).

\ble[$k$-explicit analyticity]\label{lem:analytic}
Let $u\in C^\infty(D)$ be a family of functions depending on $k$.

(i) If 
there exist $C, C_u>0$, independent of $\alpha$, such that 
\beqs
\N{\partial^\alpha u}_{L^2(D)}\leq C_u (C k)^{|\alpha|}\quad\text{ for all multiindices } \alpha,
\eeqs
then $u$ is real analytic in $D$ with infinite radius of convergence, i.e., $u$ is entire.

(ii) If 
there exist $C, C_u>0$, independent of $\alpha$, such that 
\beqs
\N{\partial^\alpha u}_{L^2(D)}\leq C_u (Ck)^{|\alpha|} |\alpha|! \quad\text{ for all multiindices } \alpha,
\eeqs
then $u$ is real analytic in $D$ with radius of convergence proportional to $(C k)^{-1}$. 

(iii) If 
there exist $C, C_u>0$, independent of $\alpha$, such that 
\beqs
\N{\partial^\alpha u}_{L^2(D)}\leq C_u C^{|\alpha|}\max \big\{ |\alpha|, k \big\}^{|\alpha|} \quad\text{ for all multiindices } \alpha,
\eeqs
then $u$ is real analytic in $D$ with radius of convergence independent of $k$.

\ele

\bpf
In each case, we use the Sobolev embedding theorem to obtain a bound on $\|\partial^\alpha u \|_{L^\infty(D)}$, and then sum the 
remainder in the truncated Taylor series. For this procedure carried out in Case (iii), see, e.g., \cite[Proof of Lemma C.2]{MeSa:10}; the proofs for the other cases are similar.
\epf


\subsection{The main result applied to the exterior Dirichlet problem}\label{sec:mainintro}

\subsubsection{Background definitions}

\begin{definition}[Exterior Dirichlet problem]\label{def:EDP}
Let $\obstacle_-\subset\Rea^d$, $d\geq 2$ be a bounded open set such that $\partial \obstacle_+$ is smooth, the open complement $\obstacle_+:=\Rea^d\setminus\overline{\obstacle_-}$ is connected, and $\obstacle_- \subset B_{R_0}$.  
Let  $A \in C^{\infty} (\obstacle_+ , \Rea^{d\times d})$ be such that $\supp(I -A)\subset B_{R_1}$,
with $R_1>R_0$, 
$A$ is symmetric, and there exists $A_{\min}>0$ such that
\beq\label{eq:Aelliptic}
\big(A(\bx) \xi\big) \cdot\overline{ \xi} 
\geq  A_{\min} |\xi|^2 \quad \tfa x \in \obstacle_+ \text{ and for all }  \xi \in \Com^d.
\eeq
Let $c\in C^\infty(\obstacle_+)$ be such that $\operatorname{supp}(1-c) \subset B_{R_1}$, and $c_{\rm min}\leq c \leq c_{\rm max}$ with $c_{\rm min}, c_{\rm max} >0$.

Given $f \in L^2(\obstacle_+)$ with $\supp f \Subset \Rea^d$ and $k>0$, $u \in H^1_{\rm loc}(\obstacle_+)$ satisfies the exterior Dirichlet problem if 
\begin{align}\label{eq:HelmholtzD}
c^2\nabla \cdot (A\nabla u) + k^2u&= -f \,\quad\text{ in } \obstacle_+,\\
 u &= 0 \qquad\text{ on }\partial\obstacle_+, \label{eq:Dirtr0}
\end{align}
and $u$ satisfies the Sommerfeld radiation condition \eqref{eq:src}.
\end{definition}

We highlight from Definition \ref{def:EDP} that the obstacle $\obstacle_-$ is contained in $B_{R_0}$, and the variation of the coefficients $A$ and $c$ is contained inside the larger ball $B_{R_1}$.

We use the standard weighted $H^1$ norm, $\|\cdot\|_{H^1_k(B_R\cap\obstacle_+)}$, defined by
\begin{equation} \label{eq:1knorm}
\|u\|^2_{H^1_k(B_R\cap\obstacle_+)} :=\N{\nabla u}_{L^2(B_R\cap\obstacle_+)}^2 + k^2 \N{u}_{L^2(B_R\cap\obstacle_+)}^2.
\eeq

\begin{definition}[$\Csol$]\label{def:Csol}
Given $f\in L^2(\obstacle_+)$ supported in $B_R$ with $R\geq R_1$, let $u$ be the solution of the exterior Dirichlet problem of Definition \ref{def:EDP}.  Given $k_0>0$, let $\Csol= \Csol(k,A,c,R,k_0)>0$ be such that
\beq\label{eq:Csol}
\N{u}_{\HoDk} \leq \Csol \N{f}_{L^2(B_R\cap \obstacle_+)} \quad\tfa k>0.
\eeq
\end{definition}

$\Csol$ exists by standard results about uniqueness of the exterior Dirichlet problem and Fredholm theory; see, e.g., \cite[\S1]{GrPeSp:19} and the references therein.
How $\Csol$ depends on $k$ is crucial to our analysis, and to emphasise this we write $\Csol=\Csol(k)$. 
A key assumption in our analysis is that $\Csol(k)$ is polynomially bounded in $k$ in the following sense.

\begin{definition}[$\Csol$ is polynomially bounded in $k$]\label{def:polybound}
Given $k_0$ and $K \subset [k_0,\infty)$, $\Csol(k)$ is polynomially bounded for $k\in K$ if 
 there exists $C>0$ and $M\geq 0$ such that
\beq\label{eq:polybound}
\Csol(k) \leq C k^M \tfa k \in K,
\eeq
where $C$ and $M$ are independent of $k$ (but depend on $k_0$ and possibly also on $K, A, c, d, R$).
\end{definition}

There exist $C^\infty$ coefficients $A$ and $c$ such that $\Csol(k_j) \geq C_1 \exp (C_2 k_j)$ for $0<k_1<k_2<\ldots$ with $k_j\tendi$ as $j\tendi$, see \cite{Ra:71}, but this exponential growth is the worst-possible, since 
$\Csol(k) \leq c_3 \exp( c_4 k)$ for all $k\geq k_0$ by \cite[Theorem 2]{Bu:98}.
We now recall results on when $\Csol(k)$ is polynomially bounded in $k$.

\begin{theorem}\mythmname{Conditions under which $\Csol(k)$ is polynomially bounded in $k$ for the exterior Dirichlet problem}\label{thm:polyboundD}

(i) If $A$ and $c$ are $C^\infty$ and nontrapping (i.e.~all the trajectories of the generalised bicharacteristic flow defined by the semiclassical principal symbol of \eqref{eq:HelmholtzD} starting in $B_R$ leave $B_R$ after a uniform time), then $\Csol(k)$ is independent of $k$ for 
all sufficiently large $k$; i.e., \eqref{eq:polybound} holds for all $k\geq k_0$ with $M=0$.

(ii) Under no additional assumptions on $\obstacle_-$, $A$, and $c$,
given $k_0>0$ and $\delta>0$ there exists a set $J\subset [k_0,\infty)$ with $|J|\leq \delta$ such that  
\beqs
\Csol(k) \leq C k^{5d/2
+\eps} \quad\tfa k \in [k_0,\infty)\setminus J,
\eeqs
 for any $\eps>0$, where $C$ depends on $\delta, \eps, d, k_0,$ and $A$.
\end{theorem}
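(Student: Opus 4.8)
The plan is to treat the two cases by quite different machinery, both of which are standard in the semiclassical-scattering literature and in our companion work.

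\textit{Part (i), the nontrapping case.} First I would pass to the semiclassical rescaling $\hsc = k^{-1}$, so that \eqref{eq:HelmholtzD} becomes $(P_\hsc - 1) u = \hsc^2 c^2 f$ for the semiclassically-scaled operator $P_\hsc = -\hsc^2 c^2 \nabla\cdot(A\nabla\,\cdot\,)$ (with the Dirichlet condition on $\partial\obstacle_+$). The key input is the nontrapping resolvent estimate: under the hypothesis that all generalised bicharacteristics of the semiclassical principal symbol starting in $B_R$ escape in uniform time, one has the bound $\|\chi (P_\hsc - 1 - \ri 0)^{-1}\chi\|_{L^2\to L^2} \lesssim \hsc^{-1}$ for $\chi \in C^\infty_c$, uniformly for small $\hsc$; this is the classical propagation-of-singularities/Melrose--Sj\"ostrand result (for the Dirichlet obstacle, in the form due to Vainberg and, for the glancing set, Melrose--Sj\"ostrand; see the references collected in the body of the paper). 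Combining this $L^2\to L^2$ bound with elliptic regularity (to upgrade to the $H^1_k$ norm on $B_R\cap\obstacle_+$, losing one power of $\hsc^{-1}$, i.e.\ gaining back one power of $k$) and unwinding the $\hsc^2$ on the right-hand side gives $\|u\|_{H^1_k(B_R\cap\obstacle_+)} \lesssim \|f\|_{L^2}$, i.e.\ $\Csol(k) = O(1)$, hence \eqref{eq:polybound} with $M=0$. The only care needed is the cutoff bookkeeping: since $\supp f \subset B_R$ and $\supp(I-A),\supp(1-c)\subset B_{R_1}\subset B_R$, one inserts a cutoff $\chi$ equal to one on $B_R$ and handles the exterior region $\{|\bx|>R\}$, where the equation is the free Helmholtz equation, by the explicit free resolvent / Rellich-type a priori bound.

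\textit{Part (ii), the rough-coefficient case for most frequencies.} Here the nontrapping estimate is unavailable (and in general false, by the Ralston trapping example quoted above), so instead I would invoke the quantitative result of \cite{LaSpWu:20}: for Lipschitz $A$ and $L^\infty$ $c$ (more generally in the black-box framework), given $k_0,\delta>0$ there is a ``bad set'' $J\subset[k_0,\infty)$ of measure $\le\delta$ outside of which $\Csol(k)\lesssim k^{5d/2+1+\eps}$, with the improvement to $5d/2+\eps$ when $A\in C^{1,\sigma}$. The mechanism there is: (a) Burq's exponential bound $\Csol(k)\le c_3 e^{c_4 k}$ holds for \emph{all} $k$; (b) the resolvent, cut off, extends to a holomorphic family in a complex neighbourhood of the real axis, and one applies a resolvent-estimate-from-below / measure argument (in the spirit of the elliptic-regularity-plus-polynomial-interpolation lemmas of the semiclassical literature, cf.\ the arguments going back to the work surveyed in \cite{LaSpWu:20}) to deduce polynomial bounds away from a small set; the exponent $5d/2$ tracks the power lost in the Carleman/resolvent estimates for Lipschitz coefficients, with the regularity improvement coming from a sharper such estimate when $A\in C^{1,\sigma}$. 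In practice, for the purposes of this paper, Part~(ii) \emph{is} \cite[Theorem~\ldots]{LaSpWu:20} specialised to the exterior Dirichlet setting, so the ``proof'' is a citation together with the observation that the Definition~\ref{def:EDP} hypotheses are a special case of the black-box hypotheses used there.

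\textit{Main obstacle.} For Part~(i) the only subtlety is cleanly combining the interior propagation estimate with the exterior free-resolvent bound across the cutoff, and upgrading from $L^2\to L^2$ to the weighted $H^1_k$ norm without losing more than one power of $k$; this is routine but must be done carefully with the Lipschitz (not smooth) coefficient $A$, so one uses the $C^{0,1}$ elliptic estimate rather than full pseudodifferential machinery near $\partial\obstacle_+$. For Part~(ii) there is essentially no new work: the real content is entirely in the cited \cite{LaSpWu:20}, and the ``hard part'' has already been done there — what remains here is only to check that the present hypotheses on $(\obstacle_-, A, c)$ fall within that paper's framework, which is immediate from the support and ellipticity conditions in Definition~\ref{def:EDP}.
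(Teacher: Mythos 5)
Your plan matches the paper's proof, which is itself a short list of citations to the resolvent-estimate literature, and the overall logic (semiclassical rescaling plus nontrapping resolvent estimate for Part (i); invocation of \cite{LaSpWu:20} for Part (ii)) is the correct one. Two small corrections are worth making. First, the hypotheses of Part (i) are that $A$ and $c$ are $C^\infty$, not Lipschitz; the remark in your final paragraph about handling the ``Lipschitz (not smooth) coefficient $A$'' in Part (i) misreads the statement (Lipschitz $A$ is the Part (ii) hypothesis). In the smooth nontrapping case the Melrose--Sj\"ostrand/Vainberg/Lax--Phillips/Burq results cited in the paper apply directly, and the paper additionally cites \cite{GaSpWu:20} for the sharper fact that $\Csol$ is proportional to the length of the longest trajectory in $B_R$.

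Second, for Part (ii) you claim it is ``immediate'' that the present hypotheses fall within the framework of \cite{LaSpWu:20}, but the paper is more careful: that reference proves the most-frequency polynomial bound only for $c=1$. The extension to $c\in L^\infty$ as in Definition~\ref{def:EDP} is exactly the content of Lemma~\ref{lem:obstacle}, which shows that the general-$c$ exterior Dirichlet problem still defines a semiclassical black-box operator by working in the $c^{-2}$-weighted $L^2$ space, and, crucially, re-establishes the Weyl-type eigenvalue counting bound \eqref{eq:gro} for the corresponding reference operator by a counting-function comparison with the $c=1$ case. That is a genuine, if short, additional step and should not be elided in the proof.
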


\bpf[References for the proof]
(i) follows from \emph{either} the results of \cite{MeSj:82} combined with either \cite[Theorem 3]{Va:75}/ \cite[Chapter 10, Theorem 2]{Va:89} or \cite{LaxPhi}, \emph{or} \cite[Theorem 1.3 and \S3]{Bu:02}. It has recently been proved that, for this situation, $\Csol$ is proportional to the length of the longest trajectory in $B_R$; see \cite[Theorems 1 and 2, and Equation 6.32]{GaSpWu:20}. 
(ii) is proved for $c=1$ in \cite[Theorem 1.1 and Corollary 3.6]{LaSpWu:20}; the proof for more-general $c$ follows from Lemma \ref{lem:obstacle} below.
\epf

\subsubsection{Theorem \ref{thm:mainbb} applied to the exterior Dirichlet problem}

\begin{manualtheorem}{B} \mythmname{Theorem \ref{thm:mainbb} applied to the exterior Dirichlet problem with analytic $\obstacle_-$
and locally analytic $A, c$}\label{thm:Dirichlet}
Suppose that $\obstacle_-, A, c,R_0$, and $R_1$ are as in Definition \ref{def:EDP}.
In addition, assume that $\obstacle_-$ is analytic, and that $A$ and $c$ are analytic in $B_{R_*}$ for some $R_0<R_*<R_1$.

If $\Csol(k)$ is polynomially bounded for $k\in K$ (in the sense of Definition \ref{def:polybound}), then 
given $f\in L^2(\obstacle_+)$ supported in $B_{R}$ with $R\geq R_1$, the solution $u$ of the exterior Dirichlet problem is such that
there exists $u_{\mathcal A} \in C^\infty(B_R \cap \obstacle_+)$, and $u_{H^2} \in H^2(B_R \cap\obstacle_+)$, both with zero Dirichlet trace on $\partial\obstacle_+$, such that 
\beqs
u|_{B_{R}} = u_{\mathcal A} + u_{H^2}.
\eeqs
Furthermore, there exists $C_1$, independent of $k$ and $\alpha$, such that 
\begin{equation} \label{eq:decHFDir}
\Vert \partial^\alpha u_{H^2} \Vert_{L^2(B_R \cap \obstacle_+)} \leq C_1 k^{|\alpha|-2}\Vert f \Vert_{L^2(B_R \cap \obstacle_+)} \quad \tfa  k \in K \text{ and for all } |\alpha|\leq 2,
\end{equation}
and there exist $C_2,C_3, C_4$ and $C_5$, all independent of $k$ and $\alpha$, and 
$\RfarA, \RfarB, \RlocB, \RlocA$ with $R_0<\RfarA<\RfarB<\RlocB<\RlocA<R$
such that $u_{\mathcal A}$ decomposes as $u_{\mathcal A} = u^{R_0}_{\mathcal A} + u^{\infty}_{\mathcal A}$, 
where $ u^{R_0}_{\mathcal A} $ is analytic in  $B_{\RlocA}$ and has zero Dirichlet trace on $\partial \obstacle_+$, and $u^{\infty}_{\mathcal A}$ is analytic in $(B_{\RfarA})^c$ with, for all $k\in K$ and all $\alpha$,  
\begin{equation} \label{eq:decLFDir1}
\Vert \partial^\alpha u^{R_0}_{\mathcal A} \Vert_{L^2(B_{\RlocA}\cap \obstacle_+) } \leq C_2  (C_3)^{|\alpha|}
\max \big\{ |\alpha|^{|\alpha|}, k^{|\alpha|}\big\} k ^{ -1+M} \,\Vert f \Vert_{L^2(B_R \cap \obstacle_+)},
\end{equation}
\begin{equation} \label{eq:decLFDir2}
\Vert \partial^\alpha u^{\infty}_{\mathcal A} \Vert_{L^2((B_{\RfarA})^c\cap \obstacle_+) } \leq C_4 (C_5)^{|\alpha|} k ^{ |\alpha| -1+M} \,\Vert f \Vert_{L^2(B_R \cap \obstacle_+)}, 
\end{equation}
and, for any $N,m>0$ there exists $C_{N,m}>0$ so that
\begin{equation} \label{eq:decLFDirRes}
\Vert u^{\infty}_{\mathcal A} \Vert_{H^{m}(B_{\RfarB}\cap \obstacle_+) } + \Vert u^{R_0}_{\mathcal A} \Vert_{H^{m}((B_{\RlocB})^c\cap \obstacle_+) }  \leq C_{N,m}k^{-N}  \,\Vert f \Vert_{L^2(B_R \cap \obstacle_+)}    \tfa  k \in K.
\end{equation}
\end{manualtheorem}

By Parts (iv) and (i) of Lemma \ref{lem:analytic}, $u^{R_0}_{\mathcal A}$ is analytic in $B_{\RlocA}$ with $k$-independent radius of convergence, and $u^{\infty}_{\mathcal A}$ is entire in $(B_{\RfarA})^c$; see Figure \ref{fig:line2}.

\begin{figure}
\begin{center}
    \includegraphics[scale=0.8]{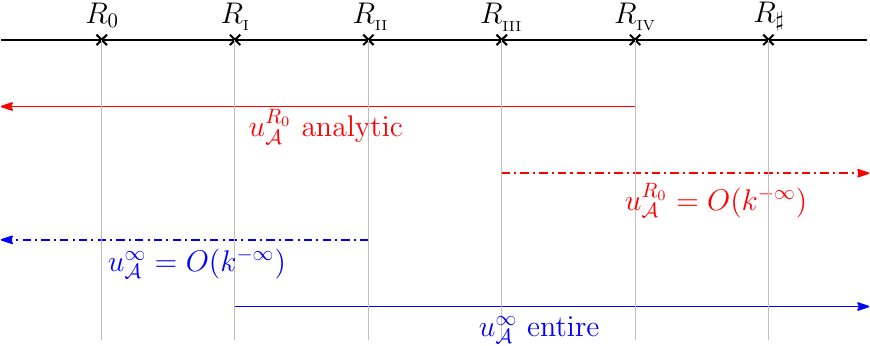}
  \end{center} 
      \caption{The regions where $u^{R_0}_{\mathcal A}$ and $u^\infty_{\mathcal A}$ appearing in Theorem \ref{thm:Dirichlet} are analytic, entire, or $O(k^{-\infty})$.} 
\label{fig:line2}
\end{figure}

\subsubsection{Corollary about frequency-explicit convergence of the $hp$-FEM}

As discussed in \S\ref{sec:1.1}, Theorem \ref{thm:Dirichlet} implies a frequency-explicit convergence result about the $hp$-FEM applied to the exterior Dirichlet problem; we now give the necessary definitions to state this result. Recall that the FEM is based on the standard variational formulation of the exterior Dirichlet problem: 
Let 
\beqs
H^1_{0,\partial \obstacle_+}(B_R\cap \obstacle_+):= \Big\{ v \in H^1(B_R\cap \obstacle_+) \text{ with } v=0 \text{ on } \partial \obstacle_+\Big\}.
\eeqs
Given $R\geq R_1$ and $F\in (H^1_{0,\partial \obstacle_+}(B_R\cap\obstacle_+))^*$, 
\beq\label{eq:vf}
\text{ find } u \in H^1_{0,\partial \obstacle_+}(B_R\cap\obstacle_+) \tst \,\, a(u,v)=F(v) \,\, \tfa v\in H^1_{0,\partial \obstacle_+}(B_R\cap\obstacle_+),
\eeq
where
\begin{align}\label{eq:sesqui}
a(u,v)&:= \int_{B_R\cap\obstacle_+} 
\left((A \nabla u)\cdot\overline{\nabla v}
 - \frac{k^2}{c^2} u\overline{v}\right) - \big\langle \DtN (u), v\big\rangle_{\GR},
 \end{align}
where $\langle\cdot,\cdot\rangle_{\GR}$ denotes the duality pairing on $\GR$ that is linear in the first argument and antilinear in the second, and 
$\DtN: H^{1/2}(\truncbound) \rightarrow H^{-1/2}(\truncbound)$ is the Dirichlet-to-Neumann map for the equation $\Delta u+k^2 u=0$ posed in the exterior of $B_R$ with the Sommerfeld radiation condition \eqref{eq:src}; the definition of $\DtN$ in terms of Hankel functions and polar coordinates (when $d=2$)/spherical polar coordinates (when $d=3$) is given in, e.g., \cite[Equations 3.7 and 3.10]{MeSa:10}. 
We use later the fact that there exist $\CDTN=\CDTN(k_0 R_0)$ such that 
\beq\label{eq:CDTN1}
\big|\big\langle \DtN(u), v\rangle_{\partial B_R}\big\rangle\big| \leq \CDTN \N{u}_{H^1_k(B_R\cap \obstacle_+)}  \N{v}_{H^1_k(B_R\cap \obstacle_+)} 
\eeq
for all $u,v \in H^1_{0,\partial \obstacle_+}(B_R\cap \obstacle_+)$ and for all $k\geq k_0$; see \cite[Lemma 3.3]{MeSa:10}. 

If $F(v)= \int_{B_R \cap \obstacle_+} f v$, then the solution of the variational problem \eqref{eq:vf} is the restriction to $B_R$ of the solution of the exterior Dirichlet problem of Definition \ref{def:EDP}. If
\beq\label{eq:planewavedata}
F(v) = \int_{\partial B_R} \big(\partial_n u^I - \DtN (u^I)\big) \overline{v},
\eeq
 where $u^I$ is a solution of $\Delta u^I +k^2 u^I=0$ in $B_R\cap\obstacle_+$, then the solution of the variational problem \eqref{eq:vf} is the restriction to $B_R\cap\obstacle_+$ of the sound-soft scattering problem (see, e.g, \cite[Page 107]{ChGrLaSp:12}).

Given a sequence, $\{V_N\}_{N=0}^\infty$, of finite-dimensional subspaces of $H^1_{0,\partial \obstacle_+}(B_R\cap \obstacle_+)$, the finite-element method for the variational problem \eqref{eq:vf} is the Galerkin method applied to the variational problem \eqref{eq:vf}, i.e.,
\beq\label{eq:FEM}
\text{ find } u_N \in V_N \tst\,\, a(u_N,v_N)=F(v_N) \,\, \tfa v_N\in  V_N.
\eeq
\begin{manualtheorem}{B1}[Quasioptimality of $hp$-FEM for the exterior Dirichlet problem]\label{thm:FEM1}
Let $d=2$ or $3$. Suppose that $\obstacle_-, A, c,R, \RfarA$, and $\RlocA$ are as in Theorem \ref{thm:Dirichlet}.
Let $(V_N)_{N=0}^\infty$ be the piecewise-polynomial approximation spaces described in \cite[\S5]{MeSa:10}, \cite[\S5.1.1]{MeSa:11} (where, in particular, the triangulations are quasi-uniform, allow curved elements, and thus fit $B_R\cap \obstacle_+$ exactly).
Let $u_N$ be the Galerkin solution defined by \eqref{eq:FEM}.

If $\Csol(k)$ is polynomially bounded (in the sense of Definition \ref{def:polybound}) for $k\in K\subset [k_0,\infty)$ then there exist $k_1, C_1,C_2>0$, depending on $A, c, R$, and $d$, but independent of $k$, $h$, and $p$, such that if 
\beq\label{eq:thresholdD}
\frac{hk}{p} \leq C_1 \quad\tand\quad p \geq C_2 \log k, 
\eeq 
then, for all $k\in K\cap[k_1,\infty)$, the Galerkin solution exists, is unique, and satisfies the quasioptimal error bound 
\beq\label{eq:qo}
\N{u-u_N}_{H^1_k(B_R\cap\obstacle_+)}\leq \Cqo \min_{v_N\in V_N} \N{u-v_N}_{H^1_k(B_R\cap\obstacle_+)},
\eeq
with 
\beq\label{eq:Cqo}
\Cqo:= \frac{2\big(\max\{ A_{\max}, c_{\min}^{-2} \} + \CDTN\big)}{A_{\min}}.
\eeq
\end{manualtheorem}

\bre\mythmname{The significance of Theorem \ref{thm:FEM1}:~the $hp$-FEM does not suffer from the pollution effect}\label{rem:FEMD}
For finite-dimensional subspaces consisting of piecewise polynomials of degree $p$ on meshes with meshwidth $h$, 
the total number of degrees of freedom $\sim (p/h)^d$. 
Therefore Theorem \ref{thm:FEM1}, as well as the results in 
\cite{MeSa:10}, \cite{MeSa:11}, \cite{EsMe:12}, \cite{MePaSa:13}, \cite{LaSpWu:22}, 
show that there is a choice of $h$ and $p$ such that the $hp$-FEM is quasioptimal with the total number of degrees of freedom $\sim k^d$. As highlighted in \S\ref{sec:1.1}, the significance of this is that 
when the total number of degrees of freedom $\sim k^d$ the $h$-FEM (i.e., with $p$ fixed) 
does not satisfy the quasioptimal error estimate \eqref{eq:qo} with $\Cqo$ independent of $k$; this is called the \emph{pollution effect} -- see \cite{BaSa:00} and the references therein.

The results in \cite{MeSa:10}, \cite{MeSa:11}, \cite{EsMe:12}, \cite{MePaSa:13} are for constant-coefficient Helmholtz problems, and 
those in \cite{LaSpWu:22} are for the Helmholtz equation with smooth variable coefficients and no obstacle.
Theorem \ref{thm:FEM1} is therefore 
the first result showing that the 
$hp$-FEM applied the Helmholtz exterior Dirichlet problem with variable coefficients does not suffer from the pollution effect.
\ere

In the specific case of the plane-wave scattering problem, the recent results of \cite[Theorem 9.1 and Remark 9.10]{LaSpWu:19} allow us to bound the best approximation error on the right-hand side of \eqref{eq:qo} and obtain a bound on the relative error. 

\begin{corollary}[Bound on the relative error of the Galerkin solution]
\label{cor:1}
Let the assumptions of Theorem \ref{thm:FEM1} hold and, furthermore, let $F(v)$ be given by \eqref{eq:planewavedata} 
with $u^I(x) = \exp(\ri k x \cdot a)$ for some $a\in \Rea^d$ with $|a|=1$ (so that $u$ is then the solution of the plane-wave scattering problem). 
Suppose $\Csol(k)$ is polynomially bounded (in the sense of Definition \ref{def:polybound}) for $k\in K\subset [k_0,\infty)$. Then 
there exists $C_3>0$, independent of $k$, $h$, and $p$, such that, with $k_1, C_1$, and $C_2$ as in Theorem \ref{thm:FEM1}, 
if 
$hk/p\leq C_1$ and $p\geq C_2\log k$ then, 
for all $k\in K\cap[k_1,\infty)$, 
\beq\label{eq:rel_error}
\frac{
\N{u-u_N}_{H^1_k(B_R\cap \obstacle_+)}
}{
\N{u}_{H^1_k(B_R\cap \obstacle_+)}
}
\leq \Cqo C_3
\frac{hk}{p} \left(1+ \frac{hk}{p}\right),
\eeq
with $\Cqo$ given by \eqref{eq:Cqo}; i.e.~the relative error can be made arbitrarily small by making $hk/p$ smaller. 
\end{corollary}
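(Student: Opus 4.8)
The plan is to combine the quasi-optimality bound \eqref{eq:qo} from Theorem \ref{thm:FEM1} with a bound on the best-approximation error for the plane-wave scattering problem. Since the error bound \eqref{eq:qo} holds once \eqref{eq:thresholdD} is satisfied, the task reduces to estimating $\min_{v_N \in V_N} \N{u - v_N}_{H^1_k(B_R\cap\obstacle_+)}$ and then dividing through by $\N{u}_{H^1_k(B_R\cap\obstacle_+)}$. The key external input is \cite[Theorem 9.1 and Remark 9.10]{LaSpWu:19}, which (for the analytic obstacle, using a decomposition of $u$ of the form supplied by Theorem \ref{thm:Dirichlet}) bounds the best-approximation error of the $hp$ spaces $(V_N)$ relative to $\N{u}_{H^1_k}$ in terms of $hk/p$ and $\log k$; schematically one expects a bound of the form
\begin{equation*}
\frac{\min_{v_N\in V_N}\N{u-v_N}_{H^1_k(B_R\cap\obstacle_+)}}{\N{u}_{H^1_k(B_R\cap\obstacle_+)}} \leq C_6 \left(\Big(\frac{hk}{p}\Big)^{\!p} + \frac{hk}{p} + k\Big(\frac{hk}{\sigma_0 p}\Big)^{\!p}\right)
\end{equation*}
for appropriate constants, the three terms coming respectively from the $H^2$-part, the analytic part, and the exponentially small tail. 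Under \eqref{eq:thresholdD}, i.e. $hk \leq C_4$ and $p \geq C_5\log k$, the first and third terms are $O(k^{-C_5\log(1/\cdot)})$-type quantities that are dominated by the middle term $hk/p$, and $hk/p \leq C_4/(C_5\log k)$.

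Concretely, the steps I would carry out are: (1) invoke Theorem \ref{thm:FEM1} to get existence, uniqueness and \eqref{eq:qo}, noting that the hypotheses of Theorem \ref{thm:FEM1} are exactly those assumed here together with \eqref{eq:thresholdD}; (2) quote \cite[Theorem 9.1 and Remark 9.10]{LaSpWu:19} to obtain the best-approximation estimate above, checking that its hypotheses (analytic obstacle and coefficients, polynomial boundedness of $\Csol$, the $hp$ mesh assumptions) are met — this is where the decomposition \eqref{eq:maindecDir}--\eqref{eq:decLFDir} of Theorem \ref{thm:Dirichlet} is used as the regularity input; (3) substitute the threshold conditions \eqref{eq:thresholdD} to show the exponentially-decaying terms are dominated by $hk/p$ (using that $p \geq C_5\log k$ forces $(hk/p)^p \lesssim (C_4/(C_5\log k))^{C_5\log k}$, which for $k \geq k_0$ is $\leq$ const $\cdot\, hk/p$, possibly after enlarging $C_6$ or shrinking $C_4$); and (4) combine with \eqref{eq:qo} and collect constants to reach \eqref{eq:rel_error}, where the factor $(1 + C_4/(C_5\log k))$ absorbs the quadratic-in-$hk/p$ contribution.

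The main obstacle I anticipate is step (2)–(3): being careful that the best-approximation bound from \cite{LaSpWu:19} is stated for a decomposition with precisely the $k$-dependence appearing in \eqref{eq:decHFDir}–\eqref{eq:decLFDir} (note the extra $k^M$ factor in \eqref{eq:decLFDir} coming from polynomial boundedness, which must be shown not to spoil the estimate once $p \gtrsim \log k$ with a large enough implied constant), and that all the exponentially small terms really are controlled uniformly in $k \in K$ by $hk/p$ under \eqref{eq:thresholdD}. The rest — plugging into \eqref{eq:qo}, bookkeeping of the constants $\Cqo, C_4, C_5, C_6$, and writing the final quotient — is routine. One should also note that \eqref{eq:rel_error} only makes the relative error small when $C_4$ is chosen small (for fixed $C_5$), which is consistent with the statement and requires no separate argument beyond reading off the bound.
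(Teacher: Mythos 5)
Your outline takes a different and considerably more complicated route than the paper, and contains a substantive misreading of the key reference. The paper's proof of Corollary~\ref{cor:1} does not invoke the decomposition of Theorem~\ref{thm:Dirichlet} at all, and \cite[Theorem 9.1 and Remark 9.10]{LaSpWu:19} is not a best-approximation result: it is the simple ``oscillation'' bound, stated in the paper as \eqref{eq:Cosc},
\begin{equation*}
|u|_{H^2(B_R)} \leq \Cosc\, k\, \N{u}_{H^1_k(B_R)},
\end{equation*}
valid for the plane-wave scattering solution $u$ itself. The paper then feeds this directly into the single polynomial-approximation estimate \eqref{eq:bae1} applied to the full solution $u$ (no splitting into $u_{H^2}+u_{\mathcal A}$), giving \eqref{eq:MSbae}
\begin{equation*}
\min_{v_N\in V_N}\N{u-v_N}_{H^1_k(B_R)} \leq \mathcal{C}_3\Cosc\,\frac{hk}{p}\left(1+\frac{hk}{p}\right)\N{u}_{H^1_k(B_R)},
\end{equation*}
and \eqref{eq:rel_error} follows by combining with the quasi-optimality bound \eqref{eq:qo}, dividing by $\N{u}_{H^1_k}$, and plugging in $hk/p\leq C_4/(C_5\log k)$ from \eqref{eq:thresholdD}, with $C_6=\mathcal{C}_3\Cosc$. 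There are no exponentially small terms to control, no $k^M$ factor, and no step (3) of the kind you anticipate.

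The gap in your approach is twofold. First, Theorem \ref{thm:Dirichlet}'s decomposition is stated for a compactly supported $L^2$ source $f$, whereas the plane-wave scattering problem has data given by the boundary term \eqref{eq:planewavedata}; the decomposition estimates \eqref{eq:decHFDir}--\eqref{eq:decLFDir} are in terms of $\|f\|_{L^2}$ and do not apply directly. Second, even granting a version of the decomposition, one would still need to relate the right-hand side to $\N{u}_{H^1_k}$ in order to obtain a \emph{relative}-error bound; the mechanism for doing so is precisely the oscillation bound \eqref{eq:Cosc}, which renders the decomposition superfluous here. Your proposal thus essentially re-runs the argument of Lemma \ref{lem:MS} (which is for the adjoint solution $\cS^* f$, used inside the proof of Theorem \ref{thm:FEM1}), whereas the Corollary takes Theorem~\ref{thm:FEM1} as a black box and uses a separate, much more elementary approximation estimate for the primal solution.
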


\subsection{The main result applied to the  transmission problem} \label{sec: transmission}

\subsubsection{Background definitions}

\begin{definition}[Transmission problem (i.e.~scattering by a penetrable obstacle)]\label{def:transmission}
Let $\obstacle_-\subset\Rea^d$, $d\geq 2$ be a bounded Lipschitz open set such that the open complement $\obstacle_+:=\Rea^d\setminus\overline{\obstacle_-}$ is connected and such that $\obstacle_- \subset B_{R_0}$.  
Let 
$A=(A_-,A_+)$ with $A_\pm \in C^{0,1} (\obstacle_\pm, \Rea^{n\times n})$ be such that $\supp(I -A)\subset B_{R_0}$, $A$ is symmetric, and there exists $A_{\min}>0$ such that
\eqref{eq:Aelliptic} holds (with $\obstacle_+$ replaced by $\Rea^d$).
Let $c\in L^\infty(\obstacle_-)$ be such that 
$c_{\rm min}\leq c \leq c_{\rm max}$ with $0<c_{\rm min}\leq c_{\rm max}<\infty$.
Let $\beta>0$.

Let $\nu$ be the unit normal vector field on $\partial \obstacle_-$  pointing from $\obstacle_-$ into $\obstacle_+$, and let 
$\partial_{\nu,A}$ denote the corresponding conormal derivative
defined by, e.g., \cite[Lemma 4.3]{Mc:00} (recall that this is such that, when $v \in H^2(\obstacle_{+})$, $\partial_{\nu, A}v = \nu \cdot \gamma (A\nabla v)$).

Given $f \in L^2(\obstacle_+)$ with $\supp f \Subset \Rea^d$ and $k>0$, $u=(u_-,u_+) \in H^1_{\rm loc}(\Rea^d)$ satisfies 
the transmission problem if 
\begin{align}\nonumber
c^{2} \nabla \cdot (A_-\nabla u_-) + k^2u_-= -f  &\quad\text{ in } \obstacle_-, \\ \nonumber
\nabla \cdot (A_+\nabla u_+) + k^2u_+= -f &\quad\text{ in } \obstacle_+, \\
u_-= u_+, \quad  \partial_{\nu,A_-} u_-= \beta\partial_{\nu,A_+}  u_+  &\quad\text{ on }\partial\obstacle_-, \label{eq:trcont} 
\end{align}
and $u_+$ satisfies the Sommerfeld radiation condition \eqref{eq:src}.
\end{definition}

When $A_-, A_+,$ and $c$ are constant, two of the four parameters $A_-,A_+,c$, and $\beta$ are redundant. For example, by rescaling $u_-,u_+$, and $f$, all such transmission problems can be described by the parameters $c$ and $\beta$ (with $A_-=A_+=1$), as in, e.g., \cite{CaPoVo:99}, or by the parameters $A_-$ and $c$ (with $A_+=\beta=1$); see, e.g., the discussion and examples after \cite[Definition 2.3]{MoSp:19}.

The definition of $\Csol$ for the transmission problem is almost identical to Definition \ref{def:Csol}, except that
the norms in \eqref{eq:Csol} are now over $B_R$ (as opposed to $B_R\cap \obstacle_+$) and 
 now $\Csol$ depends additionally on $\beta$ 

\begin{theorem}\mythmname{Conditions under which $\Csol(k)$ is polynomially bounded in $k$ for the transmission problem}\label{thm:polyboundT}
In each of the following conditions we assume that $\obstacle_-$, $A$, and $c$ are as in Definition \ref{def:transmission}.

(i) If $\obstacle_-$ is smooth and strictly convex with strictly positive curvature, $A=I$, $c$ is a constant $\geq 1$, and $\beta>0$, 
then $\Csol(k)$ is independent of $k$ for all sufficiently large $k$; i.e., \eqref{eq:Csol} holds for all $k\geq k_0$ with $M=0$

(ii) If $\obstacle_-$ is Lipschitz and star-shaped, $A=I$, and $c$ is a constant with 
\beqs
\frac{1}{c^2}\leq \frac{1}{\beta}\leq 1
\eeqs
then $\Csol(k)$ is independent of $k$ for all sufficiently large $k$.

(iii) 
If $\obstacle_-$ is star-shaped, $\beta=1$, and both $A$ and $c$ are monotonically non-increasing in the radial direction (in the sense of \cite[Condition 2.6]{GrPeSp:19})
then $\Csol(k)$ is independent of $k$ for all sufficiently large $k$.

(iv) 
Under no additional assumptions on $\obstacle_-$, $A$, and $c$, given $k_0>0$ and $\delta>0$ there exists a set $J\subset [k_0,\infty)$ with $|J|\leq \delta$ such that  
\beqs
\Csol(k) \leq C k^{5d/2+1+\eps} \quad\tfa k \in [k_0,\infty)\setminus J,
\eeqs
 for any $\eps>0$, where $C$ depends on $\delta, \eps, d, k_0, A, c$, and $\beta$.
\end{theorem}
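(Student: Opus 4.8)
Following the pattern of Theorem~\ref{thm:polyboundD}, I would prove the four parts by assembling known results --- a nontrapping cutoff-resolvent bound for (i), Morawetz--Rellich multiplier estimates for (ii) and (iii), and the ``most frequencies'' bound of \cite{LaSpWu:20} for (iv); in each of (i)--(iii) the conclusion is in fact $\Csol(k)=O(1)$, i.e.\ \eqref{eq:Csol} with $M=0$. For (i), the content is that the transmission dynamics is nontrapping when $c\le1$: since $A=I$ near the smooth strictly convex interface $\partial\obstacle_-$, conservation of tangential momentum (Snell's law) forces any ray transmitted from $\obstacle_+$ into $\obstacle_-$ to enter at an angle $\theta_-$ with $\sin\theta_-=c\sin\theta_+\le c<1$, hence bounded away from grazing, so it crosses $\obstacle_-$ transversally and leaves in uniform time, after which strict convexity pushes it --- and all its reflected branches --- out of $B_R$ ($\beta$ entering only through reflection/transmission amplitudes, not this geometry). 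With nontrapping established, the $O(1)$ cutoff-resolvent estimate for convex transmission problems of \cite{CaPoVo:99} (see also \cite{MoSp:19}) gives $\Csol(k)=O(1)$. The delicate point, which I would quote rather than reprove, is the description of the generalised bicharacteristics of the transmission problem at glancing rays, together with the verification that the geometric hypothesis they require follows from $c\le1$.

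For (ii) and (iii) the tool is a Morawetz--Rellich identity: test the interior equation against $\overline{x\cdot\nabla u_-+\tfrac{d-1}{2}u_-}$ and the exterior equation on $B_R\cap\obstacle_+$ against $\overline{x\cdot\nabla u_++\tfrac{d-1}{2}u_+}$, integrate by parts, and add. The $\partial B_R$ terms are handled using the sign and continuity of $\DtN$ recorded around \eqref{eq:CDTN1} (the standard Morawetz identity on the truncated domain); the interface terms on $\partial\obstacle_-$ carry the weight $x\cdot\nu\ge0$ from star-shapedness and, after substituting the transmission conditions $\gamma u_-=\gamma u_+$ and $\partial_{\nu,A_-}u_-=\beta\partial_{\nu,A_+}u_+$, reduce to a quadratic form in $(\partial_\nu u_+,\nabla_{\partial\obstacle_-}\gamma u,\gamma u)$ which has the favourable sign exactly when $c^2\le\beta\le1$; this is the computation of \cite{MoSp:19}. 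For (iii), $\beta=1$ removes the interface jump, and the extra volume terms the multiplier produces from the now-variable $A$ and $c$ --- involving $x\cdot\nabla A$ and $x\cdot\nabla(c^{-2})$ --- have the right sign precisely under the radial monotonicity of \cite[Condition~2.6]{GrPeSp:19}, so the argument becomes that of \cite{GrPeSp:19}. In both cases one obtains $\N{u}_{H^1_k(B_R)}\lesssim\N{f}_{L^2(B_R)}$ uniformly in $k\ge k_0$; the main technical burden is the sign bookkeeping at the merely Lipschitz interface together with the density and trace arguments justifying the integration by parts there, both of which are already carried out in the cited papers.

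For (iv), I would first check that the transmission problem of Definition~\ref{def:transmission} fits the black-box framework of \S\ref{sec:blackbox} --- the natural operator is self-adjoint on the appropriate weighted $L^2$ space and satisfies the axioms of Definition~\ref{def:bb}, so its scattering resolvent continues meromorphically with the usual polynomial resonance count --- and then invoke \cite{LaSpWu:20}: combining the a priori (at worst exponential, of Burq type \cite{Bu:98}) global resolvent bound with that resonance count, their measure-theoretic argument produces, for any $\delta>0$, a set $J\subset[k_0,\infty)$ with $|J|\le\delta$ such that $\Csol(k)\le Ck^{5d/2+1+\eps}$ for all $k\in[k_0,\infty)\setminus J$. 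The only genuinely new step here is the black-box embedding; the rest is citation. Across the whole theorem I expect the glancing-ray analysis behind (i) to be the hardest point, which is why --- exactly as for the corresponding part of Theorem~\ref{thm:polyboundD} --- it is best handled by reference to the literature.
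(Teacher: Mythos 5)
Your proposal matches the paper's own proof, which is literally a list of citations: (i) to \cite[Theorem 1.1]{CaPoVo:99}, (ii) to \cite[Theorem 3.1]{MoSp:19}, (iii) to \cite[Theorem 2.7]{GrPeSp:19}, and (iv) to \cite[Theorem 1.1 and Corollary 3.6]{LaSpWu:20} together with the observation that the transmission problem fits the black-box framework (Lemma \ref{lem:transmission}), and you have identified exactly these references and the mechanisms behind each. One small imprecision in your informal sketch for (i): strict convexity does not push the internally reflected branches out of $\obstacle_-$ --- in a strictly convex obstacle a ray that enters non-tangentially bounces internally at the same angle of incidence forever; what $c\le 1$ buys is that this angle stays below the critical angle of total internal reflection, so a positive fraction of the energy leaks at every bounce, and the delicate work in \cite{CaPoVo:99} is to turn this leaky trapping (plus control at glancing) into a bounded cutoff resolvent. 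Since you explicitly defer that analysis to the cited paper, the imprecision is harmless, and the proposal is in essential agreement with the paper.
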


\bpf[References for the proof]
(i) is proved in \cite[Theorem 1.1]{CaPoVo:99} (we note that, in fact, a stronger result with $A_-$ variable is also proved there).
(ii) is proved in \cite[Theorem 3.1]{MoSp:19}.
(iii) is proved in \cite[Theorem 2.7]{GrPeSp:19}.
(iv) is proved for constant $c$ and globally Lipschitz $A$ in \cite[Theorem 1.1 and Corollary 3.6]{LaSpWu:20};
the proof for these more-general $c$ and $A$ follows from Lemma \ref{lem:obstacle} below.
\epf

\subsubsection{Theorem \ref{thm:mainbb} applied to the transmission problem}
\begin{manualtheorem}{C} \mythmname{Theorem \ref{thm:mainbb} applied to the transmission problem}\label{thm:transmission}
Suppose that $\obstacle_-$, $A, c,$ and $\beta,$ are as in Definition \ref{def:transmission} and, additionally, $A$ and $c$ are $C^{2m-2,1}$ and $\obstacle_-$ is $C^{2m-1,1}$ for some integer $m \geq 1$.

If $\Csol(k)$ is polynomially bounded for $k\in K$ (in the sense of Definition \ref{def:polybound}), then 
given $f\in L^2(\Rea^d)$ supported in $B_{R}$ with $R\geq R_0$, the solution $u$ of the transmission problem is such that
there exists $u_{\mathcal A}  = (u_{+, \mathcal A}, u_{-, \mathcal A} ) \in C^\infty(B_R \cap\obstacle_+) \times C^\infty(\obstacle_-)$ and $u_{H^2} = (u_{+, H^2}, u_{-, H^2}) \in H^2(B_R \cap\obstacle_+) \times H^2(\obstacle_-) $, satisfying  (\ref{eq:trcont}), and such that
\beqs
u|_{B_{R}} = u_{\mathcal A} + u_{H^2}.
\eeqs
Furthermore there exist $C_1, C_2>0$, independent of $k$ but with $C_2= C_2(m)$, such that
\begin{equation} \label{eq:decHFTR}
\Vert \partial^\alpha u_{\pm, H^2} \Vert_{L^2(B_R \cap \obstacle_\pm)} \leq C_1 k^{|\alpha|-2}\Vert f \Vert_{L^2(B_R)} \quad \tfa  k \in  K\text{ and for all }  |\alpha|\leq 2,
\end{equation}
and
\begin{equation} \label{eq:decLFTR}
\Vert \partial^\alpha u_{\pm, \mathcal A} \Vert_{L^2(B_R \cap \obstacle_\pm)}  \leq C_2(m) k ^{ |\alpha| - 1 + M}\Vert f \Vert_{L^2(B_R)} \quad\tfa  k \in  K\text{ and for all }  |\alpha|\leq 2m.
\end{equation}
\end{manualtheorem}

\subsubsection{Corollary about frequency-explicit convergence of the $h$-FEM}

For simplicity we consider the case where the parameter $\beta$ in the transmission condition \eqref{eq:trcont} equals one; recall from the comments below Definition \ref{def:transmission} that, at least in the constant-coefficient case, this is without loss of generality.
The variational formulation of the transmission problem is then \eqref{eq:vf} with $B_R\cap\obstacle_+$ replaced by $B_R$ 
and $a(\cdot,\cdot)$ given by \eqref{eq:sesqui} with $c$ understood as equal to one in $B_R\cap \obstacle_+$ 

Since the constant $C_2$ in \eqref{eq:decLFTR} depends on
$m$, we cannot prove a result about the $hp$-FEM for the transmission problem
of Definition \ref{def:transmission}. We therefore consider the
$h$-FEM and prove the first sharp quasioptimality result for this
problem (see Remark \ref{rem:sig2} below for more discussion on the
novelty of our result). 

\begin{assumption}\label{ass:VN}
$(V_N)_{N=0}^\infty$ is a sequence of piecewise-polynomial approximation spaces on quasi-uniform meshes with mesh diameter $h$ and polynomial degree $p$. Furthermore, (i) the mesh consists of curved elements that exactly triangulate $B_R$ and $\obstacle_-$, so that each element in the mesh is included in either $\obstacle_-$ or $B_R\cap \obstacle_+$, and (ii) there exists an interpolant operator $I_{h,p}$ such that
for all $0\leq j \leq \ell \leq p$,  there exists $C(j,\ell,d)>0$ such that  
\beq\label{eq:interp}
\big| v- I_{h,p} v\big|_{H^j(B_R)}\leq C(j,\ell,d) \,h^{\ell+1-j} \Big( \N{v_+}_{H^{\ell+1}(B_R\cap\obstacle_+)} +
\N{v_-}_{H^{\ell+1}(\obstacle_-)} \Big)
\eeq
for all $v=(v_+, v_-) \in H^{\ell+1}(B_R\cap\obstacle_+)\times H^{\ell+1}(\obstacle_-)$.
\end{assumption}

Assumption \ref{ass:VN} is satisfied by the $hp$ approximation spaces described in \cite[\S5]{MeSa:10}, \cite[\S5.1.1]{MeSa:11} (with \eqref{eq:interp} holding by \cite[Theorem B.4]{MeSa:10}, and also by curved Lagrange finite-element spaces in \cite{Be:89} (with \eqref{eq:interp} holding by \cite[Theorem 4.1 and Corollary 4.1]{Be:89}).
\begin{manualtheorem}{C1}[Quasioptimality of $h$-FEM for the transmission problem]\label{thm:FEM2}
Let $d=2$ or $3$. Suppose that $\beta=1$, $A, c,$ and $\obstacle_-$ are as in Definition \ref{def:transmission}.
Given an integer $p$, if $p$ is odd assume that $\obstacle_-$ is $C^{p,1}$ and both $A$ and $c$ are $C^{p-1,1}$; if $p$ is even, assume that 
$\obstacle_-$ is $C^{p+1,1}$ and both $A$ and $c$ are $C^{p,1}$.

 Let $(V_N)_{N=0}^\infty$ be a sequence of piecewise-polynomial approximation spaces of degree $p$ satisfying Assumption \ref{ass:VN}
and let $u_N$ be the Galerkin solution defined by \eqref{eq:FEM}.

If $\Csol(k)$ is polynomially bounded (in the sense of Definition \ref{def:polybound}) for $k\in K\subset [k_0,\infty)$ then there exists $C>0$, depending on $A, c, R$, $d$, $k_0$, and $p$, but independent of $k$ and $h$, such that if 
\beqs
h^{p}k^{p+1+M} \leq C 
\eeqs
then, for all $k\in K$, the Galerkin solution exists, is unique, and satisfies the quasioptimal error bound 
\beqs
\N{u-u_N}_{H^1_k(B_R)}\leq \Cqo \min_{v_N\in V_N} \N{u-v_N}_{H^1_k(B_R)},
\eeqs
with $\Cqo$ given by \eqref{eq:Cqo}.
\end{manualtheorem}

The regularity assumptions in Theorem \ref{thm:FEM2} are optimal with $p$ is odd, but suboptimal when $p$ is even. This is due to Theorem \ref{thm:transmission} controlling Sobolev norms of even order of the solution, which is ultimately due to our using powers of the operator (which is of order two) to obtain regularity of the solution (see \eqref{eq:induction1} in the proof of Theorem \ref{thm:transmission}). For example, when $p=2$ we require $u\in H^3$ in Theorem \ref{thm:FEM2}, but we achieve this by requiring that $\obstacle_-,A,$ and $c$ are such that $u\in H^4$.

\bre[The significance of Theorem \ref{thm:FEM2}]\label{rem:sig2}
The fact that ``$h^p k^{p+1}$ sufficiently small'' is a sufficient condition for quasioptimality of the Helmholtz $h$-FEM in nontrapping situations (i.e.~$M=0$) was proved for a variety of Helmholtz problems for $p=1$ in \cite[Prop.~8.2.7]{Me:95}, 
\cite[Theorem 4.5]{GrSa:20}, \cite[Theorem 3]{GaSpWu:20} (building on the 1-d results of \cite[Theorem 3.2]{AzKeSt:88}, \cite[Theorem 3]{IhBa:95a}, \cite[Theorem 4.13]{Ih:98}, and \cite[Theorem 3.5]{IhBa:97}) 
and for $p>1$ in \cite[Corollary 5.6]{MeSa:10}, \cite[Remark 5.9]{MeSa:11}, \cite[Theorem 5.1]{GaChNiTo:18}, and
\cite[Theorem 2.15]{ChNi:20}.
Numerical experiments indicate that this condition is also necessary -- see, e.g., \cite[\S4.4]{ChNi:20}.

Of these existing results, only \cite[Theorem 2.15]{ChNi:20} covers the Helmholtz equation with variable $A$ and $c$ that are also allowed to be discontinuous. However, the results in \cite{ChNi:20} hold only when 
an impedance boundary condition is imposed on the truncation boundary (in our case $\partial B_R$), which is equivalent to approximating the exterior Helmholtz Dirichlet-to-Neumann map by $\ri k$. Furthermore, the proof of \cite[Theorem 2.15]{ChNi:20} uses 
the impedance boundary condition in an essential way. Indeed, in \cite[Proof of Lemma 2.13]{ChNi:20} the solution is expanded in powers of $k$, i.e.~$u = \sum_{j=0}^{\infty} k^j u_j$, and then on $\GR$ one has $\partial_\bn u_{j+1} = \ri u_j$; this relationship between $u_{j+1}$ and $u_j$ on $\GR$ no longer holds if $\DtN$ is not approximated by $\ri k$.

The Helmholtz equation with an impedance boundary condition is often used as a model problem for numerical analysis (see, e.g., the references in \cite[\S1.8]{GaLaSp:21}). However, it has recently been shown that, in the limit $k\tendi$ with the truncation boundary fixed, the error incurred in approximating the Dirichlet-to-Neumann map with $\ri k$ is bounded away from zero, independently of $k$, even in the best-possible situation when the truncation boundary equals $\partial B_R$ for some $R$; see \cite[\S1.2]{GaLaSp:21}. Therefore, even if one solves the problem truncated with an impedance boundary condition with a high-order method (i.e., $p$ large), the solution of the truncated problem will not be a good approximation to the true scattering problem when $k$ is large.
\ere

\subsection{The main result applied to the Helmholtz equation in $\mathbb R^d$ with $C^\infty$ coefficients}\label{sec:LSW3}

Theorem \ref{thm:mainbb} can also be used to recover the main result of \cite{LaSpWu:22}, namely \cite[Theorem 3.1]{LaSpWu:22}.
\begin{manualtheorem}{D} \mythmname{The main result of \cite{LaSpWu:22} as a corollary of Theorem \ref{thm:mainbb}}\label{thm:LSW3}
Assume that $\obstacle_- = \emptyset$ and that $A, c$ are as in Definition \ref{def:EDP} and are furthermore $C^\infty$.
If $\Csol(k)$ is polynomially bounded (in the sense of Definition \ref{def:polybound}), then, 
given $f\in L^2(B_R)$, the solution $u$ of the Helmholtz problem \eqref{eq:HelmholtzD}, \eqref{eq:src} is such that
there exists $u_{\mathcal A}$, analytic in $B_R$, and $u_{H^2} \in H^2(B_R)$, such that 
\beqs
u|_{B_{R}} = u_{\mathcal A} + u_{H^2}.
\eeqs
Furthermore, there exist $C_1, C_2$ and $C_3$, all independent of $k$ and $\alpha$, such that 
\begin{equation} \label{eq:decHFLSW3}
\Vert \partial^\alpha u_{H^2} \Vert_{L^2(B_R)} \leq C_1 k^{|\alpha|-2}\Vert f \Vert_{L^2(B_R)} \quad \tfa  k \in K \text{ and for all } |\alpha|\leq 2,
\end{equation}
and
\begin{equation} \label{eq:decLFLSW3}
\Vert \partial^\alpha u_{\mathcal A} \Vert_{L^2(B_R) } \leq C_2 (C_3)^{|\alpha|} k ^{ |\alpha| -1+M} \,\Vert f \Vert_{L^2(B_R)} \quad\tfa  k \in K \text{ and for all }   \alpha.\end{equation}
\end{manualtheorem}

Observe that, by Part (i) of Lemma \ref{lem:analytic}, $u_\cA$ is entire.
The decomposition in Theorem \ref{thm:LSW3} can be used to show that the $hp$-FEM applied to the Helmholtz equation in $\mathbb R^d$ with $C^\infty$ coefficients is quasioptimal (with constant independent of $k$) if the conditions \eqref{eq:thresholdD} hold; see \cite[Theorem 3.4]{LaSpWu:22}. 

\subsection{Informal discussion of the ideas behind Theorem \ref{thm:mainbb}}\label{sec:informal}

It is instructive to first recall the ideas behind the results of \cite{MeSa:10, MeSa:11, EsMe:12, MePaSa:13}.

\paragraph{How the results of \cite{MeSa:10, MeSa:11, EsMe:12, MePaSa:13} were obtained.}

The paper \cite{MeSa:10} considered the Helmholtz equation \eqref{eq:Helmholtz} posed in $\Rea^d$ with the Sommerfeld radiation condition \eqref{eq:src}. The decomposition $u= \uhigh + \ulow$ was obtained by decomposing the data $f$ in \eqref{eq:Helmholtz}
 into ``high-'' and ``low-'' frequency components, with $\uhigh$ the Helmholtz solution for the high-frequency component of $f$,
and $\ulow$ then the Helmholtz solution for the low-frequency component of $f$.
  The frequency cut-offs were defining using the indicator function
\beq\label{eq:MScutoff}
1_{B_{\lambda k}}(\zeta) := 
\begin{cases}
1 & \tfor |\zeta|\leq \lambda\, k,\\
0 & \tfor |\zeta|\geq \lambda\, k,
\end{cases}
\eeq
with $\lambda$ a free parameter 
(see \cite[Equation 3.31]{MeSa:10} and the surrounding text). 
In \cite{MeSa:10} the frequency cut-off \eqref{eq:MScutoff} was then used with (a) the expression for $u$ as a convolution of the fundamental solution and the data $f$, and (b) the fact that the fundamental solution is known explicitly for the PDE \eqref{eq:Helmholtz} to obtain the appropriate bounds on $\ulow$ and $\uhigh$ using explicit calculation (involving Bessel and Hankel functions).
The decompositions in \cite{MeSa:11, EsMe:12, MePaSa:13} for the exterior Dirichlet problem and interior impedance problem were obtained using the results of \cite{MeSa:10} combined with extension operators (to go from problems with boundaries to problems on $\Rea^d$).

 Because the proof technique in  \cite{MeSa:10} does not generalise to the variable-coefficient Helmholtz equation \eqref{eq:Helmholtzvar}, until the recent paper \cite{LaSpWu:22} there did not exist in the literature analogous decomposition results for the variable-coefficient Helmholtz equation. This was despite the increasing interest in the numerical analysis of \eqref{eq:Helmholtzvar}
see, e.g., \cite{Ch:16, BaChGo:17, ChNi:20, GaMo:19, Pe:20, GrSa:20, GaSpWu:20, LaSpWu:19, GoGrSp:20}. 

\paragraph{The recent results of \cite{LaSpWu:22}:~the decomposition for the variable-coefficient Helmholtz equation in free space.}

The paper \cite{LaSpWu:22} obtained the analogous decomposition to that in \cite{MeSa:10} 
for the Helmholtz problem in $\Rea^d$
but now for the variable-coefficient Helmholtz equation \eqref{eq:Helmholtzvar} with $A$ and $c \in C^\infty$. 
This result was obtained again using frequency cut-offs (as in \cite{MeSa:10}) but now applying them to the solution $u$ as opposed to the data $f$.
Any cut-off function that is zero for $|\zeta|\geq Ck$ is a cutoff to a compactly-supported set in phase space, and hence
enjoys analytic estimates. The main difficulty in \cite{LaSpWu:22}, therefore, was in showing that the high-frequency component $\uhigh$ satisfies a bound with one power of $k$ improvement over the bound satisfied by $u$. This was achieved by choosing the cut-off so that the (scaled) Helmholtz operator 
$k^{-2}\nabla\cdot (A \nabla) +  c^{-2}$ 
is \emph{semiclassically elliptic} on the support of the high-frequency cut-off. 
Then, choosing the cut-off function to be smooth (as opposed to discontinuous, as in \eqref{eq:MScutoff}) allowed \cite{LaSpWu:22} to use basic facts about the ``nice'' behaviour of elliptic semiclassical pseudodifferential operators (namely, they are invertible up to a small error) to prove the required bound on $\uhigh$.
The expository paper \cite{Sp:22} shows that, when $A= I$ and $c=1$, the arguments in \cite{LaSpWu:22} involving pseudodifferential operators reduce to using the Fourier transform, and in this case a frequency cut-off of the form \eqref{eq:MScutoff} can be used.

\paragraph{The frequency decomposition achieved in Theorem \ref{thm:mainbb}.}

In this paper, we achieve the desired decomposition into low- and
high-frequency pieces in the manner best adapted to the functional
analysis of the Helmholtz equation:~by using the functional calculus
for the Helmholtz operator itself.  Recall that once we realise the
operator
\beq\label{eq:P}
P=-c^2\nabla \cdot (A\nabla)
\eeq
with appropriate domain as a self-adjoint operator (on a space weighted by $c^{-2}$), the functional calculus for self-adjoint operators
allows us to define 
$\phi(P)$ for a broad class of functions $g.$  In
particular, given $k>0$, we take $\phi$ a cutoff function on $\Rea^d$ 
equal to $1$ on $B(0, \mu k)$ for some $\mu>1$.  Then, for fixed $k$,
$(1-\phi)(P)$ is a high-frequency cutoff and $\phi(P)$ a low-frequency
cutoff. 
We emphasise that working with functions of the operator can be thought of as just the classic idea of using expansions in terms of eigenfunctions of the differential operator. Indeed, in the special case $A=I, c=1$, these frequency cut-offs are simply Fourier
multipliers of the type used in \cite{LaSpWu:20}.

The novelty of the approach used here is to make the functional
calculus approach work in the much more general setting of
\emph{semiclassical black-box scattering} introduced by Sj\"ostrand-Zworski \cite{SjZw:91}, which
allows us to treat variable (possibly rough) media, impenetrable obstacles, and
penetrable obstacles all at once. We rescale,
setting $\hsc=k^{-1}$, and study operators 
$P_\hsc$ equal to a variable-coefficient Laplacian outside
the ``black-box'' $B_{R_0}$, and equal to $-\hsc^2\Delta$ outside a larger ball $B_{R_1}$.
 We are now
interested in functions of $P_{\hsc}$ of the form $\psi(P_{\hsc})$
with $\psi=1$ in $B(0, \mu)$ and $0$ in $(B(0, 2\mu))^c$. 
After multiplying the solution $u$ by a cut-off function $\varphi$ that equals one near the black box 
(since $u$ is only locally $L^2$), we split
$$
\varphi u=\Pihigh (\varphi u)+\Pilow (\varphi u)
$$
with
$$
\Pilow\equiv \psi(P_{\hsc}),\quad \Pihigh  \equiv (1-\psi)(P_{\hsc}),
$$
and both pieces again defined by the spectral theorem.
We now discuss the two pieces separately.

We wish to analyze $\Pihigh \varphi u$ by using the semiclassical ellipticity of
$P_{\hsc}-I$ on its support in phase space.  The latter notion would be
well-defined if $\Pihigh$ were globally a pseudodifferential
operator.
In the broad context of the black-box theory, though,
while the function $\psi(P_{\hsc})$ is well-defined as an abstract operator
on a Hilbert space, its
\emph{structure} is much less manifest than it would be for the flat
Laplacian in Euclidean space.  Not much can be said in any generality
about $\Pihigh$ on the black-box, but this is unnecessary in any event:
we use an abstract ellipticity argument based on the Borel
functional calculus, with the ellipticity in question now amounting to
the bounded invertibility of $P_\hsc-1$ on the range of $\Pihigh,$ which
just follows from the boundedness of the function $(\lambda-1)^{-1}(1-\psi(\lambda)).$
However, we do additionally need to understand the commutator of $\Pihigh$ with the
localiser $\varphi$. 
Fortunately, we are able to use the
Helffer--Sj\"ostrand approach to the functional calculus
\cite{HeSj:89} to
describe this commutator explicitly.  The method of \cite{HeSj:89} is a
powerful tool for obtaining the structure theorem that a decently-behaved function of a self-adjoint
elliptic differential operator is, as one might hope, in fact a pseudodifferential
operator \cite[Chapter 8]{DiSj:99} (a result originally due to
Strichartz \cite{St:72} in the setting of the homogeneous
pseudodifferential calculus and Helffer--Robert \cite{HeRo:83} in the
semiclassical setting used here). Additionally,
 Davies \cite{Da:95} later pointed out that in fact the
same method affords a novel proof of the functional calculus
formulation of the spectral theorem itself.
Here, we use some refinements of Sj\"ostrand \cite{Sj:97}  to learn
that \emph{away} from the black-box we can in fact treat $\Pihigh$ as a
pseudodifferential operator (see Lemma \ref{thm:funcloc2}), and hence deal with $[\Pihigh, \varphi]$ as
an element of the pseudodifferential calculus, solving it away by once
again using ellipticity (this time in the context of pseudodifferential
operators) together with our polynomial
resolvent estimate.

While the analysis of $\Pihigh \varphi u$ is insensitive to the contents of the
black-box,
our study of the low-frequency piece $\Pilow \varphi u$ necessarily entails
``opening'' the black-box and studying the local question of elliptic or parabolic estimates
within it.  Intuitively the compact support in the spectral parameter
of the spectral measure of
$P$ applied to $\Pilow \varphi u$ should imply that strong elliptic
estimates hold, but knowing Cauchy-type estimates on high derivatives  
is dependent on analyticity of the underlying problem. 
We therefore make the abstract regularity hypothesis (\ref{eq:lowenest})
locally near the black-box, which allows us to estimate the part of $\Pilow u$
spatially localised near its content. The remaining part living
in $\mathbb R^d$ is then given, thanks to  Sj\"ostrand \cite{Sj:97} 
again, by a Fourier multiplier up to negligible terms, and hence enjoys the analytic estimate \eqref{eq:decLF3} thanks to the properties
of the Fourier transform, as used in \cite{LaSpWu:22}.

If, for instance, $P$ is given by \eqref{eq:P}
exterior to a $C^\infty$
obstacle with Dirichlet boundary condition, we know by the functional
calculus that $P^m \Pilow \varphi u$ is bounded for all $m \in \mathbb{N}.$  This 
yields elliptic estimates which allow us to estimate all derivatives
of $\Pilow \varphi u$ up to the obstacle, but the resulting
estimates on $\pa^\alpha \Pilow u$ grow non-optimally in $\alpha$; see Corollary \ref{cor:reg} and
Theorem \ref{thm:transmission}. Such
estimates, which indeed are the only ones we have been able to obtain
in the case of penetrable obstacles, suffice for
applications to the $h$-FEM but are far from optimal in dealing with $hp$-FEM.
In the boundary case we
therefore use a stronger property of $\Pilow \varphi u:$ we can run the \emph{backward heat equation} on $\Pilow \varphi u$ for
as long as we like and obtain $L^2$ estimates on the result.  If the
boundary is analytic then known heat kernel estimates (see \cite{EsMoZh:17}) yield satisfactory
Cauchy-type estimates on $\pa^\alpha \Pilow \varphi u$; see Corollary \ref{cor:heat} and Theorem \ref{thm:Dirichlet}.

\subsection{Statement of the main result in the black-box setting}\label{sec:mainresult}

The following theorem (Theorem \ref{thm:mainbb}) obtains the decomposition $u=\uhigh+\ulow$ in the framework of black-box scattering
 introduced by Sj\"ostrand--Zworski in \cite{SjZw:91}. In this framework, the operator $P_\hsc$, where $\hsc:= k^{-1}$ is the semiclassical parameter
\footnote{The semiclassical parameter is often denoted by $h$, but we use $\hsc$ to avoid a notational clash with the meshwidth of the FEM appearing in \S\ref{sec:1.1} and used in Theorems \ref{thm:FEM1} and \ref{thm:FEM2}.}
, is a variable-coefficient Helmholtz operator outside $B_{R_0}$ (the ball of radius $R_0$ and centre zero) for some $R_0>0$, but is not specified inside this ball (i.e., inside the ``black box'').
In particular, this framework includes the Helmholtz exterior Dirichlet and transmission problems, and 
Theorems \ref{thm:Dirichlet} and \ref{thm:transmission} above are Theorem \ref{thm:mainbb} is specialised to those settings.

The theorem is stated using notation from the black-box framework, recapped in \S\ref{sec:blackbox}. 
The only non-standard concept we use is that of a \emph{black-box differentiation operator}, which is a family of operators 
agreeing with differentiation outside the black-box (see Definition
\ref{def:BBdiff} below).  

To understand the statement of the following theorem,
  the reader not familiar with black box scattering should read it
  with the following identifications, which always hold away from the
  black box, and, with suitable interpretation, continue to hold inside it
  in the examples considered below:~the Hilbert space $\mathcal H$ is $L^2$, 
 the operator $P_\hbar$ is $-\hbar^2  \Lap$, and the subspace $\mathcal D \subset \mathcal H$ is the domain of $P_\hbar$.
  The superscript $\sharp$ denotes the corresponding object
compactified onto a large reference torus $\mathbb T^d_{R_{\sharp}} := \quotient{\mathbb{R}^d}{(2R_{\sharp}\mathbb{Z})^d}$, so that 
$P_\hbar^\sharp$ is $-\hbar^2  \Lap,$ on the torus, and $\mathcal{D}^{\sharp,m}_\hsc$ the domain of $(P_\hbar^\sharp)^m$, with norms weighted in the standard way with $\hbar$ (see \eqref{eq:Hhnorm} below, and compare to \eqref{eq:1knorm}).
Finally, the notation $\lesssim$ indicates that the omitted constant is independent of $\hsc$ and $\alpha$ and 
\beq\label{eq:C0}
C_0(\RR) :=\Big\{f \in C(\RR)\colon \lim_{\lambda \to
  \pm \infty} f(\lambda)=0\Big\}.
\eeq

\begin{manualtheorem}{A} \mythmname{The decomposition in the black-box setting} \label{thm:mainbb}
Let $P_{\hsc}$ be a semiclassical black-box operator on $\mathcal H$ (in the sense of Definition \ref{def:bb}). Then there exists $\Lambda>0$ 
such that the following holds. Suppose that, for some $\hsc_0>0$,
there exists 
$\subsetH \subset (0,\hsc_0]$ such that the following two assumptions hold. 
\begin{enumerate}
\item There exists $\mathcal D_{\rm out} \subset \mathcal D_{\rm loc}$ 
and $M\geq0$ such that for any $\chi \in C^\infty_{\rm comp}(\mathbb R^d)$ equal to one near
$B_{R_0}$, there exists $C>0$ such that if $v \in \mathcal D_{\rm
  out}$ is a solution to $(P_\hsc- I)v=\chi g$, then
\begin{equation} \label{eq:res}
\Vert \chi v \Vert_{\mathcal H} \leq C \hsc^{-M-1}\Vert g\Vert_{\mathcal H} \quad \tfa  \hsc \in \subsetH.
\end{equation} 
\item 
There exists $\mathcal E \in C_0(\mathbb R)$ that is nowhere zero on $[-\Lambda, \Lambda]$ such that 
\beq\label{eq:Friday1}
\mathcal E(P^\sharp_\hsc) = E + \residual,
\eeq
where $E$ has the following property:~there exists $\rho \in C^\infty(\mathbb T^d_{R_{\sharp}})$ equal to one near $B_{R_0}$, such that, for some $\alpha$-family of black-box differentiation operators $(D(\alpha))_{\alpha \in \frak A}$, 
\begin{equation} \label{eq:lowenest}
\N{\rho D(\alpha) E v}_{\mathcal H^{\sharp}} \leq C_{\mathcal E}(\alpha, \hsc)\Vert v \Vert_{\mathcal H^\sharp} \quad \tfa  v \in \mathcal D_\hsc^{\sharp, \infty} \tand \hsc \in \subsetH,
\end{equation}
for some $C_{\mathcal E}(\alpha, \hsc)>0$.
\end{enumerate}
Given $R>0$ such that $R_0<R<R_\sharp$, 
if $g \in \mathcal H$ is compactly supported in $B_{R}$ and $u \in \mathcal D_{\rm out}$ satisfies
\begin{equation} \label{eq:pde}
(P_{\hsc} - 1)u = g,
\end{equation}
then there exists $u_{ H^2} \in \mathcal D^{\sharp}$ and $u_{\mathcal A} \in \mathcal D_\hsc^{\sharp,\infty}$ 
such that
\begin{equation} \label{eq:maindec}
u|_{B_{R}} = \big( \uhigh+ \ulow\big)|_{B_{R}}.
\end{equation}
Furthermore, $u_{H^2}$ satisfies
\begin{equation} \label{eq:decHF}
\Vert u_{H^2} \Vert_{\mathcal H^{\sharp} } + \big\| P^{\sharp}_{\hsc}u_{H^2}\big\|_{\mathcal H^{\sharp} }  \lesssim \Vert g \Vert_{\mathcal H} \quad \tfa  \hsc \in \subsetH,
\end{equation}
and for any $\widetilde R>0$ 
with $R_0<\widetilde R<R_\sharp$, there exist
$\RfarA, \RfarB, \RlocB, \RlocA$ with
$R_0<\RfarA<\RfarB<\RlocB<\RlocA<\widetilde R$
such that $u_{\mathcal A}$ decomposes as
\begin{equation} \label{eq:decLF0}
u_{\mathcal A} = u^{R_0}_{\mathcal A} + u^\infty_{\mathcal A},
\end{equation}
where $ u^{R_0}_{\mathcal A}\in \mathcal{D}^\sharp$ is regular near the black-box and negligible away from it, in the sense that
\begin{equation} \label{eq:decLF1}
\Vert D(\alpha) u^{R_0}_{\mathcal A} \Vert_{\mathcal H^{\sharp}(B_{\RlocA}) } \lesssim C_{\mathcal E}(\alpha, \hsc) \sup_{\lambda\in[-\Lambda, \Lambda]} \big|\mathcal E(\lambda)^{-1}\big|\; \hsc^{-M-1} \Vert g \Vert_{\mathcal H} \quad \tfa  \hsc \in \subsetH \tand \alpha \in \mathfrak A,
\end{equation}
and, for any $N,m>0$ there exists $C_{N,m}>0$ such that
\begin{equation} \label{eq:decLF2}
\Vert u^{R_0}_{\mathcal A} \Vert_{\mathcal D_\hsc^{\sharp,m}((B_{\RlocB})^c) } \leq C_{N,m}\hsc^N \Vert g \Vert_{\mathcal H}   \quad \tfa  \hsc \in \subsetH 
\end{equation}
and  $ u^{\infty}_{\mathcal A}$ is entire away from the black-box and negligible near it, in the sense that for some $\lambda>1$
\begin{equation} \label{eq:decLF3}
\Vert \partial^\alpha u^{\infty}_{\mathcal A} \Vert_{\mathcal H^{\sharp}((B_{\RfarA})^c) } \lesssim \lambda^{|\alpha|} \hsc^{-|\alpha|-M-1} \Vert g \Vert_{\mathcal H} \quad \tfa  \hsc \in \subsetH \tand \alpha \in \mathfrak A,
\end{equation}
and, for any $N,m>0$ there exists $C_{N,m}>0$ such that
\begin{equation} \label{eq:decLF4}
\Vert u^{\infty}_{\mathcal A} \Vert_{\mathcal D_\hsc^{\sharp,m}(B_{\RfarB}) } \leq C_{N,m}\hsc^N \Vert g \Vert_{\mathcal H}   \quad \tfa  \hsc \in \subsetH. 
\end{equation}
In addition, if $\mathcal E(P^\sharp_\hbar) = E$ (i.e., with no $\residual$ remainder in \eqref{eq:Friday1}), then the functions $u_\mathcal A, u^\infty_\mathcal A, u^{R_0}_\mathcal A , u_{H^2}$ are all independent of $\mathcal E$, and all the implicit constants above are independent of $\mathcal E$ as well.

Finally, if $\rho = 1$, the decomposition (\ref{eq:maindec}) can be constructed in such a way that instead of (\ref{eq:decLF0})--(\ref{eq:decLF4}), $u_{\mathcal A}$ satisfies
the global regularity estimate
\begin{equation} \label{eq:decLF5}
\Vert D(\alpha) u_{\mathcal A} \Vert_{\mathcal H^{\sharp} } \lesssim C_{\mathcal E}(\alpha, \hsc)  \sup_{\lambda\in[-\Lambda, \Lambda]} \big|\mathcal E(\lambda)^{-1}\big| \; \hsc^{-M-1} \Vert g \Vert_{\mathcal H} \quad \tfa  \hsc \in \subsetH\tand \alpha \in \mathfrak A;
\end{equation}
here as well, if $\mathcal E(P^\sharp_\hbar) = E$, then the functions $u_\mathcal A, u_{H^2}$ and all the above estimates do not depend on $\mathcal E$.
\end{manualtheorem}

Point 1 in Theorem \ref{thm:mainbb} is the assumption that the solution operator is polynomially bounded in $\hsc$. 
In the black-box setting, \cite{LaSpWu:20} proved that this assumption always holds with $M>5d/2$ and 
$\{\hbar^{-1}: \hbar \in \subsetH\}^c$ having arbitrarily small measure in $\Rea^+$ (see Part (ii) of Theorem \ref{thm:polyboundD} and Part (iv) of Theorem \ref{thm:polyboundT}). The solution operator is then polynomially bounded because $\subsetH$ excludes (inverse) frequencies close to resonances.
(Under an additional assumption about the location of resonances, a similar result with a larger $M$ can also be extracted from \cite[Proposition 3]{St:01} by using the Markov inequality.)

Point 2 in Theorem \ref{thm:mainbb} is a regularity assumption that depends on the contents of the black box.
We later refer to \eqref{eq:lowenest} as the ``low-frequency estimate'', since the fact that $\mathcal E$ is nowhere zero on $[-\Lambda,\Lambda]$ means that it bounds low-frequency components.
The cutoff $\rho$ in \eqref{eq:lowenest} is needed when the black box
  contains, e.g., an analytic obstacle and the operator inside has
  analytic coefficients; indeed the analyticity estimates that we use for \eqref{eq:lowenest} in this case cannot hold in the transition region outside the black
  box, where the coefficients cannot be analytic.
    
Regarding $\uhigh$:~comparing \eqref{eq:res} and \eqref{eq:decHF}, and recalling that in the nontrapping case \eqref{eq:res} holds with $M=0$, we see that $\uhigh$ satisfies a bound 
that is better, by at least one power of $\hsc$, than the bound satisfied by $u$;
this is the analogue of the property (i) in \S\ref{sec:1.1} of the results of 
\cite{MeSa:10, MeSa:11, EsMe:12, MePaSa:13}, 
and is a consequence of the semiclassical ellipticity of $P_\hsc -1$ on high-frequencies (discussed in \S\ref{sec:informal}). The regularity of $\uhigh$ depends on the domain of the operator ($\uhigh \in \mathcal D^{\sharp}$) but not on any other features of the black box (in particular, not on the regularity estimate \eqref{eq:lowenest}).

Regarding $\ulow$:
$\ulow$ is in the domain of arbitrary powers of the operator ($u_{\mathcal A} \in \mathcal D_\hsc^{\sharp,\infty}$) and so is smooth in an abstract sense. $\ulow$ is split further into two parts: $u^{R_0}_{\mathcal A}$ and $u^\infty_{\mathcal A}$, with 
$u^{R_0}_{\mathcal A}$ regular near the black-box and negligible away from it, and
$ u^{\infty}_{\mathcal A}$ entire away from the black-box and negligible near it;
Figure \ref{fig:line2} illustrates this set up (with ``$u^{R_0}_{\mathcal A}$ analytic'' replaced by ``$u^{R_0}_{\mathcal A}$ regular'').
Comparing \eqref{eq:res} and \eqref{eq:decLF1}/\eqref{eq:decLF3}, we see that, in the regions where they are not negligible,
$u^{R_0}_{\mathcal A}$ and $u^\infty_{\mathcal A}$
satisfy bounds with the same $\hsc$-dependence as $u$, but with improved regularity. These properties are the analogue of the property (ii) in \S\ref{sec:1.1} of the results of 
\cite{MeSa:10}, \cite{MeSa:11}, \cite{EsMe:12}, \cite{MePaSa:13}. In particular, the regularity of $\ulow$ depends on the regularity inside the black-box (from \eqref{eq:lowenest}), and, for the exterior Dirichlet problem with analytic obstacle and coefficients analytic in a neighbourhood of the obstacle, $\ulow$ is analytic.

\subsubsection{How to use Theorem \ref{thm:mainbb}} \label{sec:howto}

To apply Theorem \ref{thm:mainbb} to a scattering problem not discussed in this paper, 
the steps are the following. 
\begin{enumerate}
\item \label{i:a1} Check that the problem fits in the black-box scattering framework of  Sj\"ostrand--Zworski \cite{SjZw:91}.
\item \label{i:a2} Check that a polynomial bound on the solution operator (\ref{eq:res}) holds.
\item \label{i:a3} Show a ``low-frequency'' estimate of type (\ref{eq:lowenest}) for the  corresponding  compactified problem.
\end{enumerate}
Concerning Point \ref{i:a1}:~the black-box framework is specifically designed to include most scattering problems. Examples treated in the literature include scattering by a Lipschitz Dirichlet or Neumann obstacle (Lemma \ref{lem:obstacle},  \cite[\S 2.2]{LaSpWu:19}), by a Lipschitz penetrable obstacle (Lemma \ref{lem:transmission},  \cite[\S 2.2]{LaSpWu:19}),  by a compactly supported potential, by elliptic compactly supported perturbations of
the Laplacian, and scattering on finite volume surface (see for example \cite[\S 4.1]{DyZw:19} for these three last problems). For problems not already covered in the literature, of the conditions in 
\S  \ref{subsec:bb}, the condition on the growth of eigenvalues for the compactified operator (\ref{eq:gro}) will be the main non-trivial assumption to check (for examples of checking this assumption, see, e.g., \S\ref{app:gro}, \cite[Appendix A]{LaSpWu:19}).

Concerning Point \ref{i:a2}:~as mentioned below Theorem \ref{thm:mainbb}, this assumption holds for any $M>5d/2$ 
and for most frequencies by \cite{LaSpWu:19}.
 For nontrapping problems, one expects 
(\ref{eq:res}) to hold with $M=0$ and $\subsetH = (0, h_0]$ (see, e.g., Theorem \ref{thm:polyboundD} below and the references therein).

Therefore, the key step in applying Theorem \ref{thm:mainbb} is Point \ref{i:a3}:~show a ``low-frequency'' estimate of type (\ref{eq:lowenest}) for the corresponding  compactified problem (i.e., the same problem, but considered in a large reference torus). This estimate dictates the regularity estimate on the component $u_{\mathcal A}$, hence, the better the estimate, the better the decomposition. In practical applications, the operator $D(\alpha)$ in (\ref{eq:lowenest}) will be nothing but differentiation $D(\alpha) := \partial^\alpha$. The two main considerations are then the following.
\begin{itemize}
\item[\ref{i:a3}-a.] Understand if one needs $\rho = 1$, or $\rho$ vanishing away from the scatterer. If one aims for an analytic-type estimate, because the problem under consideration has constant coefficients outside a compact set, it cannot typically be analytic everywhere, 
and one needs to take 
$\rho$ vanishing away from the scatterer. For lower-regularity estimates, one can  use a global estimate, i.e., with $\rho = 1$.
\item[\ref{i:a3}-b.] Choose the operator $E$ and the function $\mathcal E$. In the first instance, one can ignore the flexibility given by the error term and aim for $E=\mathcal E(P^\sharp_\hbar)$. 
The function $\mathcal E$ is then dictated by the type of estimate used. 
 For example:
\begin{itemize}
\item $\mathcal E(\lambda) = \re^{-|\lambda|}$ corresponds to a heat-flow estimate (see the proof of Corollary \ref{cor:heat}),
\item $\mathcal E(\lambda) = \sqrt{1+\lambda^2}^{-L}$, $L\geq 1$ corresponds to an elliptic estimate (see the proof of Corollary \ref{cor:reg}),
\item $\mathcal E \in C^\infty_{\rm comp}$ with $\mathcal E = 1$ in $[-M, M]$ corresponds to an estimate on the eigenfunctions of the compactified operator
(see the proof of Theorem \ref{thm:LSW3} in \S\ref{sec:LSW3proof}).
\end{itemize}
An example where the error term in $\mathcal E(P^\sharp_\hbar)=E + \residual$ gives more flexibility is 
the proof of Theorem \ref{thm:LSW3}, where the 
error term is used to take advantage of the regularity of the eigenfunctions of $-\Delta$ on the torus, instead of those of the variable-coefficient operator.

On the other hand, the fact that if $\mathcal E(P^\sharp_\hbar) = E$ (i.e., with no $\residual$ remainder in \eqref{eq:Friday1}) then the decomposition is independent of $\mathcal E$, allows us to use a \emph{family} of $\mathcal E$'s in \eqref{eq:Friday1} and hence a family of estimates as \eqref{eq:lowenest}. This feature allows us to tune the choice of $\mathcal E$, depending on $\hbar$ and $\alpha$, to get the best possible estimate; this procedure is used in the proof of Theorem \ref{thm:Dirichlet} using 
Corollary \ref{cor:heat}, using a heat-flow estimate with a time depending on $\hbar$ and $\alpha$.

Finally, note that Theorem \ref{thm:mainbb} assumes that $\mathcal{E}\in C_0(\Rea)$, but  this is not essential. We could replace $\mathcal{E}$ with an element of $C^\infty_{\rm comp}$ by extending the functions above smoothly from $\{\lambda \geq 0\}$ to $\{\lambda < 0\}$ and multiplying by a cut-off; this is possible since the spectrum of $P^\sharp_\hbar$ is in $[0,\infty)$.
\end{itemize} 

\subsection{Outline of the rest of the paper}
Section~\ref{sec:blackbox} recalls the black-box framework and sets up the associated functional calculus.
Section~\ref{sec:blackboxresult} proves Theorem \ref{thm:mainbb}.
Section~\ref{sec:obstacles} proves Theorems \ref{thm:Dirichlet} and \ref{thm:transmission} (i.e., Theorem \ref{thm:mainbb} specialised to the exterior Dirichlet and transmission problems), and Theorem \ref{thm:LSW3}.
Section~\ref{sec:FEM} proves Theorems \ref{thm:FEM1} and \ref{thm:FEM2} (i.e., the convergence results for the $hp$-FEM for the exterior Dirichlet problem and the $h$-FEM for the transmission problem).
Appendix~\ref{app:sct} recalls results about semiclassical pseudodifferential operators on the torus.
Appendix~\ref{app:gro} 
proves a subsidiary result used to prove Lemma \ref{lem:transmission}. 

\section{Recap of the black-box framework}\label{sec:blackbox}

\subsection{Abstract framework} \label{subsec:bb}

We now briefly recap the abstract framework of \emph{black-box scattering} introduced in \cite{SjZw:91};
for more details, see the comprehensive presentation in \cite[Chapter 4]{DyZw:19}. A brief overview of black-box scattering with an emphasis on the counting of resonances is contained in \cite[\S2]{LaSpWu:20}.

We emphasise that here we use the approach of \cite[\S2]{Sj:97}, where the black-box operator is a variable-coefficient Laplacian (with smooth coefficients) outside the black-box, and not
the Laplacian $-\hsc^2\Delta$ itself as in \cite[Chapter 4]{DyZw:19}
(although the operator still agrees with $-\hsc^2 \Delta$ outside a
  sufficiently large ball).

\subsubsection*{The Hilbert-space decomposition}

Let $\mathcal{H}$ be an Hilbert space with an orthogonal decomposition
\begin{equation} \label{eq:bbdec}\tag{BB1}
\mathcal{H}=\mathcal{H}_{R_{0}}\oplus L^{2}(\mathbb{R}^{d}\backslash B_{R_0}, \omega(x)\rd x),
\end{equation}
where the weight-function $\omega : \mathbb R^d \rightarrow \mathbb R$ is measurable
 and $\supp(1-\omega)$ is compact in $\Rea^d$.
 Let $\mathbf 1_{B_{R_0}}$ and $\mathbf 1_{\mathbb{R}^{d}\backslash B_{R_0}}$ denote the corresponding orthogonal projections. 
Let $P_{\hsc}$ be a family in $\hsc$ of self adjoint operators $\mathcal{H}\rightarrow\mathcal{H}$
with domain $\mathcal{D}\subset\mathcal{H}$ independent of $\hsc$ (so that, in particular, $\mathcal{D}$ is dense in $\mathcal{H}$). 
Outside the black-box $\mathcal H_{R_0}$,
we assume that $P_\hsc$ equals $Q_\hsc$ defined as follows.
We assume that, for any multi-index $|\alpha|\leq 2$, there exist functions $a_{\hsc, \alpha} \in C^\infty(\mathbb R^d)$, uniformly bounded with respect to $\hsc$, independent of $\hsc$ for $|\alpha|=2$, and such that (i) for some $C_1>0$
\begin{equation} \label{eq:propq_ell}
\sum_{|\alpha|=2} a_{\hsc, \alpha}(x)\xi^\alpha \geq C_1|\xi|^2\quad \tfa x\in \Rea^d,
\end{equation}
(ii) for some $R_{1}>R_0$ 
\begin{equation*} 
\sum_{|\alpha|\leq 2} a_{\hsc, \alpha}(x)\xi^\alpha = |\xi|^2
\hspace{0.3cm}\text{for }|x|\geq R_1,
\end{equation*}
and (iii) the operator $Q_\hsc$ defined by
\beq\label{eq:Qdef}
Q_\hsc := \sum_{|\alpha|\leq2}a_{\hsc,\alpha}(x)(\hsc D_x)^\alpha
\eeq
(where $D:= -\ri \partial$) is formally self-adjoint on $L^2(\mathbb R^d, \omega(x) \rd x)$.

We require the operator $P_{\hsc}$ to be equal to $Q_\hsc$ outside 
the black-box $\mathcal{H}_{R_{0}}$ in the sense that 
\beq\label{eq:bbreq1}\tag{BB2}
 \boldsymbol{1}_{\mathbb{R}^{d}\backslash B_{R_0}}(P_{\hsc}u)=Q_\hsc (u\vert_{\mathbb{R}^{d}\backslash B_{R_0}}) \quad\tfor u\in \cD, \quad\tand\quad
\boldsymbol{1}_{\mathbb{R}^{d}\backslash B_{R_0}}\mathcal{D} \subset H^{2}(\mathbb{R}^{d}\backslash B_{R_0}).
\eeq
We further assume that if, for some $\eps>0$,
\beq\label{eq:bbreq2a}\tag{BB3}
v\in H^{2}(\mathbb{R}^{d})\quad\tand \quad v\vert_{B_{R_{0}+\eps}}=0,\quad \text{ then }\quad v\in\mathcal{D},
\eeq
(with the restriction to $B_{R_0+\epsilon}$ defined in terms
  of the projections in \eqref{eq:bbreq1}; see also \eqref{eq:restriction} below)
and that
\beq\label{eq:bbreq2}\tag{BB4}
\boldsymbol{1}_{ B_{R_0}}(P_{\hsc}+\ri)^{-1}\text{ is compact from } \cH \rightarrow \cH.
\eeq
Under these assumptions, the semiclassical resolvent 
$$
R(z,\hsc):=(P_{\hsc}-z)^{-1}:\mathcal{H}\rightarrow\mathcal{D}
$$
is meromorphic for $\Imag \,z>0$ and extends to a meromorphic
family of operators of $\mathcal{H_{\rm comp}}\rightarrow\mathcal{D}_{\rm loc}$
in the whole complex plane when $d$ is odd and in the logarithmic
plane when $d$ is even \cite[Theorem 4.4]{DyZw:19};
where $\mathcal H_{\rm comp}$ and $\mathcal{D}_{\rm loc}$ are defined by
$$
\mathcal H_{\rm comp} := \Big\{ u\in \mathcal H \; : \; \mathbf 1_{\mathbb R^d\backslash B_{R_0}}u \in L^2_{\rm comp}(\mathbb R^d \backslash B_{R_0}) \Big\},
$$
(where $L^2_{\rm comp}$ denotes compactly-supported $L^2$ functions) and
\begin{align}\nonumber
\mathcal D_{\rm loc} := \Big\{ u\in \mathcal{H}_{R_{0}}\oplus L_{\rm loc}^{2}(\mathbb{R}^{d}\backslash B_{R_0})\; : \; &\tif\,\chi \in C^\infty_{\rm comp}(\mathbb R^d), \; \chi|_{B_{R_0}} =1  \\ 
&\text{then }  (\mathbf 1_{B_{R_0}}u,
\chi \mathbf 1_{\mathbb R^d\backslash B_{R_0}} u) \in \mathcal D \Big\}.\label{eq:Dloc}
\end{align}

\subsubsection*{The reference operator $P^{\text{\ensuremath{\sharp}}}_\hsc$}
Let $R_{\sharp}>R_{1}$
 be such that $\supp(1-\omega)\subset B_{R_\sharp}$, and let $ \mathbb T^d_{R_{\sharp}} := \quotient{\mathbb{R}^d}{(2R_{\sharp}\mathbb{Z})^d}$; we work with $[-R_{\sharp}, R_{\sharp}]^d$ as a fundamental domain for this torus.
Let
\begin{equation*}
\mathcal H^{\sharp} := \mathcal H_{R_0} \oplus  L^{2}(\mathbb T^d_{R_{\sharp}}\backslash B_{R_0}, \omega(x) \, \rd x),
\end{equation*}
and let $\mathbf 1_{B_{R_0}}$ and  $\mathbf 1_{\mathbb T^d_{R_{\sharp}}\backslash B_{R_0}}$ denote the corresponding orthogonal
projections. We
 define
\begin{gather} 
\mathcal D^{\sharp} := \Big\{ u \in \mathcal H^{\sharp}: \; \tif\,\chi \in C^\infty_{\rm comp}(B_{R_{\sharp}}), \; \,\chi = 1 \text{ near } B_{R_0}, \tthen (\mathbf 1_{B_{R_0}}u,
\chi \mathbf 1_{\mathbb T^d_{R_{\sharp}}\backslash B_{R_0}} u) \in \mathcal D, \nonumber \\
 \tand\, (1-\chi)\mathbf 1_{\mathbb T^d_{R_{\sharp}}\backslash B_{R_0}}u \in H^2(\mathbb T^d_{R_{\sharp}}) \Big\}, \label{eq:defdomsharp}
\end{gather}
and, for any $\chi$ as in \eqref{eq:defdomsharp} and $u \in \mathcal D^{\sharp}$,
\begin{gather} 
P^{\sharp}_{\hsc} u := P_{\hsc}(\mathbf 1_{B_{R_0}}u, \chi \mathbf 1_{\mathbb T^d_{R_{\sharp}}\backslash B_{R_0}} u) 
+ Q_\hsc\big((1-\chi)\mathbf 1_{\mathbb T^d_{R_{\sharp}}\backslash B_{R_0}}u\big), \label{eq:defref}
\end{gather}
where we have identified functions supported in $B(0,
  R_{\sharp})\backslash B(0, R_0)\subset
\mathbb{T}^d_{R_\sharp}\backslash B(0, R_0)$ with the corresponding functions on $\mathbb R^d\backslash
B(0, R_0)$ -- see the paragraph on notation below.

Let $q_\hsc \in S^2(\mathbb T^d_{R_\sharp})$ denote the principal symbol of $Q_\hsc$ as an operator acting on the torus $\mathbb T_{R_\sharp}^d$ (see Appendix \ref{app:sct} for a review of semiclassical pseudodifferential operators on $\mathbb
T^d_{R_{\sharp}}$). 
We record for later the fact that \eqref{eq:propq_ell}, \eqref{eq:Qdef}, and the uniform boundedness of $a_{\hsc,\alpha}(x)$ with respect to $\hsc$ imply that there exist $C_1, C_2>0$ such that 
\beq\label{eq:Qnew}
C_1|\xi|^2 \leq q_\hsc \leq C_2|\xi|^2 \quad\text{ for sufficiently large $\xi$}.
\eeq
The idea behind these definitions is that we have glued our black box
into a torus instead of $\mathbb{R}^d$, and then defined on the torus an operator $P^{\sharp}_{\hsc}$ that can be thought of as $P_{\hsc}$ in $\mathcal{H}_{R_{0}}$ and $Q_\hsc$ in
$(\mathbb{R}/2R_{\sharp}\mathbb{Z})^{d}\backslash B_{R_0}$;
see Figure \ref{fig:bb}.
The resolvent $(P^\sharp_\hsc + i)^{-1}$ is compact (see \cite[Lemma 4.11]{DyZw:19}), and hence the spectrum of $P^\sharp_\hsc$, denoted by $\operatorname{Sp}P^\sharp_\hsc$, is discrete (i.e., countable and with no accumulation point).

We assume that
the eigenvalues of $P^{\sharp}_{\hsc}$ satisfy the \emph{polynomial
growth of eigenvalues condition }
\begin{equation}\label{eq:gro}\tag{BB5}
N\big(P^{\sharp}_{\hsc},[-C,\lambda]\big)=O(\hsc^{-d^{\sharp}}\lambda^{d^{\sharp}/2}),
\end{equation}
for some $d^{\sharp}\geq d$ and $N(P^{\sharp}_\hsc,I)$ is the number of eigenvalues of
$P^{\sharp}_{\hsc}$ in the interval $I$, counted with their multiplicity. 
When $d^{\sharp}=d$, the asymptotics \eqref{eq:gro} correspond to a Weyl-type upper bound, 
and thus \eqref{eq:gro} can be thought of as a weak Weyl law. 

We summarise with the following definition.

\begin{definition} \mythmname{Semiclassical black-box operator} \label{def:bb}
We say that a family of self-adjoint operators $P_{\hsc}$ on a Hilbert space $\mathcal H$, with dense domain $\cD$, independent of $\hsc$, is a semiclassical black-box operator if $(P_{\hsc}, \mathcal H)$ satisfies (\ref{eq:bbdec}),  (\ref{eq:bbreq1}),  (\ref{eq:bbreq2a}), (\ref{eq:bbreq2}), (\ref{eq:gro}).
\end{definition}

\begin{figure}[h!]
\begin{center}
    \includegraphics[scale=0.4]{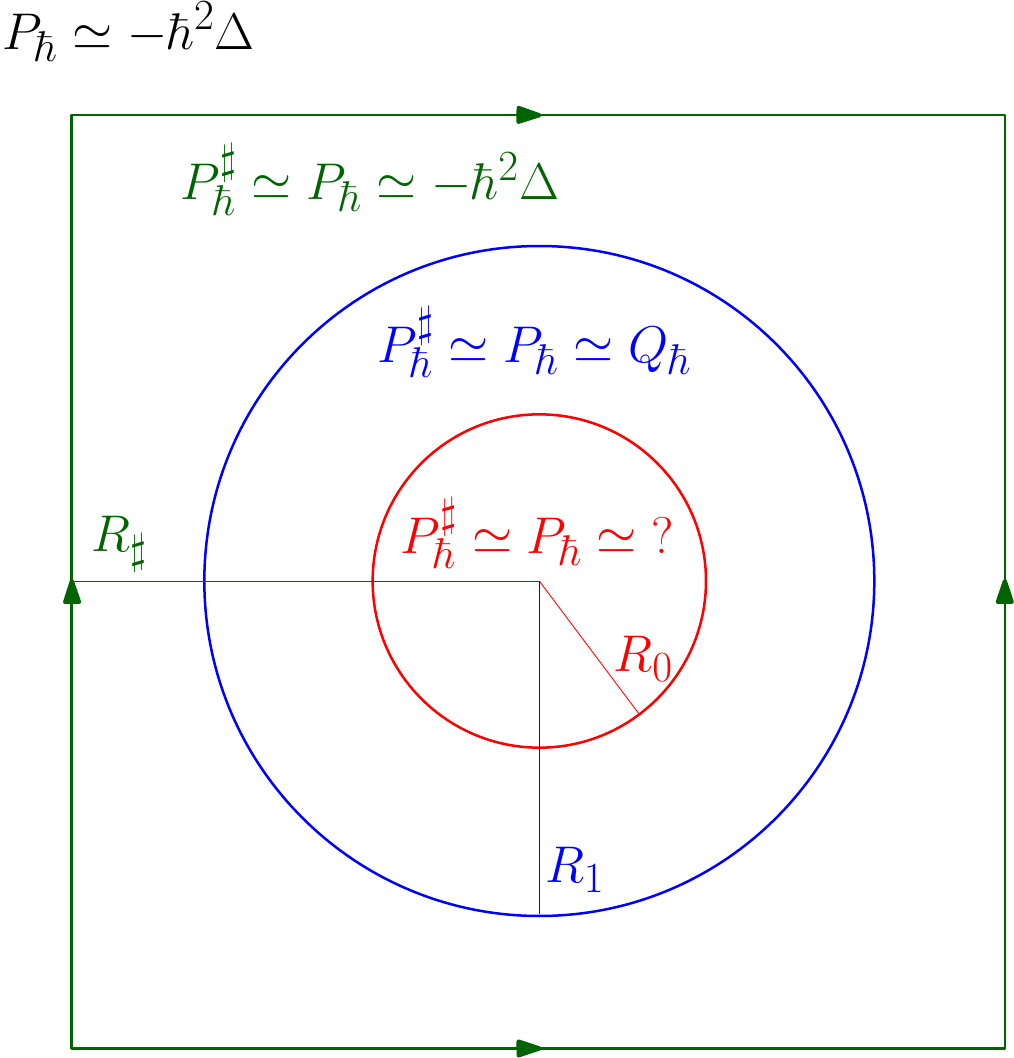}
  \end{center}
    \caption{The black-box setting. The symbol $\simeq$ is used to denote equality in the sense of \eqref{eq:bbreq1} and \eqref{eq:defref}.}
       \label{fig:bb}
\end{figure}

We define a family of black-box differentiation operators as a family of operators 
agreeing with differentiation outside the black-box (note that there is no notion of derivative inside the black-box itself).

\begin{definition}[Black-box differentiation operator]\label{def:BBdiff}
$(D(\alpha))_{\alpha \in \frak A}$ is a family of black-box
differentiation operators on $\mathcal D^{\sharp, \infty}_{\hsc}$ (defined by  \eqref{eq:Dinfty} below)
 if $\frak A$ is a family of $d$--multi-indices, and 
for any $\alpha$ and any $v \in C^\infty_{\rm comp}(\mathbb T^d_{R_{\sharp}}\backslash \overline{B_{R_0}})$,
$$
D(\alpha)v 
=\partial^\alpha v.
$$
\end{definition}

\subsubsection*{Notation}

We identify in the natural way:
\begin{itemize}
\item the elements of $\{ 0 \} \oplus L^2(\mathbb T_{R_{\sharp}}^d \backslash B_{R_0}) \subset \mathcal H^{\sharp}$,
\item the elements of $L^2(\mathbb T_{R_{\sharp}}^d \backslash B_{R_0})$, 
\item the elements of $L^2(\mathbb T_{R_{\sharp}}^d)$ essentially supported outside $B_{R_0}$, 
    \item the elements of $L^2(\mathbb R^d)$ essentially supported in $[-R_{\sharp},R_{\sharp}]^d  \backslash B_{R_0}$,
\item and the elements of $\{ 0 \} \oplus L^2(\mathbb R^d \backslash B_{R_0}) \subset \mathcal H$ whose orthogonal projection onto 
$L^2(\mathbb R^d \backslash B_{R_0})$ is essentially supported in $[-R_{\sharp},R_{\sharp}]^d \backslash B_{R_0}$.
\end{itemize}
If $v \in \mathcal H$ and $\chi \in C^\infty_{\rm comp}(\mathbb R^d)$ is equal to some constant $\alpha$ near $B_{R_0}$, we \emph{define}
\beq\label{eq:mult}
\chi v := (\alpha\mathbf 1_{B_{R_0}} v,  \chi \mathbf 1_{\mathbb R^d \backslash B_{R_0}} v) \in \mathcal H.
\eeq
(for example, using this notation, the requirements on $u$ in the definition of $\cD^\sharp$ 
\eqref{eq:defdomsharp}
are $\chi u \in \cD$ and $(1-\chi)u \in H^2(\mathbb{T}^d_{R_{\sharp}})$ for $\chi$ equal to $1$ near $B_{R_0}$).

If $v\in \mathcal H$ and $R>R_0$, we define
\beq\label{eq:restriction}
v|_{B_{R}} := \big(\mathbf 1_{B_{R_0}} v, (\mathbf 1_{\mathbb R^d \backslash B_{R_0}} v \big)|_{B_{R}}) \in \mathcal{H}_{R_{0}}\oplus L^{2}(B_{R}\backslash B_{R_0}),
\eeq
and, if  $v\in \mathcal H^{\sharp}$,
$$
v|_{B_{R}} := \big(\mathbf 1_{B_{R_0}} v, (\mathbf 1_{\mathbb T_{R_{\sharp}}^d \backslash B_{R_0}} v \big)|_{B_{R}}) \in \mathcal{H}_{R_{0}}\oplus L^{2}(B_{R}\backslash B_{R_0}).
$$
Furthermore, we say that $g\in\mathcal H$ is compactly supported in $B_{R}$ if $g = \chi_0 g$ for some $\chi_0 \in C^\infty_{\rm comp}(\mathbb R^d)$ equal to one near $B_{R_0}$ and supported in $B_{R}$.

Finally, if $R_0\leq r \leq R_\sharp$, we define the partial norms 
$$
\Vert u \Vert_{\mathcal H^\sharp(B_r)} =\Vert u \Vert_{\mathcal
  H(B_r)}:= \Vert u \Vert_{\mathcal H_{R_0} \oplus L^2(B_r\backslash B_{R_0})}, 
\qquad\Vert u \Vert_{\mathcal H^\sharp(B^c_r)} := \Vert \mathbf 1_{\mathbb{T}^d_{R_{\sharp}}\backslash B_{R_0}} u \Vert_{ L^2(\mathbb T^d_{R_\sharp}\backslash B_r)}
$$
and
$$
\Vert u \Vert_{\mathcal H(B^c_r)} := \Vert \mathbf 1_{\mathbb R^d \backslash B_{R_0}} u \Vert_{ L^2(\mathbb{R}^d\backslash B_r)}.
$$

\subsection{Scattering problems fitting in the black-box framework}\label{sec:bbexamples}

The two following lemmas show that both scattering by Dirichlet obstacles with variable coefficients and scattering by penetrable obstacles fit in the black-box framework. For other examples of scattering problems fitting in the black-box framework, see \cite[\S4.1]{DyZw:19}.

\begin{lemma}\mythmname{Scattering by a Dirichlet  obstacle fits in the black-box framework}\label{lem:obstacle}
Let $\obstacle_-, A, c, R_0,$ and $R_1$ be as in Definition \ref{def:EDP}. 
Then 
the family of operators
$$
P_\hsc v := - \hsc^{2} c^2 \nabla \cdot \big(A\nabla v)
$$
with the domain 
\beqs
\cD_D :=H^2(\obstacle_+)\cap H^1_0(\obstacle_+)
\eeqs
is a semiclassical black-box operator (in the sense of Definition \ref{def:bb}) with $\omega=c^{-2}$,
$Q_\hsc= -\hsc^2 c^2\nabla\cdot(A\nabla)$, and
\beqs 
\cH_{R_0} = L^2\big( B_{R_0} \cap \obstacle_+; c^{-2}(x)\rd x\big)
\quad \text{ so that } \quad \cH = L^2\big(\obstacle_+; c^{-2}(x)\rd x\big).
\eeqs
Furthermore the corresponding reference operator $P^{\sharp}_{\hsc}$ satisfies \eqref{eq:gro} with $d^{\sharp}=d$.
\end{lemma}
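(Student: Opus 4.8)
The plan is to verify, in turn, the five defining properties \eqref{eq:bbdec}, \eqref{eq:bbreq1}, \eqref{eq:bbreq2a}, \eqref{eq:bbreq2}, \eqref{eq:gro} of Definition \ref{def:bb}, together with the weak Weyl bound $d^{\sharp}=d$; the key preliminary is to realise $P_\hsc$ as a Friedrichs extension. Since $A$ is real symmetric and $c$ is bounded above and below, on the weighted space $\cH=L^2(\obstacle_+;c^{-2}\rd x)$ the form $a(u,v):=\int_{\obstacle_+}(A\nabla u)\cdot\overline{\nabla v}\,\rd x$ with form domain $H^1_0(\obstacle_+)=\{v\in H^1(\obstacle_+):\gamma v=0\}$ is densely defined, symmetric, non-negative by \eqref{eq:Aelliptic}, and closed (because $a(u,u)+\|u\|_\cH^2$ is equivalent to the usual $H^1$-norm). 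Its associated self-adjoint operator is $v\mapsto-c^2\nabla\cdot(A\nabla v)$: the role of the $c^{-2}$-weight is exactly to cancel the prefactor $c^2$, so the representation condition ``$a(v,\cdot)=\langle f,\cdot\rangle_\cH$ on $H^1_0(\obstacle_+)$'' is equivalent to ``$\nabla\cdot(A\nabla v)\in L^2$'', whence its domain is exactly $\cD_D$. Thus $P_\hsc=\hsc^2$ times this operator is self-adjoint with dense $\hsc$-independent domain $\cD_D$, and \eqref{eq:bbdec} holds with $\omega=c^{-2}$ (measurable, bounded above and below) and $\cH_{R_0}=L^2(B_{R_0}\cap\obstacle_+;c^{-2}\rd x)$, using $\obstacle_-\subset B_{R_0}$ so that $\obstacle_+$ coincides with $\R^d$ outside $B_{R_0}$.

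For \eqref{eq:bbreq1}: outside $B_{R_0}$ the obstacle is absent and $P_\hsc$ acts there as $Q_\hsc=-\hsc^2 c^2\nabla\cdot(A\nabla)$; its order-two coefficients are $\hsc$-independent (hence trivially uniformly bounded in $\hsc$), its principal symbol is $c^2\sum_{i,j}A_{ij}\xi_i\xi_j\ge c_{\min}^2A_{\min}|\xi|^2$ by \eqref{eq:Aelliptic}, which is \eqref{eq:propq_ell}, and equals $|\xi|^2$ for $|x|\ge R_1$ since there $A=I$, $c=1$; formal self-adjointness with respect to $c^{-2}\rd x$ is the same integration by parts as above, using symmetry of $A$. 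The inclusion $\mathbf 1_{\R^d\setminus B_{R_0}}\cD_D\subset H^2(\R^d\setminus B_{R_0})$ follows from standard elliptic regularity for the uniformly elliptic $C^\infty$-coefficient operator $\nabla\cdot(A\nabla)$: no boundary condition is imposed on the artificial sphere $\{|x|=R_0\}$ (which lies in the interior of $\obstacle_+$), so one gets interior $H^2$-regularity up to it from the exterior side, and near infinity, where $A=I$, the global $L^2$ estimate for $-\Delta$ on $\R^d$ (applied after multiplying by a cutoff equal to one near infinity) gives $H^2$-control there. Condition \eqref{eq:bbreq2a} is immediate: if $v\in H^2(\R^d)$ vanishes on $B_{R_0+\eps}$ then $v$ is supported in $\obstacle_+$ with $\gamma v=0$ and $\nabla\cdot(A\nabla v)=\sum_{i,j}\partial_i(A_{ij}\partial_j v)\in L^2$ since $A\in C^{0,1}$, so $v\in\cD_D$. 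For \eqref{eq:bbreq2}, testing $(P_\hsc+\ri)u=f$ against $u$ and using \eqref{eq:Aelliptic} gives $\|(P_\hsc+\ri)^{-1}\|_{\cH\to H^1(\obstacle_+)}<\infty$; composing with $\mathbf 1_{B_{R_0}}$, which restricts to the bounded Lipschitz domain $B_{R_0}\cap\obstacle_+$, and invoking the compact Rellich embedding $H^1(B_{R_0}\cap\obstacle_+)\hookrightarrow L^2(B_{R_0}\cap\obstacle_+)$ shows $\mathbf 1_{B_{R_0}}(P_\hsc+\ri)^{-1}$ is compact on $\cH$.

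Finally, for \eqref{eq:gro} with $d^{\sharp}=d$: by \eqref{eq:defdomsharp}--\eqref{eq:defref} the reference operator is $P^{\sharp}_\hsc=\hsc^2 L^{\sharp}$, where $L^{\sharp}=-c^2\nabla\cdot(A\nabla)$ on $\mathbb{T}^d_{R_{\sharp}}\setminus\obstacle_-$ with Dirichlet condition on $\partial\obstacle_-$ — non-negative, self-adjoint, with discrete spectrum $0\le\mu_1\le\mu_2\le\cdots$ (the discreteness being the general black-box fact recalled in \S\ref{sec:blackbox}). Since $\mu_j\ge 0$ one has $N(P^{\sharp}_\hsc,[-C,\lambda])=N(L^{\sharp},[0,\hsc^{-2}\lambda])$, so it suffices to prove the Weyl upper bound $N(L^{\sharp},[0,\mu])=O(\mu^{d/2})$. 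By min--max, since $a(u,u)/\|u\|_\cH^2\ge A_{\min}c_{\min}^2\,\|\nabla u\|^2_{L^2}/\|u\|^2_{L^2}$, we get $\mu_j\ge A_{\min}c_{\min}^2\,\nu_j$ with $\nu_j$ the $j$-th Dirichlet eigenvalue of $-\Delta$ on $\mathbb{T}^d_{R_{\sharp}}\setminus\obstacle_-$; and, extending test functions by zero across $\obstacle_-$ (which places them in $H^1(\mathbb{T}^d_{R_{\sharp}})$), $\nu_j\ge\lambda_j$, the $j$-th eigenvalue of $-\Delta$ on the full torus $\mathbb{T}^d_{R_{\sharp}}$. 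The torus eigenvalues are explicit, so $\#\{j:\lambda_j\le\Lambda\}=O(\Lambda^{d/2})$; chaining the two inequalities gives $N(L^{\sharp},[0,\mu])=O(\mu^{d/2})$, hence $N(P^{\sharp}_\hsc,[-C,\lambda])=O(\hsc^{-d}\lambda^{d/2})$, i.e.\ \eqref{eq:gro} with $d^{\sharp}=d$.

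I expect everything here except one point to be routine form theory, Rellich compactness, and a one-line variational Weyl estimate. The genuinely delicate step is the boundary-free elliptic regularity needed for \eqref{eq:bbreq1}: an element of $\cD_D$ \emph{a priori} only satisfies $\nabla\cdot(A\nabla v)\in L^2$, and one must argue that its restriction to $\R^d\setminus B_{R_0}$ is $H^2$ up to the artificial sphere $\{|x|=R_0\}$ (where no boundary condition holds) and remains in $H^2$ out to infinity, passing through the $C^\infty$ coefficient plateau at $|x|=R_1$ and controlling the decay at infinity via the flat Laplacian. Once that is in place, the remaining verifications are bookkeeping.
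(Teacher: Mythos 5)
Your proof is correct, and it is genuinely more self-contained than the paper's: where the paper disposes of the lemma by appealing to the non-semiclassical $c=1$ case in \cite[Lemma 2.1]{LaSpWu:20}, then handles general $c$ by a comparison of counting functions citing \cite[Lemmas B.1, B.2]{LaSpWu:20}, you prove everything from scratch. Your verifications of \eqref{eq:bbdec}--\eqref{eq:bbreq2} via the Friedrichs-extension description of $\cD_D$, the cutoff-plus-elliptic-regularity argument for $\mathbf 1_{\R^d\setminus B_{R_0}}\cD_D\subset H^2$, and Rellich compactness are the same underlying computations the cited lemma carries out; nothing is circular. Your treatment of \eqref{eq:gro} is, if anything, tidier than the citation chain: you bound $\mu_j \geq A_{\min}c_{\min}^2\,\nu_j$ by the min--max estimate in the $c^{-2}$-weighted norm and then $\nu_j \geq \lambda_j$ by extension-by-zero domain monotonicity, landing directly on the flat torus Laplacian whose eigenvalue count is explicit. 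This is exactly the comparison-principle strategy that the paper's own Appendix~\ref{app:gro} uses for the transmission problem, so your argument is in the paper's spirit even though the paper does not spell it out for the Dirichlet case. The two cosmetic points worth tightening: (i) since you invoke $\obstacle_-$, $A$, $c$ ``as in Theorem~\ref{thm:Dirichlet}'', you in fact have $A,c\in C^\infty$, which is what the black-box condition $a_{\hsc,\alpha}\in C^\infty(\R^d)$ requires --- so the aside ``since $A\in C^{0,1}$'' in the \eqref{eq:bbreq2a} verification should read $C^\infty$ (the $C^{0,1}$ of Definition~\ref{def:EDP} is not the operative regularity here); and (ii) $Q_\hsc$ must be a differential operator on \emph{all} of $\R^d$, so one should note that $A$ and $c$, which are given only on $\obstacle_+$, admit smooth extensions into $B_{R_0}$ (trivially, because the extension inside the ``artificial'' region $B_{R_0}$ is irrelevant to \eqref{eq:bbreq1}). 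Neither of these is a gap in substance; the proof goes through.
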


\begin{proof}
The non-semiclassically-scaled version of this lemma
with Lipschitz $\Omega_-$ and $A_{\rm scat}$ and $c\in L^\infty$ and domain 
\beq\label{eq:domain_temp}
\Big\{ v\in H^1(\obstacle_+), \, \nabla\cdot \big(A_{\rm scat}\nabla v\big) \in L^2(\obstacle_+), \, v=0 \ton \partial \obstacle_+\Big\}
\eeq
is proved for $c=1$ in \cite[Lemma 2.1]{LaSpWu:20}. The proof of (\ref{eq:bbreq1}), (\ref{eq:bbreq2a}), and  (\ref{eq:bbreq2}) is essentially
the same in the present semiclassically-scaled setting. The bound (\ref{eq:gro}) follows from comparing the counting function for $P^\sharp_\hsc$ to the counting function for the problem with $c=1$ by a similar argument to \cite[Lemma B.2]{LaSpWu:20}/Appendix \ref{app:gro}, and then using the result for the problem with $c=1$ proven in \cite[Lemma B.1]{LaSpWu:20}.
Finally, by elliptic regularity, the domain \eqref{eq:domain_temp} equals $H^2(\obstacle_+)\cap H^1_0(\obstacle_+)$ since $\Omega_-$ and $A_{\rm scat}$ are smooth in Definition \ref{def:EDP}.
 \end{proof}

\begin{lemma}\mythmname{Scattering by a penetrable Lipschitz obstacle fits in the black-box framework}\label{lem:transmission}
Let $\obstacle_-, A,$ $c$, $\beta$, and $R_0$ be as in Definition \ref{def:transmission}.
Let $\nu$ be the unit normal vector field on $\partial \obstacle_-$  pointing from $\obstacle_-$ into $\obstacle_+$, and let 
$\partial_{\nu,A}$ the corresponding conormal derivative from either $\obstacle_-$ or $\obstacle_+$.
Let
\beqs
\mathcal{H}_{R_{0}}=L^{2}\big(\mathcal{O}_-,c(x)^{-2}\beta^{-1}\rd x\big)\oplus L^{2}\big(B_{R_0}\backslash\overline{\mathcal{O}_-}\big),
\eeqs
so that 
\beqs
\mathcal{H}= L^{2}\big(\mathcal{O}_-;c(x)^{-2}\beta^{-1}\rd x\big)\oplus L^{2}\big(B_{R_0}\backslash\overline{\mathcal{O}_-}\big) \oplus L^{2}\big(\mathbb{R}^{d}\backslash B_{R_0}\big).
\eeqs
Let
\begin{align}\nonumber
\cD :=&\Big\{ v= (v_1,v_2,v_3) \quad\text{ where }\quad v_1 \in 
H^1(\obstacle_-), \quad\nabla\cdot(A_-\nabla v_1) \in L^2(\obstacle_-),
\\ \nonumber
&\quad v_2 \in
H^1\big(B_{R_0}\setminus \overline{\obstacle_-}\big), \quad\nabla\cdot(A_+\nabla v_2)\big)\in 
L^2\big(B_{R_0}\setminus \overline{\obstacle_-}\big),\\ \nonumber
& \quad
 v_3 
 \in H^1\big(\Rea^d\setminus \overline{B_{R_0}}\big),\quad\Delta v_3 \in L^2\big(\Rea^d\setminus \overline{B_{R_0}}\big),
 \\ \nonumber
 & \quad  v_1 = v_2\quad\tand\quad \partial_{\nu,A_-} v_1 = \beta\, \partial_{\nu,A_+} v_2 \quad\text{ on } \partial \obstacle_-, \tand\\ 
 & \quad  v_2 = v_3\quad\tand \quad \partial_{\nu} v_2 = \partial_{\nu} v_3\quad \text{ on } \partial B_{R_0} \qquad \Big\}
 \label{eq:domain_transmission}
\end{align}
(observe that the conditions on $v_2$ and $v_3$ on $\partial B_{R_0}$ in the definition of $\cD$ are such that $(v_2,v_3) \in H^1(\Rea^d\setminus \overline{\obstacle_-})$ and $ \nabla\cdot(A_+\nabla(v_2,v_3))\in L^2(\Rea^d\setminus \overline{\obstacle_-})$
). 
Then the family of operators 
\beqs
P_\hsc v:=- \hsc^{2}\Big(c^{2}\nabla\cdot(A_-\nabla  v_{1}),\nabla\cdot(A_+\nabla v_{2}),\Delta v_{3}\Big),
\eeqs
defined for $v=(v_1,v_2,v_3)$, is a semiclassical black-box operator 
(in the sense of Definition \ref{def:bb}) on $\mathcal H$, with $Q_\hsc = -\hsc^2\Delta$, and any $R_1>R_0$.
Furthermore, the corresponding reference operator $P^{\sharp}_\hsc$ satisfies \eqref{eq:gro} with $d^{\sharp}=d$.
\end{lemma}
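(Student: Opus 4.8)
\emph{Proof proposal.} The plan is to realise $P_\hsc$ as the self-adjoint operator attached to an explicit sesquilinear form, and then verify the five black-box axioms (BB1)--(BB5) of Definition \ref{def:bb} in turn. Introduce the non-negative symmetric form
\[
b(u,v):= \hsc^2\Big[\alpha^{-1}\int_{\obstacle_-} (A_-\nabla u_1)\cdot\overline{\nabla v_1} + \int_{B_{R_0}\setminus\overline{\obstacle_-}} (A_+\nabla u_2)\cdot\overline{\nabla v_2} + \int_{\Rea^d\setminus\overline{B_{R_0}}}\nabla u_3\cdot\overline{\nabla v_3}\Big]
\]
with form domain $V$ the set of $v=(v_1,v_2,v_3)$ with each $v_j$ in $H^1$ of the relevant region and $\gamma v_1=\gamma v_2$ on $\partial\obstacle_-$, $\gamma v_2=\gamma v_3$ on $\partial B_{R_0}$. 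Since $A_\pm$ are symmetric, real, and uniformly elliptic and $c,\alpha$ are bounded above and below, $b$ is symmetric, non-negative, and closed on $\cH$ (its form norm being equivalent to a weighted $H^1$ norm), so by the first representation theorem it is the form of a unique non-negative self-adjoint operator $\widetilde P_\hsc$ with form domain $V$. The next step is to identify $\cD(\widetilde P_\hsc)$ with the set $\cD$ of \eqref{eq:domain_transmission} and to check $\widetilde P_\hsc=P_\hsc$: if $u\in V$ and $b(u,\cdot)=\langle w,\cdot\rangle_\cH$ for some $w\in\cH$, then testing against $v$ supported in one region at a time and integrating by parts via $-\int_D\nabla\cdot(A\nabla u)\,\overline v=\int_D A\nabla u\cdot\overline{\nabla v}-\langle\partial_{\nu,A}u,\gamma v\rangle_{\partial D}$ (legitimate with conormal derivatives understood as $H^{-1/2}$ functionals) forces $\nabla\cdot(A_-\nabla u_1)\in L^2(\obstacle_-)$, $\nabla\cdot(A_+\nabla u_2)\in L^2(B_{R_0}\setminus\overline{\obstacle_-})$, $\Delta u_3\in L^2(\Rea^d\setminus\overline{B_{R_0}})$ and $w=P_\hsc u$; then testing against general $v\in V$ produces the interface identities $\partial_{\nu,A_-}u_1=\alpha\,\partial_{\nu,A_+}u_2$ on $\partial\obstacle_-$ and $\partial_\nu u_2=\partial_\nu u_3$ on $\partial B_{R_0}$ (here one uses $A_+=I$ near $\partial B_{R_0}$, since $\supp(I-A)\subset B_{R_0}$). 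Thus $P_\hsc=\widetilde P_\hsc$ is self-adjoint with the stated $\hsc$-independent domain; the weight $c^{-2}\alpha^{-1}$ on $\obstacle_-$ and the factor $\alpha$ in the transmission condition are exactly what makes the interface boundary terms cancel in this computation.

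It then remains to verify the axioms. (BB1) is immediate, with $\omega\equiv 1$ on $\Rea^d\setminus B_{R_0}$. For (BB2): $A_+=I$ and the weight is $1$ outside $B_{R_0}$, so $\mathbf 1_{\Rea^d\setminus B_{R_0}}(P_\hsc u)=-\hsc^2\Delta u_3=Q_\hsc(u|_{\Rea^d\setminus B_{R_0}})$ with $Q_\hsc=-\hsc^2\Delta$ for any $R_1>R_0$; and $\mathbf 1_{\Rea^d\setminus B_{R_0}}\cD\subset H^2(\Rea^d\setminus B_{R_0})$ because there $u_3$ solves $-\Delta u_3=g_3\in L^2$ with Dirichlet data $\gamma u_2\in H^{1/2}(\partial B_{R_0})$ on the \emph{smooth} sphere $\partial B_{R_0}$, so elliptic regularity applies. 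For (BB3): an $H^2(\Rea^d)$ function vanishing on $B_{R_0+\eps}$ has $v_1=v_2=0$ and $v_3\in H^2(\Rea^d\setminus\overline{B_{R_0}})$, and all interface conditions hold trivially, so it lies in $\cD$. For (BB4): $(P_\hsc+\ri)^{-1}$ maps $\cH$ boundedly into $\cD$ with the graph norm, and, from $b(u,u)=\Real\langle P_\hsc u,u\rangle\le\N{P_\hsc u}_\cH\N{u}_\cH$ and ellipticity of $A$, the graph norm controls $\N{u_1}_{H^1(\obstacle_-)}+\N{u_2}_{H^1(B_{R_0}\setminus\overline{\obstacle_-})}$; since $\obstacle_-$ and $B_{R_0}\setminus\overline{\obstacle_-}$ are bounded Lipschitz, Rellich--Kondrachov makes $\mathbf 1_{B_{R_0}}(P_\hsc+\ri)^{-1}$ compact.

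For (BB5) with $d^\sharp=d$, note that the reference operator $P^\sharp_\hsc$ is, by the same form computation, the self-adjoint operator on $\cH^\sharp$ attached to the torus analogue $b^\sharp$ of $b$ (with $\Rea^d\setminus\overline{B_{R_0}}$ replaced by $\mathbb T^d_{R_\sharp}\setminus\overline{B_{R_0}}$), whose form domain is a closed subspace of $H^1(\mathbb T^d_{R_\sharp})$ on which $b^\sharp(u,u)$ and $\N{u}_{\cH^\sharp}^2$ are bounded above and below by fixed multiples of $\hsc^2\N{\nabla u}_{L^2(\mathbb T^d_{R_\sharp})}^2$ and $\N{u}_{L^2(\mathbb T^d_{R_\sharp})}^2$ (the multiples depending only on $A_{\min}$, $A_{\max}$, $c_{\min}$, $c_{\max}$, $\alpha$). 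The min--max principle then gives $\lambda_j(P^\sharp_\hsc)\ge c\,\lambda_j(-\hsc^2\Delta_{\mathbb T^d_{R_\sharp}})$ for all $j$ and a fixed $c>0$, and since the eigenvalues of $-\hsc^2\Delta$ on $\mathbb T^d_{R_\sharp}$ are $(\pi\hsc/R_\sharp)^2|\xi|^2$, $\xi\in\ZZ^d$, with counting function $O(\hsc^{-d}\lambda^{d/2})$, \eqref{eq:gro} follows with $d^\sharp=d$; making this comparison precise (equivalently, reducing first to the case $A=I$, $c=1$, $\alpha=1$, where $P^\sharp_\hsc=-\hsc^2\Delta_{\mathbb T^d_{R_\sharp}}$) is the content of the subsidiary result proved in Appendix \ref{app:gro}.

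The step I expect to be the main obstacle is the identification $\cD(\widetilde P_\hsc)=\cD$ under only Lipschitz regularity of the coefficients $A_\pm$ and of the interface $\partial\obstacle_-$: the conormal derivatives must be handled purely as $H^{-1/2}$ objects and the weights tracked carefully so that the contributions on $\partial\obstacle_-$ and $\partial B_{R_0}$ cancel exactly. The remaining verifications are routine elliptic regularity, Rellich compactness, and min--max.
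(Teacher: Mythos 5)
Your proposal is correct and follows essentially the same route as the paper's. The paper dispatches axioms (BB1)--(BB4) by citing the non-semiclassical analogue in \cite[Lemma 2.3]{LaSpWu:20}, which rests on exactly the form-theoretic self-adjointness, elliptic regularity, and Rellich compactness that you reconstruct; and your verification of (BB5) --- Green's identity to pass to the quadratic form, the observation that the trace-matching conditions place the form domain inside $H^1(\mathbb T^d_{R_\sharp})$, and a min--max comparison with $-\hsc^2\Delta_{\mathbb T^d_{R_\sharp}}$ --- is precisely the argument the paper carries out in Appendix~\ref{app:gro}.
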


\begin{proof}
The non-semiclassically-scaled version of this lemma was proved for $c=1$ in \cite[Lemma 2.3]{LaSpWu:20}. The proof of (\ref{eq:bbreq1}), (\ref{eq:bbreq2a}), and  (\ref{eq:bbreq2}) is essentially
the same in the present semiclassically-scaled setting. The proof of the bound (\ref{eq:gro}) 
is similar to the the analogous proof for $c=1$ and $A$ Lipschitz in \cite[Lemma B.1]{LaSpWu:20}; for completeness we include the proof in \S\ref{app:gro}.
 \end{proof}

\bre
Lemma \ref{lem:obstacle} has the obstacle $\obstacle_-$ in the black box (i.e., in $B_{R_0}$) but not all the variation of the coefficients $A$ and $c$ (which are contained in $B_{R_1} \supset B_{R_0}$). In contrast, Lemma \ref{lem:transmission} has \emph{both} the obstacle $\obstacle_-$ \emph{and} all the variation of the coefficients $A$ and $c$ in the black box. The transmission problem also fits in the black-box framework with some of the variation of the coefficients outside the black box (i.e., in $B_{R_1}$), but we do not need this formulation to prove Theorem \ref{thm:transmission}.
\ere

\subsection{A black-box functional calculus for $P^{\sharp}_\hsc$}\label{BBFC}

The operator $P^\sharp_\hsc$ on the torus with domain
$\mathcal{D}^\sharp$ is self-adjoint with compact resolvent
\cite[Lemma 4.11]{DyZw:19}, hence we can describe the Borel functional
calculus \cite[Theorem VIII.6]{ReeSim72} for this operator explicitly in terms of the orthonormal
basis of eigenfunctions $\phi^\sharp_j \in \hilbert^\sharp$ 
(with
eigenvalues $\lambda_j^\sharp$, appearing with multiplicity and depending on $\hsc$):~for $f$ a real-valued Borel function on $\RR,$
$f(P_\hsc^\sharp)$ is self-adjoint with domain
\begin{equation*}
\domain_f:=\bigg \{\sum a_j \phi_j^\sharp \in \cH^\sharp
\,\,: \,\,
\sum \big \lvert f(\lambda^\sharp_j)
a_j \big \rvert^2 <\infty\bigg\},
\end{equation*}
and if $v =\sum a_j \phi_j^\sharp\in \domain_f$ then
\begin{equation} \label{eq:deffP}
f(P_\hsc^\sharp)(v):=\sum a_j f(\lambda^\sharp_j) \phi^\sharp_j.
\end{equation}
For $f$ a bounded Borel function, $f(P^\sharp)$ is a bounded
operator, hence in this case we can dispense with the definition of
the domain and allow $f$ to be complex-valued.

For $m \geq 1$, we then define
$\mathcal D_\hsc^{\sharp,m}$ as the domain of $(P^\sharp_\hsc)^m$ equipped with the norm 
\begin{equation} \label{eq:BB:norms}
\Vert v \Vert_{\mathcal D_\hsc^{\sharp, m}} :=  \Vert  v\Vert_{\mathcal H^{\sharp}} + \Vert (P^{\sharp}_{\hsc})^m  v\Vert_{\mathcal H^{\sharp}},
\end{equation}
and $\mathcal D_\hsc^{\sharp,-m}$ as its dual (note that, in the exterior of the black box, the regularity imposed in the definition of $\mathcal D_\hsc^{\sharp,m}$ is that of periodic functions on the torus with $2m$ derivatives in $L^2$).
We define also the partial norms, for $m>0$, 
$\Vert v \Vert_{\mathcal D_\hsc^{\sharp, m}(B)} :=  \Vert  v\Vert_{\mathcal H^{\sharp}(B)} + \Vert (P^{\sharp}_{\hsc})^m  v\Vert_{\mathcal H^{\sharp}(B)}$, where $B = B_r$ or $B=B_r^c$ with $R_0 \leq r \leq R_\sharp$. In addition, we let
\beq\label{eq:Dinfty}
\mathcal D_\hsc^{\sharp,\infty} := \bigcap_{m\geq0} \mathcal D_\hsc^{\sharp,m},
\eeq
so that $v \in \mathcal{D}_\hsc^{\sharp,\infty}$ iff $(P_\hsc^{\sharp})^m v \in \mathcal D_\hsc^\sharp$ for all $m\in \mathbb{Z}^+$.

The following theorem is proved in \cite[Pages 23 and 24]{Da:96}; see also \cite[Theorem VIII.5]{ReeSim72}.

\begin{theorem} \label{thm:fundfc}
The Borel functional calculus enjoys the following properties.
  \begin{enumerate}
\item $f\rightarrow f(P^{\sharp}_{\hsc})$ is a $\star$-algebra homomorphism. 
\label{it:fc3}
\item for $z \notin \mathbb R$, 
if $r_z(w):=(w-z)^{-1}$ then $r_z(P^\sharp)= (P^{\sharp}_{\hsc} - z)^{-1}$. \label{it:fc4}
\item If $f$ is bounded, $f(P^\sharp_\hsc)$ is a bounded operator
  for all $\hsc$, with $\Vert f(P^{\sharp}_{\hsc}) \Vert_{\mathcal L(\mathcal H^{\sharp})} \leq \sup_{\lambda \in \mathbb R}|f(\lambda)|$. \label{it:fc5}
\item If $f$ has disjoint support from $\operatorname{Sp}P_\hsc^{\sharp}$, then $f(P^{\sharp}_{\hsc}) = 0$. \label{it:fc6}
\end{enumerate}
\end{theorem}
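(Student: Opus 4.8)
The plan is to exploit that $P^{\sharp}_{\hsc}$ has compact resolvent \cite[Lemma 4.11]{DyZw:19}, so that its spectrum is the discrete set $\{\lambda^\sharp_j\}_j$ and $\mathcal H^\sharp$ admits the orthonormal eigenbasis $\{\phi^\sharp_j\}_j$; every assertion then follows by reading off the explicit series formulas \eqref{eq:defdomainfP}--\eqref{eq:deffP} and reducing it to an elementary statement about the scalar sequence $\big(f(\lambda^\sharp_j)\big)_j$. (One could alternatively just cite \cite{Da:96} or \cite{ReeSim72}, but in the present discrete-spectrum situation this short self-contained verification is available.) Throughout, $\hsc$ is fixed.

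First I would dispatch items \ref{it:fc5} and \ref{it:fc6}. For \ref{it:fc5}, if $C_f := \sup_{\lambda \in \mathbb R}|f(\lambda)| < \infty$ then $\sum_j |f(\lambda^\sharp_j) a_j|^2 \leq C_f^2 \sum_j |a_j|^2$, so $\mathcal D_f = \mathcal H^\sharp$ by \eqref{eq:defdomainfP} and $\|f(P^\sharp_\hsc)v\|_{\mathcal H^\sharp} \leq C_f \|v\|_{\mathcal H^\sharp}$ by \eqref{eq:deffP}; the same formula makes sense of $f(P^\sharp_\hsc)$ for complex-valued bounded $f$. For \ref{it:fc6}, if $\operatorname{supp} f$ is disjoint from $\operatorname{Sp}P^\sharp_\hsc$ then $f(\lambda^\sharp_j)=0$ for every $j$ since each $\lambda^\sharp_j \in \operatorname{Sp}P^\sharp_\hsc$, whence $f(P^\sharp_\hsc)v = \sum_j a_j f(\lambda^\sharp_j)\phi^\sharp_j = 0$.

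Next, for item \ref{it:fc4}, fix $z \notin \mathbb R$. Since $\lambda^\sharp_j \in \mathbb R$ we have $|\lambda^\sharp_j - z| \geq |\operatorname{Im} z| > 0$, so $r_z$ is bounded and $r_z(P^\sharp_\hsc)$ is a bounded operator by \ref{it:fc5}. Because the sequence $\big(\lambda^\sharp_j(\lambda^\sharp_j - z)^{-1}\big)_j$ is bounded, $r_z(P^\sharp_\hsc)v = \sum_j a_j(\lambda^\sharp_j - z)^{-1}\phi^\sharp_j$ satisfies $\sum_j |\lambda^\sharp_j(\lambda^\sharp_j - z)^{-1}a_j|^2 < \infty$, i.e.\ $r_z(P^\sharp_\hsc)\mathcal H^\sharp \subset \mathcal D^\sharp$, the domain of $P^\sharp_\hsc$. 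A termwise computation with \eqref{eq:deffP} then gives $(P^\sharp_\hsc - z) r_z(P^\sharp_\hsc) v = \sum_j a_j \phi^\sharp_j = v$ and likewise $r_z(P^\sharp_\hsc)(P^\sharp_\hsc - z)v = v$ for $v \in \mathcal D^\sharp$; since $P^\sharp_\hsc - z$ is bijective from $\mathcal D^\sharp$ onto $\mathcal H^\sharp$, this identifies $r_z(P^\sharp_\hsc) = (P^\sharp_\hsc - z)^{-1}$.

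Finally, item \ref{it:fc3} is pure bookkeeping with \eqref{eq:deffP}: linearity $(\alpha f + \beta g)(P^\sharp_\hsc) = \alpha f(P^\sharp_\hsc) + \beta g(P^\sharp_\hsc)$ and the product rule $(fg)(P^\sharp_\hsc) = f(P^\sharp_\hsc)g(P^\sharp_\hsc)$ each reduce, coefficient by coefficient, to the scalar identities $(\alpha f + \beta g)(\lambda^\sharp_j) = \alpha f(\lambda^\sharp_j) + \beta g(\lambda^\sharp_j)$ and $(fg)(\lambda^\sharp_j) = f(\lambda^\sharp_j)g(\lambda^\sharp_j)$, and the $\star$-property follows from $\langle f(P^\sharp_\hsc)v, w\rangle_{\mathcal H^\sharp} = \sum_j a_j f(\lambda^\sharp_j)\overline{b_j} = \langle v, \overline f(P^\sharp_\hsc)w\rangle_{\mathcal H^\sharp}$, giving $f(P^\sharp_\hsc)^* = \overline f(P^\sharp_\hsc)$. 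The only genuine subtlety is matching the (maximal) domains in the product rule for unbounded $f,g$, handled by the standard completeness argument; however, in every application of this theorem in the paper $f$ and $g$ are bounded, indeed compactly supported, so $\mathcal D_f = \mathcal D_g = \mathcal H^\sharp$ and all the identities are between bounded operators, with nothing to check. Accordingly I do not expect any real obstacle here: the whole argument rests only on the discreteness of $\operatorname{Sp}P^\sharp_\hsc$.
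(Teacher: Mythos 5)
Your proof is correct. The paper itself does not prove this theorem: it simply cites \cite[Pages 23--24]{Da:96} and \cite[Theorem VIII.5]{ReeSim72}, i.e.\ it invokes the general Borel functional calculus for self-adjoint operators. What you do instead is give a short self-contained verification that exploits the compact resolvent of $P^\sharp_\hsc$ \cite[Lemma 4.11]{DyZw:19} and the explicit eigenfunction-series description \eqref{eq:defdomainfP}--\eqref{eq:deffP}, which the paper records immediately before the theorem but then does not use for the proof. Each of the four items indeed reduces to an elementary statement about the scalar sequence $\big(f(\lambda^\sharp_j)\big)_j$: your bounds for items \ref{it:fc5} and \ref{it:fc6} are immediate; for item \ref{it:fc4} the key observations that $|\lambda^\sharp_j - z|\geq |\operatorname{Im}z|$ and that $\lambda(\lambda-z)^{-1} = 1 + z(\lambda-z)^{-1}$ is bounded on $\mathbb R$ correctly place the range of $r_z(P^\sharp_\hsc)$ in the domain and justify the termwise resolvent identity; and for item \ref{it:fc3} you rightly flag that the only delicate point (domains in the product rule for unbounded $f,g$) is vacuous in the paper's applications, where $f$ and $g$ are bounded or compactly supported. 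The trade-off is the expected one: the citation route is valid for arbitrary self-adjoint operators and requires no spectral discreteness, whereas your argument is shorter, entirely self-contained, and tied to the specific structure of $P^\sharp_\hsc$ that the paper has already established; either is acceptable here.
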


In describing the \emph{structure} of the operators produced by the
functional calculus, at least for well-behaved functions $f,$ it is
useful to recall the Helffer--Sj\"ostrand construction of the
functional calculus \cite{HeSj:89}, \cite[\S2.2]{Da:96} (which can also be used to prove the spectral
theorem to begin with; see \cite{Da:95}).

We say that $f \in \mathcal A$ if $f \in C^\infty(\mathbb R)$ and
there exists $\beta < 0$, such that, for all $r>0$, there exists  $C_r>0$ such that $|f^{(r)}(x)| \leq C_r (1+|x|^2)^{(\beta - r)/2}$.

Let $\tau \in C^\infty(\mathbb R)$ be such that $\tau(s) = 1$ for $|s|\leq 1$ and $\tau(s) = 0$ for $|s|\geq 2$. 
Finally, let $\frak n \geq 1$.
We define an $\frak n$-almost-analytic
extension of $f$, denoted by $\widetilde f$, by
$$
\widetilde f(z) :=\left( \sum_{m=0}^{\frak n} \frac{1}{m!} \big(\partial^m f(\operatorname{Re}z)\big)\,(\ri\operatorname{Im}z)^m \right) \tau\left(\frac{\operatorname{Im}z}{\langle\operatorname{Re}z\rangle}\right)
$$
(observe that $\widetilde{f}(z)=f(z)$ if $z$ is real).
For $f\in \mathcal A$, we define
\beq\label{eq:HS1}
f(P_{\hsc}^{\sharp}) := - \frac 1 \pi \int_{\mathbb C} \frac{\partial \widetilde f}{\partial \bar z}(P^{\sharp}_{\hsc} - z)^{-1} \; \rd x\rd y,
\eeq
where $\rd x\rd y$ is the Lebesgue measure on $\mathbb C$. The integral on
the right-hand side of \eqref{eq:HS1} converges; see, e.g., \cite[Lemma 1]{Da:95}, \cite[Lemma 2.2.1]{Da:96}.  This definition can be shown to be independent of the choices of
$\frak n$ and $\tau,$ and to agree with the operators defined by the
Borel functional calculus for $f \in \mathcal{A}$; see \cite[Theorems 2-5]{Da:95}, \cite[Lemmas 2.2.4-2.2.7]{Da:96}. 

 When $P$ is a
self-adjoint elliptic semiclassical differential operator on a compact manifold,
the Helffer--Sj\"ostrand construction can be used to show that $f(P)$ is a
pseudodifferential operator \cite{HeSj:89}.  Here, in the presence of
a black box, it can instead be used to show that, modulo residual
errors, $f(P^\sharp_\hsc)$ agrees with $f(Q_\hsc)$ on the region of the
torus outside the black box, with the latter being a
  pseudodifferential operator. Furthermore, the operator wavefront set
of $f(Q_\hsc)$ can be seen to be included in $q_\hsc^{-1}(\supp f)$. 
We now state these results, obtained originally in \cite{Sj:97}. 

We say that $E_\infty \in \mathcal L(\mathcal H^{\sharp})$ is $O(\hsc^\infty)_{\mathcal D_\hsc^{\sharp,-\infty} \rightarrow \mathcal D_\hsc^{\sharp,\infty}}$ if, for any
$N>0$ and any $m>0$, there exists $C_{N,m}>0$ such that
\begin{equation}\label{eq:BBresidual}
\Vert E_\infty \Vert _{\mathcal D_\hsc^{\sharp,-m} \rightarrow \mathcal D_\hsc^{\sharp,m}} \leq C_{N,m} \hsc^N
\end{equation}
(compare to \eqref{eq:residual} below).
Operators in the functional calculus are pseudo-local in the following sense.

\begin{lemma}
\label{thm:funcloc1}
Suppose $f\in \mathcal{A}$ is independent of $\hsc$, and $\psi_1, \psi_2 \in C^\infty(\mathbb T_{R_{\sharp}}^d)$ are constant
near $B_{R_0}$. If $\psi_1$ and $\psi_2$ have disjoint supports, then
\beq\label{eq:pseudolocal1}
\psi_1 f(P^{\sharp}_{\hsc}) \psi_2= O(\hsc^\infty)_{\mathcal D_\hsc^{\sharp,-\infty} \rightarrow \mathcal D_\hsc^{\sharp,\infty}}.
\eeq
\end{lemma}


\bpf
In the usual case of a smooth manifold with boundary, this result
follows from the fact that $f(P^{\sharp}_{\hsc})$ is a
pseudodifferential operator, and hence pseudo-local.  Here, it follows from combining the corresponding result about the
resolvent \cite[Lemma 4.1]{Sj:97} (i.e., \eqref{eq:pseudolocal1} with
$f(w):= (w-z)^{-1})$) with \eqref{eq:HS1} and then integrating (as
discussing in a slightly different context in \cite[Paragraph after
proof of Lemma 4.2]{Sj:97}).
\epf

\

Furthermore, we can show from \cite[\S4]{Sj:97} that, modulo a negligible term,  away from the black-box the functional calculus is given by the semiclassical pseudodifferential calculus in the following sense. 
The following lemma uses the notion of semiclassical pseudodifferential operators on $\mathbb
T^d_{R_{\sharp}}$ (including the concept of the \emph{operator wavefront set} $\operatorname{WF}_\hsc$), recapped in Appendix \ref{app:sct}.

\begin{lemma} \label{thm:funcloc2}
Suppose $f\in C^\infty_{\rm comp}(\mathbb R)$ is independent of $\hsc$. If $\chi\in C^\infty(\mathbb T_{R_{\sharp}}^d)$ is equal to zero near $B_{R_0}$, then,
\beq\label{july12}
\chi f(P^{\sharp}_\hsc) \chi = \chi f(Q_\hsc) \chi + O(\hsc^\infty)_{\mathcal D_\hsc^{\sharp,-\infty} \rightarrow \mathcal D_\hsc^{\sharp,\infty}}.
\eeq
Furthermore,
$f(Q_\hsc)\in \Psi^{-\infty}_\hsc(\mathbb T^d_{R_\sharp})$ with 
\begin{equation}\label{fcsymbol}
\sigma_{\hsc}(f(Q_\hsc)) = f(q_\hsc)
  \end{equation}
and
\beq\label{microsupport}
\operatorname{WF}_\hsc f(Q_\hsc)  \subset q_\hsc^{-1}(\supp f).
\eeq
If, instead, $f \in C^\infty(\RR)$ is identically equal to
  $1$ near $+\infty,$ then
  $f(Q_\hsc)\in \Psi^{0}_\hsc(\mathbb T^d_{R_\sharp})$ and
  \eqref{july12}, \eqref{fcsymbol}, \eqref{microsupport} continue to hold.
\end{lemma}
(Here we are adopting the convention that if $\rho_0=(x_0,
  \zeta_0) \in \Tbar^*\torus_{R_\sharp}^d$ lies at fiber-infinity (see the section ``Phase space" in Appendix \ref{app:sct}), then the
  notion of support is to be interpreted in the following generalized sense:
  $q_\hsc(\rho_0)=+\infty$ and this is in $\supp f$ if $f=1$ near $+\infty.$)

\

\begin{proof}
First, assume $f$ has compact support.  By \cite[Lemma 4.2 and the subsequent two paragraphs]{Sj:97},
$$
\chi f(P^{\sharp}_\hsc) \chi = \chi f(Q_\hsc) \chi + O(\hsc^\infty)_{\mathcal D_\hsc^{\sharp,-\infty} \rightarrow \mathcal D_\hsc^{\sharp,\infty}}.
$$
The results of Helffer-Robert \cite{HeRo:83} (see the account in
\cite{Rob87} and in particular Remarques III-14 for verification
  of the hypotheses on $f$) imply that for $f$ compactly supported,
  $f(Q_\hsc) \in \Psi_\hsc^{-\infty},$ with principal symbol
  $f(q_\hsc)$.

  That the analogous statements
  hold for $f=1$ near $+\infty$ instead simply follows by noting that
  for such a function $f$, $g(s)=1-f(s)$ is zero for $s>C$ for some
  $C.$  Then $f(Q_\hsc)=I-g(Q_\hsc)$; since $Q_\hsc$ is bounded
  below, we may assume without loss of generality that $g$ is
  compactly supported Thus the previous results show that
  \eqref{july12}, \eqref{fcsymbol} hold for $g(Q_\hsc)$, which is in
  $\Psi_\hsc^{-\infty}.$  We thus obtain \eqref{july12}, \eqref{fcsymbol} for
  $f(Q_\hsc),$ which lies in $\Psi_\hsc^0$
  with symbol $f(q_\hsc),$ hence we have established \eqref{july12},
  \eqref{fcsymbol} under either of our hypotheses on $f.$ 

It remains to show that
  $\operatorname{WF}_\hsc f(Q_\hsc) \subset
  q_\hsc^{-1}(\operatorname{supp} f)$.  To this end, pick any $\rho_0
  \notin  q_\hsc^{-1}(\operatorname{supp} f);$ we aim to show $\rho_0
  \notin \WFh f(Q_\hsc).$ There exists a
  smooth function $g$ on $\RR$ with $g(q_\hsc(\rho_0))=1$ and $\supp\, g \cap
  \supp\, f=\emptyset.$  We may take $g$ to be either compactly
  supported (if $\rho_0$ is in $T^*\torus_{R_\sharp}^d$) or equal to $1$ near $+\infty$ (if
  $\rho_0$ is at fiber-infinity).  Then by Part \ref{it:fc3} of Theorem \ref{thm:fundfc}
\beq\label{eq:fg}
f(Q_\hsc) g(Q_\hsc)=g(Q_\hsc) f(Q_\hsc)=0
\eeq
(the Borel calculus is a homomorphism).
Since $\sigma_\hsc(g(Q_\hsc))=1$ by \eqref{fcsymbol}, $g(Q_\hsc)$ is
elliptic at $\rho_0.$

Now pick $b \in C^\infty (\Tbar^*\mathbb T^d_{R_\sharp})$ equal to $1$
in a small neighbourhood of $\rho_0$ and supported on the elliptic set
of $g(Q_\hsc).$ Thus, writing $B=\Optorus(b),$ $\rho_0 \notin \WFh (I-B)$ and $\WFh B$ lies in
the elliptic set of $g(Q_\hsc)$.  Then by Theorem ~\ref{thm:para}, we may factor
$$
B=Zg(Q_\hsc)+R 
$$
with $Z \in \Psi_\hsc^0$ and $\rho_0 \notin \WFh R$ (by \eqref{eq:WFempt}).  Now write
\begin{align*}
  f(Q_\hsc) &= B f(Q_\hsc)+(I-B)  f(Q_\hsc)\\
  &=  Z g(Q_\hsc) f(Q_\hsc)+R f(Q_\hsc)+(I-B)  f(Q_\hsc),
\end{align*}
The first term on the right-hand side is zero by \eqref{eq:fg}.  The point $\rho_0$ is not in the
semiclassical operator wavefront set
of the second term or third terms since it is not in $\WFh R$ or $\WFh(I-B)$
(see \eqref{eq:WFprod}).  Hence by \eqref{eq:WFsum}, $\rho_0 \notin
\WFh f(Q_\hsc)$, as desired.
\end{proof}

\section{Proof of Theorem \ref{thm:mainbb} (the main result in the black-box framework)}\label{sec:blackboxresult}

The decomposition (\ref{eq:maindec}) is defined in \S \ref{subsec:abdec}
(and illustrated schematically in Figures \ref{fig:split} and \ref{fig:split_uL}). The estimates (\ref{eq:decHF}) and  (\ref{eq:decLF1})--(\ref{eq:decLF5})  are proved in \S \ref{subsec:high} and
 \ref{subsec:low} respectively.
 
\subsection{The decomposition} \label{subsec:abdec}

Let $\varphi \in C^\infty_{\rm comp}(\mathbb{R}^{d})$ be equal to one in 
$B_{R}$ and supported in $B_{R_{\sharp}}$. 
 For $v \in \mathcal H$,
we define
$$
M_\varphi v := \varphi v,
$$
where the multiplication is in the sense of \eqref{eq:mult}.
Let $u \in \mathcal D_{\rm out}$ be solution to 
$$
(P_{\hsc} - 1)u = g,
$$
and let 
$$
w := M_\varphi u.
$$
We view $w$ as an element of $\mathcal H^{\sharp}$ and work in the torus $\mathbb T^d_{R_{\sharp}}$.

We now define our frequency cut-offs. By (\ref{eq:propq_ell}), there exists $\widetilde{\mu} > 1$ and $c_{\rm ell}>0$ such that
\beqs
|\xi| \geq \widetilde{\mu} \quad\text{ implies that } \langle \xi \rangle^{-2}(q_\hsc(x, \xi) -1) \geq c_{\rm ell} >0.
\eeqs
Therefore, by \eqref{eq:Qnew}, 
there exists $\mu > 1$ such that 
\begin{equation} \label{eq:newdefmu}
q_\hsc(x,\xi) \geq \mu \quad\text{ implies that } \langle \xi \rangle^{-2}(q_\hsc(x, \xi) -1) \geq c_{\rm ell} >0.
\eeq
We increase $\mu$ further, if necessary, so that
\beq\label{eq:John1}
\big\{ (x,\xi) \, :\, |q_\hsc(x,\xi)|\geq \mu\big\}=\big\{ (x,\xi) \, :\, q_\hsc(x,\xi)\geq \mu\big\}
\eeq
(note that the conditions  imposed on $q_\hsc(x,\xi)$ in \S\ref{subsec:bb} allow it to be $<0$ for some $(x,\xi)$).

Let
$\psi \in C^\infty_{\rm comp}(\mathbb R)$ be such that
\begin{equation} \label{eq:psi}
\psi = 
\begin{cases}
1 \text{ in } B(0,1), \\
0 \text{ in } (B(0, 2))^c.
\end{cases}
\end{equation}
We now fix $1 \leq \mu' \leq \mu/ 2$,
and define
\begin{equation} \label{eq:psimu}
\psi_\mu(\cdot) := \psi\left(\frac{\cdot}{\mu}\right), \qquad \psi_{\mu'}(\cdot) := \psi\left(\frac{\cdot}{\mu'}\right).
\end{equation}
These definitions imply that
\begin{equation} \label{eq:supppsi}
(1- \psi_{\mu'})(1- \psi_{\mu})=(1- \psi_{\mu})
\end{equation}
(since $2\mu' \leq \mu$), and
\begin{equation} \label{eq:proppsimup}
1 \notin \operatorname{supp}(1 - \psi_{\mu'})
\end{equation}
(since $\mu'\geq 1$). Let 
\beq\label{eq:Lambda}
\Lambda := 5 \mu
\eeq
(note that, by \eqref{eq:newdefmu}, both $\mu$ and $\Lambda$ only depend on $q_\hsc$),
and observe that
\begin{equation} \label{eq:propLambda}
\operatorname{supp} \psi_{\mu} \subset [-\Lambda, \Lambda].
\end{equation}

We define, by the Borel functional calculus for $P^{\sharp}_{\hsc}$ (Theorem \ref{thm:fundfc}), in $\mathcal L(\mathcal H^{\sharp})$
\begin{equation}\label{eq:PiL}
\Pilow := \psi_{\mu}(P^{\sharp}_{\hsc}),
\end{equation}
and additionally
\beqs
\Pihigh :=  (1 - \psi_{\mu})(P^{\sharp}_{\hsc}) = I-\Pilow \quad\tand\quad \Pihigh' := (1 - \psi_{\mu'})(P^{\sharp}_{\hsc}).
\eeqs
By (\ref{eq:supppsi}) and the fact the Borel functional calculus is an algebra homomorphism (Part \ref{it:fc3} of Theorem \ref{thm:fundfc}), 
\begin{equation} \label{eq:PiPi}
\Pihigh' \Pihigh = \Pihigh.
\end{equation}
By Part  \ref{it:fc5} of Theorem \ref{thm:fundfc}, the operators $\Pilow, \Pihigh,$ and $\Pihigh'$ are bounded on $\Hilb^{\sharp}$, with
\begin{equation} \label{eq:boundPi}
\Vert \Pilow\Vert_{\mathcal L(\mathcal H^{\sharp})}, \; \Vert \Pihigh\Vert_{\mathcal L(\mathcal H^{\sharp})}, \; \Vert \Pihigh'\Vert_{\mathcal L(\mathcal H^{\sharp})} \leq 1,
\end{equation}
and they commute with $P^{\sharp}_{\hsc}$ by Part \ref{it:fc3} of Theorem \ref{thm:fundfc}.

Since $u \in \mathcal D_{\rm loc}$ (defined by \eqref{eq:Dloc}), the definition of $\mathcal D^\sharp$ (\ref{eq:defdomsharp}), (\ref{eq:bbreq1}), and the fact that $\varphi$ is compactly supported imply that $w \in\mathcal D^\sharp$. 
By the definition of $\psi_\mu$ (\ref{eq:psimu}), (\ref{eq:deffP}), and the fact that $\operatorname{Sp}P_\hsc^\sharp$ is discrete,
 $\Pilow w$ projects non-trivially only on a finite number of eigenspaces of $P^\sharp_\hsc$, and thus   $\Pilow w \in\mathcal D_\hsc^{\sharp, \infty}$. Therefore $\Pihigh w = w - \Pilow w\in \mathcal D^\sharp$.
We now define
\beq\label{eq:newlabel1}
\uH := \Pihigh w \in \mathcal D^{\sharp}, \quad \uL := \Pilow w\in \mathcal D_\hsc^{\sharp, \infty}.
\eeq
We show in \S\ref{subsec:low} below that we can split $\uL$ as
\begin{equation} \label{eq:LF_dec}
\uL = u_{\mathcal A} + u_\epsilon,
\end{equation}
where $u_{\mathcal A} \in\mathcal D_\hsc^{\sharp, \infty}$ satisfies
(\ref{eq:decLF0})--(\ref{eq:decLF4}) (or (\ref{eq:decLF5}) if
$\rho=1$), and that $\uH$ and $u_\epsilon$ satisfy
\begin{equation} \label{eq:HF_eq}
\Vert \uH \Vert_{\mathcal H^{\sharp} } + \big\| P^{\sharp}_{\hsc}\uH\big\|_{\mathcal H^{\sharp} }  \lesssim \Vert g \Vert_{\mathcal H},
\end{equation}
and
\begin{equation} \label{eq:eps_eq}
\Vert u_\epsilon \Vert_{\mathcal H^{\sharp} } + \big\| P^{\sharp}_{\hsc} u_\epsilon\big\|_{\mathcal H^{\sharp} }  \lesssim \Vert g \Vert_{\mathcal H},
\end{equation}
with additionally $u_\epsilon \in \cD_\hsc^{\sharp,\infty}$.
We then define
$$
u_{H^2} := \uH + u_{\epsilon} \in \cD^\sharp,
$$
so that the decomposition (\ref{eq:maindec}), (\ref{eq:decHF}) and (\ref{eq:decLF0})--(\ref{eq:decLF4}) (or (\ref{eq:decLF5}) if $\rho=1$) holds. Our splitting strategy is
summed-up in Figure \ref{fig:split}; with an overview of the splitting of the low-frequency component $\uL$ in Figure \ref{fig:split_uL}.

In \S \ref{subsec:high} we prove the estimate (\ref{eq:HF_eq}) for $\uH$.
In \S\ref{subsec:low} we prove that the decomposition (\ref{eq:LF_dec}) holds, with $u_{\mathcal A}$ satisfying (\ref{eq:decLF0})--(\ref{eq:decLF4}) (or (\ref{eq:decLF5}) if $\rho=1$) and $u_\epsilon$ satisfying (\ref{eq:eps_eq})  We highlight that all the arguments from now on consider $\hsc \in \subsetH$.

\begin{figure}
\hspace{-18em}
 \begin{tikzpicture}%
  [>=stealth,
   shorten >=1pt,
   align = center,
   node distance=3cm and 4.5cm,
   on grid
  ]
\node (1)  {$u$};
\node (2) [below=of 1] {\begin{minipage}{\textwidth}
            \begin{gather*} 
            w := \varphi u \\ 
            \text{\small considered as an element} \\
            \text{\small of the reference torus} 
            \end{gather*}
        \end{minipage}};
\node (31) [below=of 2] {\begin{minipage}{\textwidth}
            \begin{gather*} 
            \uL \\ 
            \text{\small low-frequency part} 
            \end{gather*}
        \end{minipage}};
\node (32) [right=of 31] {\begin{minipage}{\textwidth}
            \begin{gather*} 
            \uH \\ 
            \text{\small high-frequency part} 
            \end{gather*}
        \end{minipage}};
\node (42) [below=of 31] {
\begin{minipage}{\textwidth}
            \begin{gather*} 
            u_{\mathcal A}^{\infty} \\ 
            \text{\small analytic away from $B_{R_0}$} 
            \end{gather*}
        \end{minipage}};
\node (41) [left=of 42] {\begin{minipage}{\textwidth}
            \begin{gather*} 
            u_{\mathcal A}^{R_0} \\ 
            \text{\small regular near $B_{R_0}$} 
            \end{gather*}
        \end{minipage}};
\node (43) [right=of 42] {\begin{minipage}{\textwidth}
            \begin{gather*} 
            u_{\epsilon} \\ 
            \text{\small not regular but small} 
            \end{gather*}
        \end{minipage}};
\node (44) [right=of 43] {$\;$};
\node (51) [below=of 42] {$u_{\mathcal A}$};
\node (52) [below=of 44] {$u_{H^2}$};

\path[->]
(1) edge node[right] {$\varphi \in C^\infty_{\rm comp}$} (2)
(2) edge node[left] {$\Pilow$} (31)
    edge node[right] {$\hspace{0.2cm}\Pihigh$} (32)   
(31) edge node {} (42)
(31) edge node[left] {} (41)
(31) edge node {} (43)
(41) edge node {} (51)
(42) edge node {} (51)
(43) edge node {} (52)
(32) edge node {} (52)
;
\end{tikzpicture}
\caption{Splitting of the Helmholtz solution} \label{fig:split}
\end{figure}
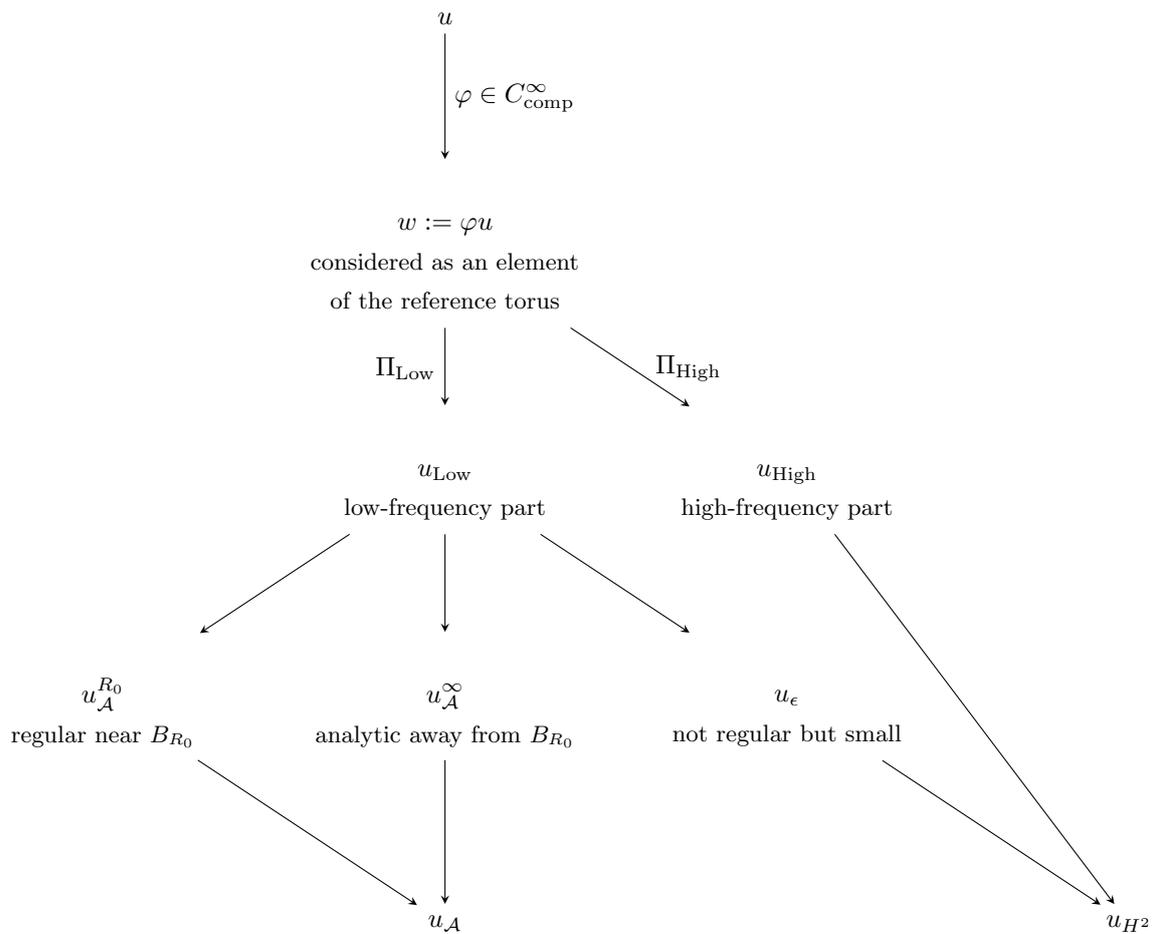

\subsection{Proof of the bound (\ref{eq:HF_eq}) on $\uH$ (the high-frequency component)} \label{subsec:high}

We proceed in three steps: we first use the abstract information we have about $P^\sharp_\hsc$ to bound $\Pihigh w$ by $\Vert g \Vert_{\mathcal H}$ modulo a commutator term living away from the black box $B_{R_0}$. We then use 
Lemmas \ref{thm:funcloc1} and \ref{thm:funcloc2}  to show that this commutator is given, up to negligible terms, by the semiclassical pseudodifferential calculus on the torus $\mathbb T^d_{R_{\sharp}}$. Finally, we work in the torus and use the semiclassical elliptic-parametrix construction (Theorem \ref{thm:para}) to estimate this commutator, seen as a semiclassical pseudodifferential operator on $\mathbb T^d_{R_{\sharp}}$.

\subsubsection*{Step 1:~An abstract estimate in $\mathcal H^\sharp$}
Since $\Pihigh$ commutes with $P^{\sharp}_{\hsc}$,
\begin{align} \nonumber
(P^{\sharp}_{\hsc}-I)(\Pihigh w) &= \Pihigh (P^{\sharp}_{\hsc}-I)(w) \\ &=  \Pihigh (P_{\hsc}-I)(w) = \Pihigh \varphi g+\Pihigh [P_{\hsc}, M_\varphi] u
= \Pihigh \varphi g+\Pihigh [P^{\sharp}_{\hsc}, M_\varphi] u,\label{eq:eqPiH}
\end{align}
where we used the fact that we can replace $P^{\sharp}_{\hsc}$ by  $P_{\hsc}$ (and vice versa) on $\supp \varphi\subset B_{R_0}$ by (\ref{eq:bbreq1}) and (\ref{eq:defref})). For $\lambda \in \mathbb R$, let
$$
f(\lambda) := (\lambda - 1)^{-1}(1 - \psi_{\mu'})(\lambda),
$$ 
where $f \in C_0(\mathbb R)$ (defined by \eqref{eq:C0}) by (\ref{eq:proppsimup}).
Using (\ref{eq:PiPi}), the fact that the Borel calculus in an algebra
homomorphism (Part \ref{it:fc3} of Theorem \ref{thm:fundfc}), and
finally (\ref{eq:eqPiH}), we get
\beq\label{eq:PiHcom}
\Pihigh w = \Pihigh' \Pihigh w 
= f(P_\hsc ^\sharp)(P_\hsc ^\sharp-I) \Pihigh w = f(P_\hsc ^\sharp) \big(  \Pihigh \varphi g+\Pihigh [P^{\sharp}_{\hsc}, M_\varphi] u \big).
\eeq
Since $f\in C_0(\mathbb R),$ $f(P^\sharp_\hsc)$ is uniformly bounded from $\Hilb^{\sharp}\to \Hilb^{\sharp}$ by Part \ref{it:fc5} of Theorem \ref{thm:fundfc}. 
Combining this fact with  (\ref{eq:PiHcom}), we obtain
$$
\norm{\Pihigh w}_{\Hilb^{\sharp}} \lesssim  \norm{\Pihigh \varphi g}_{\Hilb^{\sharp}}+ \norm{\Pihigh [P^{\sharp}_{\hsc},M_\varphi] u}_{\Hilb^{\sharp}}.
$$
Writing $P^{\sharp}_{\hsc}\Pihigh w = \Pihigh w + (P^{\sharp}_{\hsc}-I)\Pihigh w$ and using (\ref{eq:eqPiH}) again, we obtain
$$
\norm{\Pihigh w}_{\Hilb^{\sharp}} + \norm{P^{\sharp}_{\hsc} \Pihigh w}_{\Hilb^{\sharp}}  \lesssim  \norm{\Pihigh \varphi g}_{\Hilb^{\sharp}}+ \norm{\Pihigh [P^{\sharp}_{\hsc},M_\varphi] u}_{\Hilb^{\sharp}}.
$$
Hence, by (\ref{eq:boundPi})
\begin{align}
\norm{\Pihigh w}_{\Hilb^{\sharp}}+ \norm{P^{\sharp}_{\hsc} \Pihigh w}_{\Hilb^{\sharp}}  &\lesssim  \norm{\varphi g}_{\Hilb^{\sharp}}+ \norm{\Pihigh [P^{\sharp}_{\hsc},M_\varphi] u}_{\Hilb^{\sharp}} \nonumber  \\
& \lesssim \norm{ g}_{\Hilb}+ \norm{\Pihigh [P^{\sharp}_{\hsc},M_\varphi] u}_{\Hilb^{\sharp}}. \label{eq:high1}
\end{align}

\subsubsection*{Step 2:~Viewing $\Pihigh [P^{\sharp}_{\hsc},M_\varphi]$ as a semiclassical pseudodifferential operator on $\mathbb T^d_{R_{\sharp}}$}

To prove \eqref{eq:HF_eq} from \eqref{eq:high1}, it therefore remains to bound the commutator term $\Pihigh [P^{\sharp}_{\hsc},M_\varphi] u$. Since $[P^{\sharp}_{\hsc},M_\varphi]$ lives away from $\mathcal H_{R_0}$,
we consider the high-frequency cut-off \emph{in terms of the semiclassical pseudodifferential
calculus} thanks to Lemma \ref{thm:funcloc2}. 

Since $\varphi$ is compactly supported in $B_{R_{\sharp}}$ and equal to one near $B_{R_0}$, 
in $\mathcal H^\sharp$ we can write 
$[P^{\sharp}_{\hsc},M_\varphi] $ as (using the notation in \S\ref{subsec:bb})
\begin{equation} \label{eq:comsupp}
[P^{\sharp}_{\hsc},M_\varphi] = (0, [Q_\hsc, \varphi]) = (0, \phi [Q_\hsc, \varphi] \phi) =  (0, [Q_\hsc, \varphi] \phi)
\end{equation}
where $\phi \in C^\infty_{\rm comp}(\mathbb R^d)$ is supported in $B_{R_{\sharp}}$, equal to zero near $B_{R_0}$, and such that
\begin{equation} \label{eq:suppphi}
\phi = 1 \text{ near } \operatorname{supp}\nabla \varphi.
\end{equation}
Let $\chi \in C_c^\infty(\mathbb R^d)$ be supported in $B_{R_{\sharp}}$, equal to zero near $B_{R_0}$, and equal to one near $\operatorname{supp}\phi$. Using (\ref{eq:comsupp}) and Lemma \ref{thm:funcloc1} (i.e., the pseudo-locality of the functional calculus) with $\psi_1 =1-\chi$ and $\psi_2=\chi\phi=\phi$, we obtain that
\begin{align} 
\Pihigh [P^{\sharp}_{\hsc},M_\varphi] &= \chi \Pihigh \chi \phi [P^{\sharp}_{\hsc},M_\varphi] \phi  + O(\hsc^\infty)_{ {\mathcal D} _\hsc^{\sharp,-\infty} \rightarrow  {\mathcal D} _\hsc^{\sharp, \infty}} \nonumber \\
 &= \chi \Pihigh \chi [P^{\sharp}_{\hsc},M_\varphi] \phi  + O(\hsc^\infty)_{ {\mathcal D} _\hsc^{\sharp,-\infty} \rightarrow  {\mathcal D} _\hsc^{\sharp, \infty}}, \label{eq:PiHproper}
\end{align}
where we used the last equality in (\ref{eq:comsupp}) to obtain the second line.
By Lemma \ref{thm:funcloc2} with $f(P_\hsc^\sharp) = \psi_\mu(P_\hsc^\sharp) = \Pilow$, $\Pilow^\Psi:=\psi_\mu(Q_\hsc) \in \Psi^{-\infty}_\hsc(\mathbb T^d_{R_\sharp})$ is such that
$$
\chi \Pilow \chi = \chi \Pilow^\Psi \chi  + O(\hsc^\infty)_{ {\mathcal D} _\hsc^{\sharp,-\infty} \rightarrow  {\mathcal D} _\hsc^{\sharp, \infty}}
$$ 
Hence, taking $\Pihigh^{\Psi} := I - \Pilow^{\Psi}= (1-\psi_\mu)(Q_\hsc) \in \Psi^0_\hsc(\mathbb T^d_{R_{\sharp}})$,
\begin{equation} \label{eq:PiHP}
\chi \Pihigh \chi = \chi \Pihigh^{\Psi}\chi  + O(\hsc^\infty)_{ {\mathcal D} _\hsc^{\sharp,-\infty} \rightarrow  {\mathcal D} _\hsc^{\sharp, \infty}}
\end{equation}
in other words, modulo negligible terms, $\chi \Pihigh \chi $ is a high-frequency cut-off defined from the semiclassical pseudodifferential calculus. 
We here emphasise  that, since $\chi$ is supported in $B_{R_{\sharp}}$ and vanishes near $B_{R_0}$, $\chi\Pihigh^{\Psi}\chi $ can be seen as
an element of  \emph{both} $\mathcal L(\mathcal H ^{\sharp})$ \emph{and} $\Psi^0_\hsc(\mathbb T^d_{R_{\sharp}})$.

\ble
With 
$\Pilow^{\Psi}:= \psi_\mu(Q_\hsc)$ and 
$\Pihigh^{\Psi} := (1-\psi_\mu)(Q_\hsc)$,
\begin{equation} \label{eq:PiLPWF}
\WFh  \Pilow^\Psi \subset  q_\hsc^{-1}\big(\supp \,\psi_\mu\big) = \{|q_\hsc| \leq 2 \mu\}
\eeq
and 
\beq\label{eq:PiHPWF}
\WFh  \Pihigh^\Psi \subset  q_\hsc^{-1}\big(\supp  (1- \psi_\mu)\big) = \{|q_\hsc| \geq \mu\}.
\end{equation}
\ele

\bpf
This follows from \eqref{microsupport} (in Lemma \ref{thm:funcloc2}), first with $f= \psi_\mu$, and then with $f=1-\psi_\mu$.
\epf

\

By (\ref{eq:PiHproper}) and (\ref{eq:PiHP}), for any $N$ and any $m$,
$$
\big\Vert \Pihigh [P^{\sharp}_{\hsc},M_\varphi] u \big\Vert_{\mathcal H^{\sharp}} 
\leq \big\Vert\chi\Pihigh^{\Psi} \chi [P^{\sharp}_{\hsc},M_\varphi] \phi u \big\Vert_{\mathcal H^{\sharp}} + C_{N,m} \hsc^N \big\Vert  [P^{\sharp}_{\hsc},M_\varphi]   \phi u \big\Vert_{\mathcal D_\hsc^{\sharp, -m}} +  C'_{N} \hsc^N \big\Vert \widetilde \phi u \big\Vert_{\mathcal H^{\sharp}},
$$
with $\widetilde \phi$ compactly supported in $B_{R_{\sharp}} \backslash B_{R_0}$ and equal to one on $\operatorname{supp}\phi$. Taking $m=1$, then $N=M+1$ and using the resolvent estimate (\ref{eq:res}) we get
 \begin{align} \nonumber
\big\Vert \Pihigh [P^{\sharp}_{\hsc},M_\varphi] u \big\Vert_{\mathcal H^{\sharp}} &\leq \big\Vert \chi \Pihigh^{\Psi}  \chi [P^{\sharp}_{\hsc},M_\varphi] \phi u \big\Vert_{\mathcal H^{\sharp}} + C''_{M+1} \hsc^{M+1} \big\Vert  \widetilde \phi u \big\Vert_{\mathcal H^{\sharp}} \\
&= \big\Vert\chi \Pihigh^{\Psi}  \chi [P^{\sharp}_{\hsc},M_\varphi] \phi u \big\Vert_{\mathcal H^{\sharp}} + C''_{M+1} \hsc^{M+1} \big\Vert  \widetilde \phi u \big\Vert_{\mathcal H} \nonumber \\
&\lesssim \big\Vert \chi \Pihigh^{\Psi} \chi [P^{\sharp}_{\hsc},M_\varphi] \phi u \big\Vert_{\mathcal H^{\sharp}} +  \big\Vert  g \big\Vert_{\mathcal H}. \label{eq:PiHex}
\end{align}
Finally, by the definition of $P^\sharp_\hsc$ \eqref{eq:defref} and the fact that $\phi$ equals zero near $B_{R_0}$,
$$
\big\Vert \chi \Pihigh^{\Psi} \chi [P^{\sharp}_{\hsc},M_\varphi] \phi u \big\Vert_{\mathcal H^{\sharp}}
=\big\Vert \chi \Pihigh^{\Psi}\chi [Q_\hsc - I,\varphi] \phi u \big\Vert_{L^2(\mathbb T_{R_{\sharp}}^d)},
$$
hence by (\ref{eq:PiHex}),
\begin{equation} \label{eq:apprPiH}
\big\Vert \Pihigh [P^{\sharp}_{\hsc},M_\varphi] u \big\Vert_{\mathcal H^{\sharp}} \lesssim \big\Vert \chi \Pihigh^{\Psi}\chi [Q_\hsc - I ,\varphi] \phi u \big\Vert_{L^2(\mathbb T_{R_{\sharp}}^d)} +  \Vert  g \Vert_{\mathcal H}.
\end{equation}

\subsubsection*{Step 3:~A semiclassical elliptic estimate in $\mathbb T^d_{R_{\sharp}} $}

Combining \eqref{eq:high1} and \eqref{eq:apprPiH}, we see that to prove \eqref{eq:decHF} we only need to bound 
$ \chi\Pihigh^{\Psi} \chi[Q_\hsc - I ,\varphi] \phi u$ in $L^2(\mathbb T_{R_{\sharp}}^d)$.
To do this, we use the semiclassical elliptic parametrix construction given by Theorem \ref{thm:para}. 

\begin{lemma}
The operator $Q_\hsc - I $ is semiclassically elliptic
on the semiclassical wavefront set of $\hsc^{-1} \chi\Pihigh^{\Psi}\chi[Q_\hsc - I,  \varphi]$.
\end{lemma}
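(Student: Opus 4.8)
The plan is to show that $\WFh\big(\hsc^{-1}\Pi_H^{\Psi}[Q_\hsc-1,\varphi]\big)$ is contained in the open set $U:=\{(x,\xi)\in T^{*}\mathbb T^{d}_{R_\sharp}:q_\hsc(x,\xi)>\mu\}$. On $U$ one has $q_\hsc>\mu$, hence $q_\hsc\geq\mu$, so by \eqref{eq:newdefmu} the principal symbol $q_\hsc-1$ of $Q_\hsc-1$ satisfies $q_\hsc(x,\xi)-1\geq c_{\rm ell}\langle\xi\rangle^{2}>0$; this is precisely semiclassical ellipticity of $Q_\hsc-1$ (of order two) on $U$, hence on the wavefront-set in question, which is the assertion of the Lemma. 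First I would record the elementary structural facts: $[Q_\hsc-1,\varphi]=[Q_\hsc,\varphi]$ since the scalar $-1$ commutes with $M_\varphi$; $[Q_\hsc,\varphi]$ is a semiclassical differential operator of order one that is $O(\hsc)$ (its top-order part cancels), so $\hsc^{-1}[Q_\hsc,\varphi]$ is a semiclassical pseudodifferential operator of order one on $\mathbb T^{d}_{R_\sharp}$; and $\Pi_H^{\Psi}$ is a semiclassical pseudodifferential operator of order zero by \eqref{eq:PiHP}. Hence $B:=\hsc^{-1}\Pi_H^{\Psi}[Q_\hsc-1,\varphi]$ is a semiclassical pseudodifferential operator of order one and $\WFh(B)$ is well defined.

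Next I would localise $\WFh(B)$ by intersecting the wavefront-sets of the two factors, using the product rule \eqref{eq:WFprod}: $\WFh(B)\subset\WFh(\Pi_H^{\Psi})\cap\WFh\big(\hsc^{-1}[Q_\hsc,\varphi]\big)$. For the first factor, \eqref{eq:PiHP} gives $\WFh(\Pi_H^{\Psi})\subset\operatorname{supp}\big((1-\psi_\mu)\circ q_\hsc\big)$. For the second, every coefficient of the differential operator $[Q_\hsc,\varphi]$ is a coefficient of $Q_\hsc$ times $\partial^{\gamma}\varphi$ for some $|\gamma|\geq1$, hence is supported in $\operatorname{supp}\nabla\varphi$; so the full symbol of $\hsc^{-1}[Q_\hsc,\varphi]$ is supported in $\{x\in\operatorname{supp}\nabla\varphi\}$ and $\WFh\big(\hsc^{-1}[Q_\hsc,\varphi]\big)\subset\pi^{-1}(\operatorname{supp}\nabla\varphi)$, where $\pi:T^{*}\mathbb T^{d}_{R_\sharp}\to\mathbb T^{d}_{R_\sharp}$ is the canonical projection. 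Consequently $\WFh(B)\subset\operatorname{supp}\big((1-\psi_\mu)\circ q_\hsc\big)\cap\pi^{-1}(\operatorname{supp}\nabla\varphi)$.

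It then remains to verify $q_\hsc>\mu$ on this intersection. Since $\varphi\equiv1$ on $B_R$ and $R>R_0$, the set $\operatorname{supp}\nabla\varphi$ is a compact subset of $\mathbb R^{d}\setminus\overline{B_{R_0}}$ — exactly the region where, thanks to the cut-offs inserted in \S\ref{subsec:high}, $\Pi_H^{\Psi}$ and $[Q_\hsc,\varphi]$ have symbolic meaning and $q_\hsc$ is defined classically. There, by \eqref{eq:propq_ell} the degree-two part of $q_\hsc$ is $\geq C|\xi|^{2}$ and the lower-order coefficients of $Q_\hsc$ are uniformly bounded, so $q_\hsc(x,\xi)\geq C|\xi|^{2}-C'(1+|\xi|)\geq -C_{0}$ for some $C_{0}>0$ independent of $\hsc$; in particular $q_\hsc>\mu$ outside a bounded set of frequencies, so $U$ is an open neighbourhood of $\WFh(B)$ (and of fibre infinity) as used in the first paragraph. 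On the other hand $\psi_\mu\equiv1$ on $[-2\mu,2\mu]$, so $\operatorname{supp}(1-\psi_\mu)\subset\{|\lambda|\geq2\mu\}$. Enlarging $\mu$ in \eqref{eq:newdefmu} if necessary — which only strengthens the implication there — we may assume $2\mu>C_{0}$; then at every point of $\WFh(B)$ we have both $q_\hsc\geq-C_{0}>-2\mu$ and $q_\hsc\in\operatorname{supp}(1-\psi_\mu)\subset\{|\lambda|\geq2\mu\}$, which forces $q_\hsc\geq2\mu>\mu$, i.e.\ $\WFh(B)\subset U$. With the reduction of the first paragraph this proves the Lemma, and the ellipticity just established is what feeds into the microlocal elliptic parametrix, Theorem~\ref{thm:para}, in the next step of \S\ref{subsec:high}.

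The only delicate point is the last one: one must be sure the intersection of the two support sets lands in the branch $\{q_\hsc\geq\mu\}$, where the quantitative hypothesis \eqref{eq:newdefmu} is available, and not in a spurious branch $\{q_\hsc\leq-2\mu\}$; this is settled purely by the a priori lower bound on $q_\hsc$ coming from ellipticity of its principal part together with the freedom to take $\mu$ large. (In the specialisations of Theorem~\ref{thm:mainbb} to the Dirichlet and transmission problems, $Q_\hsc$ has no lower-order terms, so $q_\hsc\geq0$ everywhere and this issue does not even arise.) Everything else is the routine wavefront-set calculus recalled in Appendix~\ref{app:sct}, applied to the already-established symbolic description \eqref{eq:PiHP} of $\Pi_H^{\Psi}$.
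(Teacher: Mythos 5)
Your proof is correct and follows the paper's route: $\operatorname{WF}_\hsc(\hsc^{-1}\Pi_H^{\Psi}[Q_\hsc-1,\varphi])\subset\operatorname{WF}_\hsc\Pi_H^{\Psi}\subset\operatorname{supp}\big((1-\psi_\mu)\circ q_\hsc\big)$ via \eqref{eq:WFprod} and \eqref{eq:PiHP}, and then ellipticity of $Q_\hsc-1$ on $\{q_\hsc\geq\mu\}$ follows from \eqref{eq:newdefmu}. The extra localisation in $\operatorname{supp}\nabla\varphi$ you record is correct but not actually used for this lemma; it is precisely what the paper invokes in the \emph{following} lemma about $S[Q_\hsc-1,\phi]$. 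The place where you go beyond the paper is the ``delicate point'' you flag at the end: the paper writes $\operatorname{supp}\big((1-\psi_\mu)\circ q_\hsc\big)\subset\{q_\hsc\geq\mu\}$, but since $\operatorname{supp}(1-\psi_\mu)\subset\{|\lambda|\geq 2\mu\}$ is two-sided, this inclusion tacitly requires $q_\hsc>-2\mu$ throughout phase space. Your repair is the right one: \eqref{eq:propq_ell} plus uniform boundedness of the lower-order coefficients of $Q_\hsc$ yields a uniform lower bound $q_\hsc\geq -C_0$, and $\mu$ in \eqref{eq:newdefmu} may be enlarged at will (doing so only strengthens that implication), so one can arrange $2\mu>C_0$ and kill the spurious negative branch. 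This is a minor but genuine omission in the paper's argument; it is invisible in the Dirichlet and transmission specialisations of \S\ref{sec:obstacles}, where $Q_\hsc$ is a pure second-order operator and hence $q_\hsc\geq 0$, which is probably why it went unremarked.
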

\begin{proof}
By (\ref{eq:WFprod}), (\ref{eq:support}), (\ref{eq:PiHPWF}) and \eqref{eq:John1},
$$
\operatorname{WF}_\hsc(\hsc^{-1} \chi\Pihigh^{\Psi}\chi[Q_\hsc - I, \varphi]) \subset \operatorname{WF}_\hsc {\Pihigh^{\Psi}} \subset q_\hsc^{-1}\big(\operatorname{supp} (1-\psi_\mu)\big) \subset \{ q_\hsc \geq \mu \}.
$$
But, on $\{ q_\hsc \geq \mu\}$, by definition of $\mu$ (\ref{eq:newdefmu}),
$$
\langle \xi \rangle^{-2} (q_\hsc(x,\xi) - 1)\geq c_{\rm ell} >0,
$$
and the proof is complete.
\end{proof}

\

Since $\hsc^{-1} \chi\Pihigh^{\Psi}\chi[Q_\hsc - I, \varphi] \in \Psi^1_\hsc(\mathbb T^d_{R_{\sharp}})$ by Theorem \ref{thm:basicP},
we can therefore apply the elliptic parametrix construction given by Theorem \ref{thm:para} with $A = \hsc^{-1}\chi \Pihigh^{\Psi}\chi[Q_\hsc - I, \varphi]$, $B = Q_\hsc - I$, and $\ell = 1$, $m= 2$. Hence, there exists $S \in \Psi^{-1}_\hsc(\mathbb T^d_{R_{\sharp}})$ and $R = O(\hsc^\infty)_{\Psi^{-\infty}_\hsc}$ with
\begin{equation} \label{eq:wfQ}
\operatorname{WF}_\hsc S \subset \operatorname{WF}_\hsc \big( \hsc^{-1} \Pihigh^{\Psi}[Q_\hsc - I, \varphi] \big),
\end{equation}
and such that
$$
\chi\Pihigh^{\Psi}\chi[Q_\hsc - I, \varphi] =  \hsc S ( Q_\hsc - I) + R.
$$
We apply both sides of this identity to $\phi u$ and then use \eqref{eq:bbreq1} and the fact that $\phi$ is equal to zero 
near $B_{R_0}$ and supported in $B_{R_{\sharp}}$; the result is that
\begin{align} \label{eq:parau}
\chi\Pihigh^{\Psi}\chi[Q_\hsc - I, \varphi] \phi u &=  \hsc S ( Q_\hsc - I) \phi u + R \phi u \nonumber \\
&=  \hsc S \phi ( Q_\hsc - I) u +\hsc S [ Q_\hsc - I, \phi] u + R \phi u  \nonumber\\
&=  \hsc S \phi ( P_\hsc- I) u +\hsc S [ Q_\hsc - I, \phi] u + R \phi u.
\end{align}
The following lemma combined with (\ref{eq:WFdis}) shows that 
\begin{equation} \label{eq:commneg}
S [ Q_\hsc - I, \phi] = O(\hsc^\infty)_{\Psi^{-\infty}_\hsc}.
\end{equation}

\begin{lemma}
$$
\operatorname{WF}_\hsc S \cap \operatorname{WF}_\hsc  [ Q_\hsc - I, \phi] = \emptyset.
$$
\end{lemma}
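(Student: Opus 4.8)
The plan is to prove the stronger statement that $\operatorname{WF}_\hsc S$ and $\operatorname{WF}_\hsc[Q_\hsc-1,\phi]$ have \emph{disjoint projections to the base} $\mathbb T^d_{R_\sharp}$, from which disjointness in $T^*\mathbb T^d_{R_\sharp}$ is immediate. The underlying mechanism is that a commutator of $Q_\hsc$ with a multiplication operator is, spatially, microsupported in the support of the gradient of the multiplier, and the multipliers $\varphi$ and $\phi$ have been arranged (via \eqref{eq:suppphi}) so that $\operatorname{supp}\nabla\varphi$ and $\operatorname{supp}\nabla\phi$ are disjoint.

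First I would bound $\operatorname{WF}_\hsc S$. Since the scalar $1$ commutes with multiplication, $[Q_\hsc-1,\varphi]=[Q_\hsc,\varphi]$; writing out $Q_\hsc=\sum_{|\alpha|\le2}a_{\hsc,\alpha}(x)(\hsc D_x)^\alpha$ and using the Leibniz rule shows that $\hsc^{-1}[Q_\hsc,\varphi]\in\Psi^1_\hsc(\mathbb T^d_{R_\sharp})$ with full symbol supported in $\{x\in\operatorname{supp}\nabla\varphi\}$, because every surviving term carries at least one derivative of $\varphi$. Hence, by \eqref{eq:support}, $\operatorname{WF}_\hsc(\hsc^{-1}[Q_\hsc-1,\varphi])\subset\{x\in\operatorname{supp}\nabla\varphi\}$, and combining this with \eqref{eq:wfQ} and the product property \eqref{eq:WFprod} gives $\operatorname{WF}_\hsc S\subset\operatorname{WF}_\hsc(\hsc^{-1}\Pi_H^\Psi[Q_\hsc-1,\varphi])\subset\{x\in\operatorname{supp}\nabla\varphi\}$. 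The identical computation with $\phi$ in place of $\varphi$ yields $\operatorname{WF}_\hsc[Q_\hsc-1,\phi]=\operatorname{WF}_\hsc[Q_\hsc,\phi]\subset\{x\in\operatorname{supp}\nabla\phi\}$.

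Finally I would invoke \eqref{eq:suppphi}: $\phi\equiv1$ on an open neighbourhood $U$ of $\operatorname{supp}\nabla\varphi$, so $\nabla\phi\equiv0$ on $U$ and therefore $\operatorname{supp}\nabla\phi\subset\mathbb T^d_{R_\sharp}\setminus U$, which is disjoint from $\operatorname{supp}\nabla\varphi\subset U$. Consequently $\{x\in\operatorname{supp}\nabla\varphi\}\cap\{x\in\operatorname{supp}\nabla\phi\}=\emptyset$ in $T^*\mathbb T^d_{R_\sharp}$, and a fortiori $\operatorname{WF}_\hsc S\cap\operatorname{WF}_\hsc[Q_\hsc-1,\phi]=\emptyset$. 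The only genuinely substantive step is the spatial microsupport statement for the commutators; everything else is bookkeeping with the microsupport calculus recalled in Appendix \ref{app:sct}, so I do not anticipate any real obstacle.
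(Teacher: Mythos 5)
Your proposal is correct and follows essentially the same route as the paper's proof: bound $\operatorname{WF}_\hsc S$ by $\operatorname{supp}\nabla\varphi\times\mathbb R^d$ using \eqref{eq:wfQ}, \eqref{eq:WFprod} and \eqref{eq:support}, bound $\operatorname{WF}_\hsc[Q_\hsc-1,\phi]$ by $\operatorname{supp}\nabla\phi\times\mathbb R^d$ the same way, and invoke \eqref{eq:suppphi} for disjointness of the two supports. The only difference is cosmetic: you make explicit (via the Leibniz rule) why the commutator's symbol is spatially supported in the gradient of the multiplier, a step the paper subsumes into the citation of \eqref{eq:WFprod} and \eqref{eq:support}.
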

\begin{proof}
By (\ref{eq:wfQ}) and the definition of $Q_\hsc$ \eqref{eq:Qdef},
$$
\operatorname{WF}_\hsc S \subset \operatorname{WF}_\hsc [Q_\hsc - I, \varphi] \subset  (\operatorname{supp} \nabla \varphi)
\times \mathbb R^d$$
Similarly, 
$$
\operatorname{WF}_\hsc  [ Q_\hsc - I, \phi] \subset
(\operatorname{supp} \nabla \phi) \times \mathbb R^d,
$$
Now, by (\ref{eq:suppphi}), $\operatorname{supp} \nabla \varphi$ and $\operatorname{supp} \nabla \phi$ are disjoint, and the result follows.
\end{proof}

\

Therefore, by (\ref{eq:parau}), (\ref{eq:commneg}) and the definition of $O(\hsc^\infty)_{\Psi^{-\infty}_\hsc}$ (\ref{eq:residual}), for any $N$, there exists $C_N, C'_N >0$ such that
\begin{align*}
\Vert \chi\Pihigh^{\Psi}\chi[Q_\hsc - I, \varphi] \phi u \Vert_{L^2(\mathbb T^d_{R_{\sharp}})}
&\leq \hsc \Vert S \phi ( P_\hsc- I) u \Vert_{L^2(\mathbb T^d_{R_{\sharp}})} + C_N \hsc^N \Vert \widetilde \phi u \Vert_{L^2(\mathbb T^d_{R_{\sharp}})} + C'_N \hsc^N \Vert \phi u \Vert_{L^2(\mathbb T^d_{R_{\sharp}})} \\
&\quad= \hsc \Vert S \phi ( P_\hsc- I) u \Vert_{L^2(\mathbb T^d_{R_{\sharp}})} + C_N \hsc^N \Vert \widetilde \phi u \Vert_{\mathcal H} + C'_N \hsc^N \Vert \phi u \Vert_{\mathcal H},
\end{align*}
where $\widetilde \phi$ is compactly supported in $B_{R_{\sharp}} \backslash B_{R_0}$ and equal to one on $\operatorname{supp}\phi$.
Taking $N:=M+1$ and using the resolvent estimate (\ref{eq:res}), we then obtain that
\begin{align}
\Vert \chi\Pihigh^{\Psi}\chi[Q_\hsc - I, \varphi] \phi u \Vert_{L^2(\mathbb T^d_{R_{\sharp}})}
& \lesssim \hsc \Vert S \phi ( P_\hsc- I) u \Vert_{L^2(\mathbb T^d_{R_{\sharp}})} + \hsc \Vert g \Vert_{\mathcal H} \nonumber \\
& \lesssim \hsc \Vert \phi ( P_\hsc- I) u \Vert_{L^2(\mathbb T^d_{R_{\sharp}})} + \hsc \Vert g \Vert_{\mathcal H}, \label{eq:parauest}
\end{align}
where we used in the second line the fact that $S \in \Psi^{-1}(\mathbb T^d_{R_{\sharp}}) \subset  \Psi^{0}(\mathbb T^d_{R_{\sharp}})$ together with Part (iii) of Theorem \ref{thm:basicP}. Now,
since $\phi$ is equal to zero 
near $B_{R_0}$ and supported in $B_{R_{\sharp}}$, we get
$$
\Vert \phi ( P_\hsc- I) u \Vert_{L^2(\mathbb T^d_{R_{\sharp}})} = \Vert \phi (P_\hsc- I)u \Vert_{\mathcal H} = \Vert \phi g \Vert_{\mathcal H} \leq \Vert g \Vert_{\mathcal H}.
$$
Thus,  (\ref{eq:parauest}) implies that
$$
\Vert \chi\Pihigh^{\Psi}\chi[Q_\hsc - I ,\varphi] \phi u \Vert_{L^2(\mathbb T_{R_{\sharp}}^d)}\lesssim \hsc \Vert g \Vert_{\mathcal H}. 
$$
Combining this last estimate with (\ref{eq:high1})  and (\ref{eq:apprPiH}) we conclude that
$$
\norm{\Pihigh w}_{\Hilb^{\sharp}}+ \norm{P^{\sharp}_{\hsc} \Pihigh w}_{\Hilb^{\sharp}} \lesssim \Vert g \Vert_{\mathcal H};
$$
hence (\ref{eq:HF_eq}) holds.

\subsection{Decomposition (\ref{eq:LF_dec}) of $\uL$, and proof of the bounds (\ref{eq:decLF1})--(\ref{eq:decLF5}) and (\ref{eq:eps_eq})  (the low-frequency component)} \label{subsec:low}

By Assumption 2 in Theorem \ref{thm:mainbb}, there exists $E_\infty = \residual$ with
\begin{equation} \label{eq:Edec}
\mathcal E(P^\sharp_\hsc) = E  + E_\infty,
\end{equation}
and the low-frequency estimate (\ref{eq:lowenest}) holds. 
By (\ref{eq:propLambda}) (a consequence of the definition of the constant $\Lambda$ \eqref{eq:Lambda}), $\mathcal E$ is nowhere zero on the support of $\psi_\mu$; therefore  the function $\psi_\mu/\mathcal E$ is well-defined and in $C_0(\mathbb R)$. The definition
of $\Pilow$ (\ref{eq:PiL}) and Part \ref{it:fc3} of Theorem \ref{thm:fundfc} imply that
\begin{equation} \label{eq:divbyE}
 \Pilow =\psi_\mu(P^\sharp_\hsc)  = \mathcal E(P^\sharp_\hsc)\left (\frac{1}{\mathcal E} \psi_\mu\right)(P^\sharp_\hsc) = E \circ\bigg( \left [\frac{1}{\mathcal E} \psi_\mu\bigg](P^\sharp_\hsc)\right)+ E_\infty\circ\left(\left [\frac{1}{\mathcal E} \psi_\mu\right](P^\sharp_\hsc)\right).
\end{equation}
Then, by Part \ref{it:fc5} of Theorem \ref{thm:fundfc} and the fact that $E_\infty = \residual$,
\begin{equation} \label{eq:bigres}
E_\infty\circ\left(\left [\frac{1}{\mathcal E} \psi_\mu\right](P^\sharp_\hsc)\right)
  = \residual.
\end{equation}
\subsubsection{The decomposition (\ref{eq:LF_dec}) of $\uL$ when $\rho = 1$}
We first assume that $\rho = 1$ and we show the decomposition (\ref{eq:LF_dec}), together with the bound (\ref{eq:decLF5}) 
on $\ulow$ and the bound (\ref{eq:eps_eq}) on $u_\epsilon$.
In this case, we let
\beqs
u_{\mathcal A} :=  E  \circ\left(\left [\frac{1}{\mathcal E} \psi_\mu\right](P^\sharp_\hsc)\right)  w\quad\tand\quad u_\epsilon := E_\infty \circ\left(\left [\frac{1}{\mathcal E} \psi_\mu\right](P^\sharp_\hsc) \right) w,
\eeqs
so that  (\ref{eq:LF_dec}) holds by \eqref{eq:Edec} and \eqref{eq:PiL}.
Moreover, since both $u_{\mathcal A}$ and $u_\epsilon$ involve compactly-supported functions of $P^\sharp_\hsc$,
by the reasoning immediately above \eqref{eq:newlabel1}, both $u_{\cA}$ and $u_\epsilon$ are in $\mathcal{D}^{\sharp,\infty}_\hsc$.
Then, using (in this order) the low-frequency estimate (\ref{eq:lowenest}), Part \ref{it:fc5} of Theorem \ref{thm:fundfc}, and finally the resolvent estimate (\ref{eq:res}), we get
\begin{align*}
 \Vert D(\alpha) u_{\mathcal A}  \Vert_{\mathcal H^{\sharp}} &= \N{ D(\alpha) 
 E  \circ\left(\left [\frac{1}{\mathcal E} \psi_\mu\right](P^\sharp_\hsc)\right) 
 w}_{\mathcal H^{\sharp}}
\leq C_{\mathcal E}(\alpha, \hsc) \N{ \left [\frac{1}{\mathcal E} \psi_\mu\right](P^\sharp_\hsc) w }_{\mathcal H^{\sharp} } \\
&\leq C_{\mathcal E}(\alpha, \hsc) \,\sup_{\lambda \in \mathbb R} \left| \frac{1}{\mathcal E(\lambda)} \psi_{\mu}(\lambda) \right| \Vert w \Vert_{\mathcal H^{\sharp} }
= C_{\mathcal E}(\alpha, \hsc)\, \sup_{\lambda \in \mathbb R} \left| \frac{1}{\mathcal E(\lambda)} \psi_{\mu}(\lambda) \right| \Vert w \Vert_{\mathcal H } \\
&\hspace{5.5cm}\lesssim  C_{\mathcal E}(\alpha, \hsc) \,\sup_{\lambda \in \mathbb R} \left| \frac{1}{\mathcal E(\lambda)} \psi_{\mu}(\lambda) \right| \hsc^{-M-1}\Vert g \Vert_{\mathcal H };
\end{align*}
thus (\ref{eq:decLF5}) holds. In addition, the bound (\ref{eq:eps_eq}) on $u_\epsilon$ follows from (\ref{eq:bigres}) together with the resolvent estimate  (\ref{eq:res}).

\subsubsection{The decomposition (\ref{eq:LF_dec}) of $\uL$ when $\rho\neq 1$}\label{sec:332}

We now tackle the general case (i.e., $\rho\neq 1$).
Given $R_0$ and $\widetilde R$, let $\RfarA, \RfarB, \RlocB, \RlocA,$ be such that 
$R_0<\RfarA< \RfarB< \RlocB< \RlocA<\widetilde R$ and
  $\rho = 1$ near $B_{\RlocA}$. In addition, let 
$\rho_{1}\in C^\infty(\mathbb T^d_{R_\sharp})$ be equal to one near $B_{R_0}$ 
and such that $\operatorname{supp} (1 - \rho_{1}) \subset (B_{\RfarB})^c $
 and $\operatorname{supp} \rho_{1} \Subset B_{\RlocB} $ (see Figure \ref{fig:line_func}).

\begin{figure}
\begin{center}
    \includegraphics[scale=1]{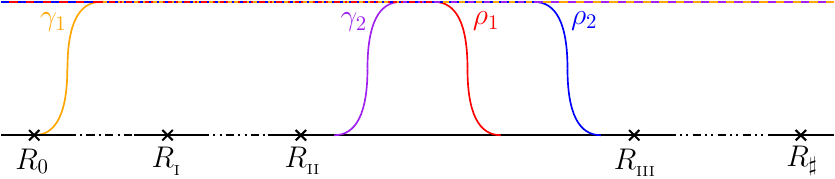}
  \end{center}
    \caption{The cut-off functions $\rho_1, \rho_2, \gamma_1, \gamma_2$. $\rho_1$ is used in \S\ref{sec:332}, $\rho_2$ in \S\ref{sec:333}, and $\gamma_1$ and $\gamma_2$ in \S\ref{sec:334}.}
  \label{fig:line_func}
\end{figure}

Using the decomposition (\ref{eq:divbyE}) of $\Pilow$, we decompose $\uL = \Pilow w$ as
\begin{align}\nonumber
\uL &= \Pilow \rho_1 w + \Pilow (1-\rho_1) w \\ 
 &= 
 E  \circ\left(\left [\frac{1}{\mathcal E} \psi_\mu\right](P^\sharp_\hsc)\right) 
 \rho_1w + 
E_\infty \circ\left(\left [\frac{1}{\mathcal E} \psi_\mu\right](P^\sharp_\hsc)\right) 
 \rho_1w + \Pilow (1-\rho_1) w,\label{eq:boundary_condition1}
\end{align}
and we define
\beq\label{eq:uR0A}
u^{R_0}_{\mathcal A} :=  
 E  \circ\left(\left [\frac{1}{\mathcal E} \psi_\mu\right](P^\sharp_\hsc)\right) 
\rho_1w\quad\tand\quad \uL^\infty :=  \Pilow (1-\rho_1) w.
\eeq
Since $u^{R_0}_{\mathcal A}$ involves a compactly-supported function of $P^\sharp_\hsc$, $u^{R_0}_{\mathcal A} \in \mathcal{D}^{\sharp,\infty}_\hsc$.
We decompose $u^{\infty}_{\rm Low}$ in \S\ref{sec:334} below as 
\begin{equation}\label{eq:A:1}
u^{\infty}_{\rm Low} = u^{\infty}_{\mathcal A} + \widetilde u_\epsilon
\end{equation}
with $u^{\infty}_{\mathcal A} \in \mathcal{D}^{\sharp,\infty}_\hsc$, (see \eqref{eq:uainf} below)
and then define
\begin{equation}\label{eq:A:defAe}
u_{\mathcal A} :=  u_{\cA}^{R_0} +  u^{\infty}_{\mathcal A} \in
\mathcal{D}^{\sharp,\infty}_\hsc
 \quad\tand\quad u_{\epsilon} := \widetilde u_\epsilon + 
E_\infty \circ\left(\left [\frac{1}{\mathcal E} \psi_\mu\right](P^\sharp_\hsc)\right) 
 \rho_1w
\end{equation}
(with the first definition implying (\ref{eq:decLF0})).
These definitions imply that $\uL = \ulow + u_\epsilon$, i.e., that (\ref{eq:LF_dec}) holds. To complete the proof, we now need to show that the bounds (\ref{eq:decLF1}) and \eqref{eq:decLF2} on $u^{R_0}_{\mathcal A}$, the bounds \eqref{eq:decLF3} and (\ref{eq:decLF4}) 
on $u^\infty_{\cA}$, and the bound (\ref{eq:eps_eq}) on $u_\epsilon$ all hold. This decomposition of $\uL$ and the ideas behind it
are summed-up in Figure \ref{fig:split_uL}.

\begin{figure}
\hspace{-18em}
 \begin{tikzpicture}%
  [>=stealth,
   shorten >=1pt,
   align = center,
   node distance=4cm and 4.5cm,
   on grid
  ]
 \node (1)  {$\uL := \Pilow w$};
\node (21) [below=of 1] {\begin{minipage}{\textwidth}
            \begin{gather*} 
            \Pilow \rho_1 w \\ 
            = \mathcal E(P_\hbar^\sharp) \circ\left(\left [\frac{1}{\mathcal E} \psi_\mu\right](P^\sharp_\hsc)\right)  \rho_1 w
            \end{gather*}
        \end{minipage}};
\node (22) [right=of 21] {\begin{minipage}{\textwidth}
            \begin{gather*} 
            \uL^\infty := \Pilow (1-\rho_1) w
            \end{gather*}
        \end{minipage}};
\node (32) [below=of 21] {
\begin{minipage}{\textwidth}
            \begin{gather*} 
          E_\infty \circ\left(\left [\frac{1}{\mathcal E} \psi_\mu\right](P^\sharp_\hsc)\right) \rho_1 w \\ 
            \text{\small small part} 
            \end{gather*}
        \end{minipage}};
\node (31) [left=of 32] {\begin{minipage}{\textwidth}
            \begin{gather*} 
            u_{\mathcal A}^{R_0} :=E \circ\left(\left [\frac{1}{\mathcal E} \psi_\mu\right](P^\sharp_\hsc)\right)  \rho_1 w \\ 
            \text{\small regular near $B_{R_0}$ thanks} \\
            \text{\small to the low-frequency estimate,} \\
            \text{\small small away from $B_{R_0}$} \\
            \end{gather*}
        \end{minipage}};
\node (33) [below=of 22] {\begin{minipage}{\textwidth}
            \begin{gather*} 
            \tilde u_{\epsilon} \\ 
            \text{\small small part} 
            \end{gather*}
        \end{minipage}};
\node (34) [right=of 33] {\begin{minipage}{\textwidth}
            \begin{gather*} 
            u^\infty_{\mathcal A} \\ 
            \text{\small part given by} \\
            \text{\small a Fourier multiplier} \\
            \text{\small on the torus $\mathbb T^d$,} \\
            \text{\small entire away from $B_{R_0}$,} \\
             \text{\small small near $B_{R_0}$}           
            \end{gather*}
        \end{minipage}};
\node (4) [below=of 33] {$u_\epsilon$};
\path[->]
(1) edge node [left] {\small part near $B_{R_0}$ \;} (21)
    edge node [right]{\hspace{0.4cm} \small part away from $B_{R_0}$} (22)
(21) edge node {} (31)
    edge node {} (32)
(22) edge node {} (33)
    edge node {} (34)
(32) edge node {} (4)
(33) edge node {} (4)
;
\end{tikzpicture}
\caption{The splitting of $\uL$}  \label{fig:split_uL}
\end{figure}

\subsubsection{Proof of (\ref{eq:decLF1}) and (\ref{eq:decLF2}) for the localised term $u^{R_0}_{\mathcal A}$.} \label{sec:333}

Using (in this order) the definition of $u^{R_0}_{\mathcal A}$ \eqref{eq:uR0A}, the fact that $\rho=1$ on $B_{\RlocA}$,
the low-frequency estimate (\ref{eq:lowenest}), Part \ref{it:fc5} of Theorem \ref{thm:fundfc}, and finally the resolvent estimate (\ref{eq:res}) we obtain 
\begin{align*}
 \Vert D(\alpha) u^{R_0}_{\mathcal A}  \Vert_{\mathcal H^{\sharp}(B_{\RlocA})} &= \N{ D(\alpha)
  E  \circ\left(\left [\frac{1}{\mathcal E} \psi_\mu\right](P^\sharp_\hsc)\right) 
 \rho_1w}_{\mathcal H^{\sharp}(B_{\RlocA})}
\leq \N{ \rho D(\alpha)
  E  \circ\left(\left [\frac{1}{\mathcal E} \psi_\mu\right](P^\sharp_\hsc)\right) 
\rho_1w}_{\mathcal H^{\sharp} } \\
&\leq C_{\mathcal E}(\alpha, \hsc) \N{
\left(\left [\frac{1}{\mathcal E} \psi_\mu\right](P^\sharp_\hsc)\right)
\rho_1w}_{\mathcal H^{\sharp} }
\leq C_{\mathcal E}(\alpha, \hsc) \sup_{\lambda \in \mathbb R} \left| \frac{1}{\mathcal E(\lambda)} \psi_{\mu}(\lambda) \right| \Vert w \Vert_{\mathcal H^{\sharp} }\\
&= C_{\mathcal E}(\alpha, \hsc) \sup_{\lambda \in \mathbb R} \left| \frac{1}{\mathcal E(\lambda)} \psi_{\mu}(\lambda) \right| \Vert w \Vert_{\mathcal H }
\lesssim  C_{\mathcal E}(\alpha, \hsc) \sup_{\lambda \in \mathbb R} \left| \frac{1}{\mathcal E(\lambda)} \psi_{\mu}(\lambda) \right| \hsc^{-M-1}\Vert g \Vert_{\mathcal H };
\end{align*}
thus (\ref{eq:decLF1}) holds, where the $\sup_{\lambda \in \mathbb R}$ becomes $\sup_{\lambda \in [-\Lambda, \Lambda]}$ because of the support property \eqref{eq:propLambda} of $\psi_\mu$.

Let $\rho_2 \in C^\infty(\mathbb T^d_{R_\sharp})$ be supported in $B_{\RlocB}$ and such that $\rho_2 = 1$ on $\operatorname{supp} \rho_1$ (see Figure \ref{fig:line_func}).
By (\ref{eq:Edec}), Part \ref{it:fc3} of Theorem \ref{thm:fundfc}, and the pseudo-locality of the functional calculus (Lemma \ref{thm:funcloc1}),
\begin{align}\nonumber
(1-\rho_2) 
 E  \circ\left(\left [\frac{1}{\mathcal E} \psi_\mu\right](P^\sharp_\hsc)\right) 
\rho_1 &= (1-\rho_2) \mathcal E(P^\sharp_\hsc)\left (\frac{1}{\mathcal E} \psi_\mu\right)(P^\sharp_\hsc) \rho_1 + O(\hsc^\infty)_{ {\mathcal D} _\hsc^{\sharp,-\infty} \rightarrow  {\mathcal D} _\hsc^{\sharp,\infty}} \\
&= (1-\rho_2) \Pilow \rho_1 + \residual  = O(\hsc^\infty)_{ {\mathcal D} _\hsc^{\sharp,-\infty} \rightarrow  {\mathcal D} _\hsc^{\sharp,\infty}}. 
 \label{eq:A:R0:1}
\end{align}
On the other hand, since $\rho_2 = 0$ on $B^c_{{\RlocB}}$,
\begin{align*}
\Vert  u^{R_0}_{\mathcal A}  \Vert_{\mathcal D^{m, \sharp}((B_{\RlocB})^c)}
&=  \N{(1-\rho_2) 
 E  \circ\left(\left [\frac{1}{\mathcal E} \psi_\mu\right](P^\sharp_\hsc)\right) 
\rho_1 w }_{\mathcal D^{m, \sharp}((B_{\RlocB})^c)} \\
&\leq \N{(1-\rho_2)
 E  \circ\left(\left [\frac{1}{\mathcal E} \psi_\mu\right](P^\sharp_\hsc)\right) 
\rho_1 w }_{\mathcal D^{m, \sharp}}.
\end{align*}
Combining this with (\ref{eq:A:R0:1}) and then using the resolvent estimate (\ref{eq:res}), we obtain (\ref{eq:decLF2}).

\subsubsection{The term away from the black-box $u^{\infty}_{\rm Low}$.} \label{sec:334}
\paragraph{Step 1:~obtaining the decomposition  (\ref{eq:A:1}) and the bound (\ref{eq:eps_eq})  on $u_\epsilon$.}
Let $\gamma_1 \in C^\infty(\mathbb T^d_{R_\sharp})$ be equal to zero near $B_{R_0}$, and such that $\gamma_1 = 1$ near $(B_{\RfarA})^c$.
Since $\supp (1-\gamma_1)$ and $\supp(1-\rho_1)$ are disjoint (see Figure \ref{fig:line_func}), 
by the pseudo-locality of the functional calculus given by Lemma \ref{thm:funcloc1},
\begin{align*}
\Pilow (1-\rho_1) &= \gamma_1 \Pilow (1-\rho_1) +  O(\hsc^\infty)_{ {\mathcal D} _\hsc^{\sharp,-\infty} \rightarrow  {\mathcal D} _\hsc^{\sharp,\infty}}\\
&= \gamma_1 \Pilow \gamma_1 (1-\rho_1)+  O(\hsc^\infty)_{ {\mathcal D} _\hsc^{\sharp,-\infty} \rightarrow  {\mathcal D} _\hsc^{\sharp,\infty}}.
\end{align*}
Therefore, by Lemma \ref{thm:funcloc2},
\begin{equation} \label{eq:A:far:1}
\Pilow (1-\rho_1) = \gamma_1 \Pi^\Psi_L \gamma_1 (1-\rho_1) +  O(\hsc^\infty)_{ {\mathcal D} _\hsc^{\sharp,-\infty} \rightarrow  {\mathcal D} _\hsc^{\sharp,\infty}},
\end{equation}
where $\Pilow^\Psi \in\Psi^{-\infty}_\hsc(\mathbb T^d_{R_\sharp})$ and
\begin{equation} \label{eq:low:suppg}
\operatorname{WF}_\hsc \Pilow^\Psi \subset \operatorname{supp}\psi_\mu \circ q_h.
\end{equation}
By \eqref{eq:Qnew}, since $\psi_\mu$ is compactly supported, there exists $\lambda>1$ such that 
\begin{equation} \label{eq:low:a:A}
\operatorname{supp}\psi_\mu \circ q_\hsc \subset \mathbb T^d_{R_\sharp} \times B\left(0,\frac \lambda2\right).
\end{equation}
Now, let 
$\widetilde{\varphi} \in C^\infty_{\rm comp}$ be compactly supported in $B(0, \lambda^2)$ and equal to one on $B(0,\lambda^2/4)$. By (\ref{eq:low:a:A}) and (\ref{eq:low:suppg}) together with (\ref{eq:support}), $\operatorname{WF}_\hsc \big( 1 - \Optorus(\widetilde{\varphi}(|\xi|^2)) \big) \cap\operatorname{WF}_\hsc \big(\Pilow^\Psi \big)= \emptyset$. Therefore, by (\ref{eq:WFdis}), as operators on the torus,
\begin{equation}\label{eq:E1} 
\Pilow^\Psi = \Optorus(\widetilde{\varphi}(|\xi|^2)) \Pilow^\Psi + E_1,
\end{equation}
where $E_1=O(\hsc^\infty)_{\Psi^{-\infty}_\hsc}$.
Since $\gamma_1=0$ near $B_{R_0}$, 
by the definitions of $P^\sharp$ \eqref{eq:defref}, $\|\cdot\|_{\mathcal{D}_\hsc^{\sharp,m}}$ \eqref{eq:BB:norms}, and $\|\cdot\|_{H_\hsc^{2m}(\mathbb{T}^d_{R^\sharp})}$ \eqref{eq:Hhnorm},
\beq\label{eq:normequiv}
\N{\gamma_1 w}_{\mathcal{D}_\hsc^{\sharp,m}} \lesssim_m \N{\gamma_1 w}_{H^{2m}_\hsc(\mathbb{T}^d_{R^\sharp})} \lesssim_m 
\N{\gamma_1 w}_{\mathcal{D}_\hsc^{\sharp,m}} \quad \tfa w \in \mathcal{D}_\hsc^{\sharp,m},
\eeq
and thus $\gamma_1 E_1 \gamma_1 = O(\hsc^\infty)_{ {\mathcal D} _\hsc^{\sharp,-\infty} \rightarrow  {\mathcal D} _\hsc^{\sharp,\infty}}$. 
Therefore, combining this with \eqref{eq:E1} and (\ref{eq:A:far:1}), we obtain that
\beq\label{eq:PiLrho1}
\Pilow (1-\rho_1) = \gamma_1 \Optorus(\widetilde{\varphi}(|\xi|^2)) \Pi^\Psi_L \gamma_1 (1-\rho_1) + E_2,
\eeq
where $E_2=O(\hsc^\infty)_{ {\mathcal D} _\hsc^{\sharp,-\infty} \rightarrow  {\mathcal D} _\hsc^{\sharp,\infty}}$.
We let
\beq\label{eq:uainf}
u_{\mathcal A}^\infty := \gamma_1 \Optorus(\widetilde{\varphi}(|\xi|^2)) \Pi^\Psi_L \gamma_1 (1-\rho_1)w
\quad\tand\quad\widetilde u_\epsilon := E_2w;
\eeq
observe that $u_{\mathcal A}^\infty\in \cD^\sharp$ because of the presence of $\gamma_1$ at the start of the expression.
The decomposition (\ref{eq:A:1}) then holds by \eqref{eq:PiLrho1} and \eqref{eq:uR0A}. The bound (\ref{eq:eps_eq}) on $ u_\epsilon$ follows directly from the definition of $u_\epsilon$ (\ref{eq:A:defAe}), together with (\ref{eq:bigres}), the fact that $E_2=O(\hsc^\infty)_{ {\mathcal D} _\hsc^{\sharp,-\infty} \rightarrow  {\mathcal D} _\hsc^{\sharp,\infty}}$, and the resolvent estimate (\ref{eq:res}).

\paragraph{Step 2:~proving that $u_{\mathcal A}^\infty$ is regular in $(B_{\RfarA})^c$ (i.e., the bound (\ref{eq:decLF3})).}
By the definition  of $u_\cA^\infty$ \eqref{eq:uainf} and the fact that $\gamma_1 = 1$ on $(B_{\RfarA})^c$,
\begin{align} \label{eq:A:far:2}
\Vert \partial^\alpha u_{\mathcal A}^\infty  \Vert_{\mathcal H((B_{\RfarA})^c)} &= \Big\Vert \partial^\alpha\Optorus(\widetilde{\varphi}(|\xi|^2)) \Pi^\Psi_L \gamma_1 (1-\rho_1)w \Big\Vert_{\mathcal H((B_{\RfarA})^c)} \nonumber \\
&\leq  \Big\Vert \partial^\alpha\Optorus(\widetilde{\varphi}(|\xi|^2)) \Pi^\Psi_L \gamma_1 (1-\rho_1)w \Big\Vert_{L^2(\mathbb T^d_{R_\sharp})}.
\end{align}
We now bound the right-hand side of \eqref{eq:A:far:2}. By Lemma \ref{lem:toruscalculus}, $\Optorus(\widetilde{\varphi}(|\xi|^2))$ is given 
as a Fourier multiplier on the torus (defined by \eqref{eq:A:mult}), i.e.,
\begin{equation} \label{eq:A:far:3}
\Optorus(\widetilde{\varphi}(|\xi|^2)) = \widetilde{\varphi}(-\hsc^2 \Delta).
\end{equation}
Let $v\in L^2(\mathbb T^d_{R_\sharp})$ be arbitrary, and let $\widehat v(j)$ be the Fourier coefficients of $v$. By (\ref{eq:A:mult}),
$$
\widetilde{\varphi}(-\hsc^2 \Delta) v =\sum_{j \in \ZZ^d} \widehat v(j) \widetilde{\varphi}(\hsc^2 \lvert j\rvert ^2 \pi^2/R_\sharp^2) e_j,
$$
where the normalised eigenvectors $e_j$ are defined by (\ref{eq:A:eigen}). Hence, for any multi-index $\alpha$,
\begin{align*}
\partial^\alpha \widetilde{\varphi}(-\hsc^2 \Delta) v &=\sum_{j \in \ZZ^d} \widehat v(j) \widetilde{\varphi}(\hsc^2 \lvert j\rvert ^2 \pi^2/R_\sharp^2) \left( \frac{\ri \pi j}{R_{\sharp}}\right)^\alpha e_j \\
&= \sum_{j \in \ZZ^d, \; |j| \leq \frac{\lambda R_{\sharp}}{{\hsc\pi}}} \widehat v(j) \widetilde{\varphi}(\hsc^2 \lvert j\rvert ^2 \pi^2/R_\sharp^2) \left( \frac{\ri \pi j}{R_{\sharp}}\right)^\alpha e_j,
\end{align*}
since $\widetilde{\varphi}$ is supported in $B(0, \lambda^2)$.
Therefore
\begin{align} \label{eq:low:A:3}
\Vert \partial^\alpha \widetilde{\varphi}(-\hsc^2 \Delta) v \Vert_{L^2(\mathbb T^d_{R_{\sharp}})}^2 &=  \sum_{j \in \ZZ^d, \; |j| \leq \frac{\lambda R_{\sharp}}{\hsc\pi}} \left| \widehat v(j) \widetilde{\varphi}(\hsc^2 \lvert j\rvert ^2 \pi^2/R_\sharp^2) \left( \frac{\ri \pi j}{R_{\sharp}}\right)^\alpha \right|^2  \nonumber \\
&\leq \lambda^{2|\alpha|} \hsc^{-2|\alpha|}  \sum_{j \in \ZZ^d} | \widehat v(j) | ^2 \nonumber \\
&= \lambda^{2|\alpha|} \hsc^{-2|\alpha|} \Vert v \Vert^2_{L^2(\mathbb T^d_{R_{\sharp}})}.
\end{align}
We now use (\ref{eq:low:A:3}) with 
$$
v:= \Pi^\Psi_L \gamma_1 (1-\rho_1)w,
$$
and combine the resulting estimate with (\ref{eq:A:far:2}) and (\ref{eq:A:far:3}). Using the fact that $\Pi^\Psi_L \in \Psi^{\infty}(\mathbb T^d_{R_\sharp})$, $\gamma_1 = 0$ near $B_{R_0}$, and the resolvent estimate \eqref{eq:res}, we get
\begin{align*}
\Vert \partial^\alpha u_{\mathcal A}^\infty  \Vert_{\mathcal H((B_{\RfarA})^c)} 
&\leq\lambda^{|\alpha|} \hsc^{-|\alpha|} \Vert \Pi^\Psi_L \gamma_1 (1-\rho_1)w \Vert_{{L^2}(\mathbb T^d_{R_{\sharp}})} \lesssim \lambda^{|\alpha|} \hsc^{-|\alpha|} \Vert \gamma_1 (1-\rho_1)w \Vert_{{L^2}(\mathbb T^d_{R_{\sharp}})} \\
&= \lambda^{|\alpha|} \hsc^{-|\alpha|} \Vert \gamma_1 (1-\rho_1)w \Vert_{\mathcal H} \leq \lambda^{|\alpha|} \hsc^{-|\alpha|} \hsc^{-M-1} \Vert g \Vert_{\mathcal H};
\end{align*}
hence (\ref{eq:decLF3}) holds.

\paragraph{Step 3:~proving that $u_{\mathcal A}^\infty$ is negligible in $B_{\RfarB}$ (i.e., the bound (\ref{eq:decLF4})).}

It therefore remains  to show (\ref{eq:decLF4}). Let $\gamma_2 \in C^\infty(\mathbb T^d_{R_\sharp})$ be equal to zero on $B_{\RfarB}$ and such that $\gamma_2 = 1$ on $\operatorname{supp} (1-\rho_1)$; see Figure \ref{fig:line_func}. Since
$\supp(1-\gamma_2)$ and $\supp(1-\rho_1)$ are disjoint, using (\ref{eq:WFprod}) and (\ref{eq:support})
\beqs
\operatorname{WF}_\hsc \Big( (1-\gamma_2) \Optorus(\widetilde{\varphi}(|\xi|^2)) \Pi^\Psi_L \Big) \cap {\operatorname{WF}_\hsc (1-\rho_1)} = \emptyset.
\eeqs
Then, by (\ref{eq:WFdis}),
$$
(1-\gamma_2) \Optorus(\widetilde{\varphi}(|\xi|^2)) \Pi^\Psi_L (1-\rho_1) = O(\hsc^\infty)_{\Psi^{-\infty}_\hsc}
$$
as a pseudo-differential operator on the torus.
Multiplying by $\gamma_1$ on the right and on the left, and then using
the fact that $\gamma_1=0$ on $B_{R_0}$ and the norm equivalence
\eqref{eq:normequiv}, we find
\begin{equation} \label{eq:A:far:fin}
(1-\gamma_2) \gamma_1 \Optorus(\widetilde{\varphi}(|\xi|^2)) \Pi^\Psi_L \gamma_1 (1-\rho_1) =   O(\hsc^\infty)_{ {\mathcal D} _\hsc^{\sharp,-\infty} \rightarrow  {\mathcal D} _\hsc^{\sharp,\infty}}
\end{equation}
as an element of $\mathcal L(\mathcal H^\sharp)$. 
On the other hand, since $\gamma_2 = 0$ near $B_{\RfarB}$,
$$
\Vert u_{\mathcal A}^\infty \Vert_{\mathcal D^{\sharp, m}_\hsc (B_{\RfarB})} = \Vert (1-\gamma_2)u_{\mathcal A}^\infty \Vert_{\mathcal D^{\sharp, m}_\hsc (B_{\RfarB})}.
$$
Then (\ref{eq:decLF4}) follows from combining this last equation with the definition of $\ulow^\infty$ \eqref{eq:uainf},  (\ref{eq:A:far:fin}),  and the resolvent estimate (\ref{eq:res}). 

\subsubsection{Showing that the decomposition is independent of $\mathcal{E}$ when $E_\infty=0$.}

When $E_\infty=0$,
 $u_{\mathcal A}^{R_0}
=\Pilow \rho_1 w$ (by \eqref{eq:boundary_condition1}), and 
$u_\epsilon= \widetilde{u}_\epsilon$ (by \eqref{eq:A:defAe}); see Figure \ref{fig:split_uL}.
The decomposition and associated bounds are therefore independent of $\mathcal{E}$.

The proof of Theorem \ref{thm:mainbb} is now complete.

\section{Proofs of Theorems \ref{thm:Dirichlet}, \ref{thm:transmission}, and \ref{thm:LSW3} (i.e., the application of Theorem \ref{thm:mainbb} to the Dirichlet, transmission, and full-space problems)}\label{sec:obstacles}

Theorem \ref{thm:LSW3} is proved by directly verifying the assumptions of Theorem \ref{thm:mainbb}.
Theorems \ref{thm:Dirichlet} and \ref{thm:transmission} are proved using the following two corollaries of Theorem \ref{thm:mainbb}. In the first corollary (Corollary \ref{cor:heat}), the low-frequency estimate (\ref{eq:lowenest}) comes from a heat-flow estimate, and 
in the second (Corollary \ref{cor:reg}) from an elliptic-regularity estimate.

\begin{corollary} \label{cor:heat}
Let $P_{\hsc}$ be a semiclassical black-box operator on $\mathcal H$ satisfying  the polynomial resolvent estimate 
 (\ref{eq:res}) in $\subsetH \subset (0, \hsc_0]$. 
Assume further that (i)  $P^{\sharp}_{\hsc}\geq a(\hsc) >0$ for some $a(\hsc) >0$, and (ii) 
for some $\alpha$-family of black-box differentiation operators $(D(\alpha))_{\alpha \in \frak A}$ (Definition \ref{def:BBdiff}), there exists $\rho\in C^\infty(\mathbb T^d_{R_\sharp})$ equal to one near $B_{R_0}$ such that, for some family of subsets $I(\hbar, \alpha) \subset [0, +\infty)$, the following localised heat-flow estimate holds, 
\begin{equation} \label{eq:heatflow}
\N{\rho D(\alpha) \re^{-t P^\sharp_{\hsc}}}_{\mathcal H^{\sharp} \rightarrow \mathcal H^{\sharp} } \leq C(\alpha, t, \hbar ) \quad \tfa  \alpha \in \frak A , \;t\in I(\hbar, \alpha), \; \hsc \in \subsetH.
\end{equation}

Then, if $R>0$ is such that $R_0<R<R_{\sharp}$, $g\in\mathcal H$ is compactly supported in $B_{R}$, and $u \in \mathcal D_{\rm out}$ satisfies (\ref{eq:pde}), there exist $u_{\mathcal A} \in \mathcal D_\hsc^{\sharp, \infty}$ and $u_{ H^2} \in \mathcal D^{\sharp}$ such that $u$ decomposes as (\ref{eq:maindec}). Furthermore, $u_{ H^2}$ satisfies (\ref{eq:decHF})
and there exists $\RfarA, \RfarB, \RlocB, \RlocA$, and $R_\sharp$,
with $R_0<\RfarA<\RfarB<\RlocB<\RlocA<R_\sharp$,
such that $u_{\mathcal A}$ decomposes as $u_{\mathcal A} = u^{R_0}_{\mathcal A} + u^{\infty}_{\mathcal A}$
with, for some $\Lambda > 0$ and $\lambda>1$,
\begin{equation} \label{eq:decLFheat1}
\Vert D(\alpha) u^{R_0}_{\mathcal A} \Vert_{\mathcal H^{\sharp}(B_{\RlocA}) } \lesssim  \inf_{t \in  I(\hbar, \alpha)} C(\alpha, \hbar, t) \re^{\Lambda t}\; \hsc^{-M-1} \Vert g \Vert_{\mathcal H} \quad \tfa  \hsc \in \subsetH \tand \alpha \in \mathfrak A,
\end{equation}
\begin{equation} \label{eq:decLFheat2}
\Vert \partial^\alpha u^{\infty}_{\mathcal A} \Vert_{\mathcal H^{\sharp}((B_{\RfarA})^c) } \lesssim \lambda^{|\alpha|} \hsc^{-|\alpha|-M-1} \Vert g \Vert_{\mathcal H} \quad \tfa  \hsc \in \subsetH \tand \alpha \in \mathfrak A,
\end{equation}
and, for any $N,m>0$ there exists $C_{N,m}>0$ such that
\begin{equation} \label{eq:decLFheat3}
\Vert u^{\infty}_{\mathcal A} \Vert_{\mathcal D_\hsc^{\sharp,m}(B_{\RfarB}) } + \Vert u^{R_0}_{\mathcal A} \Vert_{\mathcal D_\hsc^{\sharp,m}((B_{\RlocB})^c) }  \leq C_{N,m}\hsc^N \Vert g \Vert_{\mathcal H}   \quad \tfa  \hsc \in \subsetH \tand \alpha \in \mathfrak A.
\end{equation}
In addition, if $\rho = 1$, the decomposition (\ref{eq:maindec}) can be constructed in such a way that instead of (\ref{eq:decLFheat1})--(\ref{eq:decLFheat3}), $u_{\mathcal A}$ satisfies
the global regularity estimate
\begin{equation} \label{eq:decLFheat5}
\Vert D(\alpha) u_{\mathcal A} \Vert_{\mathcal H^{\sharp} } \lesssim  \inf_{t \in  I(\hbar, \alpha)} C(\alpha, \hbar, t) \re^{\Lambda t} \; \hsc^{-M-1} \Vert g \Vert_{\mathcal H} \quad \tfa  \hsc \in \subsetH \tand \alpha \in \mathfrak A.
\end{equation}
Finally, the omitted constants in \eqref{eq:decLFheat1}, \eqref{eq:decLFheat2}, and \eqref{eq:decLFheat5} are independent of $\hsc$ and $\alpha$.
\end{corollary}
\begin{proof}
For $\alpha \in \mathfrak A$ and $\hbar \in \subsetH$, let $t\in I(\hbar, \alpha)$, and $\mathcal E_t(\lambda) := \re^{-t|\lambda|}$. Since $P^{\sharp}_{\hsc}\geq a(\hsc) >0$, $\operatorname{Sp}P^{\sharp}_{\hsc} \subset [a(\hsc), \infty)$. Therefore, by 
Parts  \ref{it:fc6} and  \ref{it:fc5} of Theorem \ref{thm:fundfc}, $\re^{-t P^\sharp_{\hsc}} = \mathcal E_t(P^\sharp_{\hsc})$. Such an $\mathcal E_t$ is in $C_0(\mathbb R)$, never vanishes, and satisfies (\ref{eq:lowenest}) with $E_t := \mathcal E_t(P^\sharp_{\hsc})$ and $C_{\mathcal E_t}(\alpha, \hsc) := C(\alpha, \hbar, t)$
by   (\ref{eq:heatflow}). From Theorem \ref{thm:mainbb}, we therefore obtain the above decomposition $u_\mathcal A, u_\mathcal A^{R_0},  u_\mathcal A^{\infty},  u_{H^2}$.
Since $\mathcal E_t(P^\sharp_{\hsc}) = E_t$, by the final part of Theorem \ref{thm:mainbb}, the decomposition is constructed independently of $\mathcal E_t$, and hence independently of $t$. The result then follows, with the infimum in $t$ in (\ref{eq:decLFheat1}) coming from (\ref{eq:decLF1}) and the fact that this estimate in valid for any $t\in I(\hbar, \alpha)$.
\end{proof}

\begin{corollary} \label{cor:reg}
Let $P_{\hsc}$ be a semiclassical black-box operator on $\mathcal H$ satisfying the polynomial resolvent estimate \eqref{eq:res} in $\subsetH \subset (0, \hsc_0]$. Assume further that, 
for some $\alpha$-family of black-box differentiation operators $(D(\alpha))_{\alpha \in \frak A}$ (in the sense of Definition \ref{def:BBdiff}), there exists $L>0$ and $0<L(\alpha)\leq L$ such that the following elliptic-regularity estimate holds,
\begin{equation} \label{eq:reg}
\N{ D(\alpha) w}_{\mathcal H^{\sharp}} \leq 
\sum_{\ell=0}^{L(\alpha)} C_\ell(\alpha,\hsc) \big\Vert (P^{\sharp}_\hsc)^\ell w \big\Vert_{\mathcal H^{\sharp}}
\quad \tfa  \alpha \in \frak A , \; w\in\mathcal D_\hsc^{\sharp, \infty}, \tand \hsc \in \subsetH,
\end{equation}
for some $C_\ell(\alpha, \hsc)>0, \,\ell=0,\ldots,L(\alpha)$.

Then, if $R_0<R<R_{\sharp}$, $g\in\mathcal H$ is compactly supported in $B_{R}$ and $u \in \mathcal D_{\rm out}$ satisfies (\ref{eq:pde}),  there exists $u_{\mathcal A} \in \mathcal D_\hsc^{\sharp, \infty}$, $u_{ H^2} \in \mathcal D^{\sharp}$ such that $u$ can be written as (\ref{eq:maindec}), $u_{H^2}$ satisfies (\ref{eq:decHF}), and $u_{\mathcal A}$ satisfies
\begin{equation} \label{eq:decLFreg}
\Vert D(\alpha) u_{\mathcal A} \Vert_{\mathcal H^{\sharp} } \lesssim  \bigg(\sum_{\ell=0}^{L(\alpha)} C_\ell(\alpha, \hsc)\bigg) \hsc^{-M-1} \Vert g \Vert_{\mathcal H} \quad \tfa  \alpha \in \frak A \tand \hsc \in \subsetH,
\end{equation}
where the omitted constant is independent of $\hsc$ and $\alpha$.
\end{corollary}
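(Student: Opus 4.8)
The plan is to verify the hypotheses of Theorem \ref{thm:mainbb} and then apply that theorem directly, exactly as in the proof of Corollary \ref{cor:heat}; the only work is to repackage the elliptic-regularity estimate \eqref{eq:reg} into the form of the abstract low-energy estimate \eqref{eq:lowenest}. Hypothesis 1 of Theorem \ref{thm:mainbb} is precisely the polynomial resolvent estimate \eqref{eq:res}, which is assumed. For Hypothesis 2, I would choose a fixed function $\mathcal E \in C_0(\mathbb R)$ that is nowhere zero and decays rapidly --- for instance $\mathcal E(\lambda):=\langle\lambda\rangle^{-N}$ with $N$ large enough that $\gamma_\ell:=\sup_{\lambda\in\mathbb R}|\lambda^\ell\mathcal E(\lambda)|<\infty$ for every $\ell\leq L(\alpha)$, $\alpha\in\frak A$ (when $\sup_{\alpha}L(\alpha)=\infty$ one instead takes a Gaussian $\mathcal E(\lambda):=\re^{-\lambda^2}$, at the cost of $\alpha$-dependent $\gamma_\ell$, which are harmless since they are absorbed into $C_{\mathcal E}(\alpha,\hsc)$). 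Since $\mathcal E$ is bounded, $\mathcal E(P^{\sharp}_\hsc)$ maps $\mathcal D_\hsc^{\sharp,\infty}$ into itself, so for $v\in\mathcal D_\hsc^{\sharp,\infty}$ the element $w:=\mathcal E(P^{\sharp}_\hsc)v$ lies in $\mathcal D_\hsc^{\sharp,\infty}$ and \eqref{eq:reg} applies to it; note we use the same $\rho$ that \eqref{eq:reg} provides.

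The next step is the estimate itself: apply \eqref{eq:reg} to $w=\mathcal E(P^{\sharp}_\hsc)v$ and then, for each $\ell$, use that $\lambda\mapsto\lambda^\ell\mathcal E(\lambda)$ is a bounded function to bound $\N{(P^{\sharp}_\hsc)^\ell\mathcal E(P^{\sharp}_\hsc)v}_{\mathcal H^\sharp}\leq\gamma_\ell\N{v}_{\mathcal H^\sharp}$ via Parts \ref{it:fc3} and \ref{it:fc5} of Theorem \ref{thm:fundfc}. This gives
$$\N{\rho D(\alpha)\mathcal E(P^{\sharp}_\hsc)v}_{\mathcal H^\sharp}\leq\sum_{\ell=0}^{L(\alpha)}C_\ell(\alpha,\hsc)\N{(P^{\sharp}_\hsc)^\ell\mathcal E(P^{\sharp}_\hsc)v}_{\mathcal H^\sharp}\leq\Big(\sum_{\ell=0}^{L(\alpha)}\gamma_\ell C_\ell(\alpha,\hsc)\Big)\N{v}_{\mathcal H^\sharp},$$
i.e.\ \eqref{eq:lowenest} holds with $C_{\mathcal E}(\alpha,\hsc):=\sum_{\ell=0}^{L(\alpha)}\gamma_\ell C_\ell(\alpha,\hsc)$. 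Theorem \ref{thm:mainbb} then yields the decomposition \eqref{eq:maindec} with $u_{H^2}$ satisfying \eqref{eq:decHF} and $u_{\mathcal A}$ satisfying \eqref{eq:decLF}; to obtain \eqref{eq:decLFreg} it remains to bound $C_{\mathcal E}(\beta,\hsc)\leq(L(\beta)+1)\big(\max_{0\leq\ell\leq L(\beta)}\gamma_\ell\big)\big(\max_{0\leq\ell\leq L(\beta)}C_\ell(\beta,\hsc)\big)$ and to absorb the $\hsc$-independent prefactor $(L(\beta)+1)\max_\ell\gamma_\ell$ into an enlarged geometric constant $\sigma^{|\alpha|}$.

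The one point requiring care is this last bookkeeping: one must ensure that the number $L(\alpha)+1$ of summands, together with the constants $\gamma_\ell$ arising when the powers $\lambda^\ell$ are stripped off $\mathcal E$, are dominated by a fixed geometric factor $\sigma^{|\alpha|}$. Under the natural relation $L(\alpha)\lesssim|\alpha|$ between the number of powers of $P^{\sharp}_\hsc$ needed in elliptic regularity and the order of differentiation, $L(\alpha)+1$ is trivially geometric, and with $\mathcal E=\langle\cdot\rangle^{-N}$ one has $\gamma_\ell\leq1$ for all relevant $\ell$; in the applications of interest (Theorem \ref{thm:transmission}), where only $|\alpha|\leq 2m$ --- hence $L(\alpha)\leq m$ --- for a fixed $m$ is needed, this is immediate. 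All the remaining manipulations are identical to those in the proof of Corollary \ref{cor:heat}.
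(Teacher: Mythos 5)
Your proposal is correct and follows essentially the same route as the paper: the paper likewise verifies Hypothesis 2 of Theorem \ref{thm:mainbb} by taking $\mathcal E(\lambda)=\langle\lambda\rangle^{-L(\alpha)}$, applying \eqref{eq:reg} to $w=\langle P^\sharp_\hsc\rangle^{-L(\alpha)}v$, and using that $\langle\lambda\rangle^{-L(\alpha)}\lambda^\ell\leq 1$ together with Part \ref{it:fc5} of Theorem \ref{thm:fundfc} to obtain \eqref{eq:lowenest} with $C_{\mathcal E}(\alpha,\hsc)=\max_\ell C_\ell(\alpha,\hsc)$. If anything you are slightly more careful than the paper about choosing $\mathcal E$ uniformly in $\alpha$ (as Hypothesis 2 formally requires) and about absorbing the factor $L(\alpha)+1$ into $\sigma^{|\alpha|}$.
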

\begin{proof}
Let $\rho:=1$, $\mathcal E (\lambda) := \langle \lambda \rangle^{-L}$ and $C_{\mathcal E}(\alpha, \hsc) :=\sum_{\ell=0}^{L(\alpha)} C_\ell(\alpha, \hsc)$.
We now need to show that the bound \eqref{eq:reg} implies that the bound \eqref{eq:lowenest} holds with these choices of $\mathcal E$ and $C_{\mathcal E}$.
Given $v\in D_\hsc^{\sharp, \infty}$, let $w:= \langle
P^\sharp_\hsc\rangle^{-L} v \in D_\hsc^{\sharp, \infty}$.  The bound \eqref{eq:reg} implies that 
\begin{equation} \label{eq:reg2}
\N{\rho D(\alpha)  \langle P^\sharp_\hsc\rangle^{-L} v}_{\mathcal H^{\sharp}} \leq 
\sum_{\ell=0}^{L(\alpha)} C_\ell(\alpha, \hsc) 
 \big\Vert (P^{\sharp}_\hsc)^\ell  \langle P^\sharp_\hsc\rangle^{-L} v\big\Vert_{\mathcal H^{\sharp}}  \quad \tfa  \alpha \in \frak A \tand \hsc \in \subsetH.
\end{equation}
Since $\langle \lambda \rangle^{-L} \lambda^\ell \leq 1$, by Part \ref{it:fc5} of Theorem \ref{thm:fundfc}, the term in brackets on the right-hand side of \eqref{eq:reg2} is bounded by $C_{\mathcal{E}}(\alpha,\hsc) \|v\|_{H^\sharp}$, and then \eqref{eq:lowenest} follows.
The result \eqref{eq:decLFreg} then follows from the bound \eqref{eq:decLF5} in Theorem \ref{thm:mainbb}.
\end{proof}

\

\subsection{Proof of Theorem \ref{thm:Dirichlet}}

Let $\hsc := k^{-1}$, $g := \hsc^2 f$, and define $\mathcal H$ and $P_{\hsc}$ as in Lemma \ref{lem:obstacle}, so that $P_{\hsc}$ is a semiclassical black-box operator on $\mathcal H$. The assumption that $\Csol(k)$ is polynomially bounded means that \eqref{eq:res} holds with 
\beq\label{eq:H}
\subsetH:= \big\{\hsc : \hsc=k^{-1} \text{ with } k\in K\big\}.
\eeq
The plan is to apply Corollary \ref{cor:heat}, showing that the heat-flow estimate \eqref{eq:heatflow} is satisfied using the following theorem.

\begin{theorem}\mythmname{Heat-equation estimate from \cite{EsMoZh:17}}\label{thm:heat}
Suppose that $\obstacle_-, A, c,R_0$, and $R_1$ are as in Definition \ref{def:EDP}.
In addition, assume that $\obstacle_-$ is analytic, and that $A$ and $c$ are $C^\infty$ everywhere and analytic in $B_{R_*}$ for some $R_0<R_*<R_1$.
Let $P^\sharp_\hsc$ denote the associated black-box reference operator on the torus (as described in \S\ref{subsec:bb}).

Given $\rho \in C^\infty_{\rm comp}$ with $\operatorname{supp}\rho \subset B_{R_*}$,
there exists $C>0$ such that 
for all $t \in (0, 1]$ and for all $\EMZ\in[0,1]$
\beq\label{eq:EMZheat}
\norm{\rho\pa^\alpha \re^{t \hsc^{-2}P_\hbar^\sharp} }_{L^2\to L^2}
 \leq 
\exp(t^{-\EMZ})
  \abs{\alpha}!\,  C^{\abs{\alpha}}t^{(\EMZ-1)|\alpha|/2}.
\eeq
  \end{theorem}
   Note that the operator $\re^{t\hsc^{-2} P_\hbar^\sharp}$
is just the variable coefficient heat
operator for time $t.$ 

\

\bpf[References for the proof of Theorem \ref{thm:heat}]
When $\EMZ=1$, \eqref{eq:EMZheat} is essentially \cite[Theorem 1.1]{EsMoZh:17}, and when $\EMZ=0$, \eqref{eq:EMZheat} is a more-standard heat-equation estimate \cite[Equation 1.5]{EsMoZh:17}, attributed there to \cite[Part 3, \S3]{Fr:69}.

Indeed, the bound with $\EMZ=1$ follows from \cite[Lemma 2.7]{EsMoZh:17} with the choice of their parameter $\theta$ equal to $1$ (via an argument using Sobolev embedding in time, as discussed immediately before \cite[Lemma 2.7]{EsMoZh:17}).
The bound with $\EMZ=0$ follows from \cite[Lemma 2.7]{EsMoZh:17} with $\theta= t$ (since $\sigma=1$ for the heat equation in the notation of \cite[\S2]{EsMoZh:17}), as highlighted in \cite[Remark 2.8]{EsMoZh:17}.
The bound for general $\EMZ\in[0,1]$ then follows from \cite[Lemma 2.7]{EsMoZh:17} with $\theta= t^{1-\EMZ}$.

The main difference between the set up of \cite{EsMoZh:17} and the hypotheses of Theorem \ref{thm:heat} is that \cite{EsMoZh:17} works on a bounded domain with Dirichlet boundary conditions, whereas Theorem \ref{thm:heat} works on the torus with a Dirichlet obstacle inside. 
However, these global considerations only enter the arguments in \cite{EsMoZh:17} in deriving time-analyticity estimates of the heat semi group in \cite[Lemma 2.1]{EsMoZh:17}, and these estimates hold equally well on the torus with a Dirichlet obstacle.
\epf

\

As in Corollary \ref{cor:heat}, we choose $\rho$ to be equal to one near $B_{R_0}$, and further assume that $\rho$ is supported in $B_{(R_0+R_*)/2}$ (i.e., in a region where $A$ and $c$ are known to be analytic).
Given $\hsc \in \subsetH$ and a multi-index $\alpha$, let $\tau = \tau(\hsc, \alpha) \in [0,1]$, depending only on $\hsc$ and $\alpha$, to be fixed later. By letting $t\mapsto t\hsc^2$ in Theorem \ref{thm:heat}, we see that the heat-flow estimate (\ref{eq:heatflow}) is satisfied with
$D(\alpha) := \partial^\alpha$,
$$
C(\alpha, \hbar, t) := 
 \exp\big((\hsc^2t)^{-\EMZ}\big)
  \abs{\alpha}!\,  C^{\abs{\alpha}}  \big(\hsc^2 t)^{(\EMZ-1)|\alpha|/2}
 \quad\tand\quad I(\hbar, \alpha) := (0, \hbar ^{-2}];
$$
note that the heat-flow given by the functional calculus, appearing in (\ref{eq:heatflow}), is indeed the solution of the heat equation; see, e.g., \cite[Theorem  VIII.7]{ReeSim72}. 

  We can therefore apply Corollary \ref{cor:heat} with an arbitrary $R_{\sharp}>R$,
 and we obtain $u_{H^2}\in \mathcal{D}^\sharp$ and $\ulow \in \mathcal{D}^{\sharp,\infty}_\hsc$
 with $u_\mathcal A = u^{R_0}_{\mathcal A}+ u^{\infty}_{\mathcal A}$
 satisfying  (\ref{eq:maindec}),  (\ref{eq:decHF}), 
 \eqref{eq:decLF0}, and the bounds (\ref{eq:decLFheat1})--(\ref{eq:decLFheat3}).  Observe that $u_{H^2}$ and $u_\mathcal A $ satisfy the Dirichlet boundary condition (\ref{eq:Dirtr0}) since they are in $\mathcal D^\sharp$ \eqref{eq:defdomsharp}. 
 
The low-frequency bounds (\ref{eq:decLFheat2})--(\ref{eq:decLFheat3}) give directly the low-frequency bound away from the obstacle (\ref{eq:decLFDir2}) and the error bound (\ref{eq:decLFDirRes}). 
The rest of the proof therefore consists in obtaining 
the low-frequency bound near the obstacle  (\ref{eq:decLFDir1}) from (\ref{eq:decLFheat1}) and the high-frequency bound (\ref{eq:decHFDir}) from  (\ref{eq:decHF}).

To obtain (\ref{eq:decLFDir1}), 
by (\ref{eq:decLFheat1}), we only have to show that, for some $\EMZ\in[0,1]$ and $\mathcal{C}>0$,
\begin{equation}\label{eq:heat_imp0}
\inf_{t\in (0, \hbar^{-2}]} \left(
 \exp\big[(\hsc^2t)^{-\EMZ} +\Lambda t \big]
  \abs{\alpha}!\,  C^{\abs{\alpha} \big(\hsc^2 t)^{(\EMZ-1)|\alpha|/2}}\right)
\leq \mathcal{C}^{|\alpha|} \max \big\{ |\alpha|^{|\alpha|}, \hbar^{-|\alpha| }\big\}.
\end{equation}
We first prove \eqref{eq:heat_imp0} when $|\alpha|\geq \hsc^{-1}$, i.e., when the max on the right equals $\mathcal{C}^{|\alpha|} \alpha^{|\alpha|}$.
If $\EMZ=1$ and $t=\hsc^{-1}$, then the quantity in the infimum on the left-hand side of \eqref{eq:heat_imp0} equals 
\beqs
\exp\big[(1+\Lambda)\hsc^{-1} \big]
  \abs{\alpha}!\,  C^{\abs{\alpha}} \leq (\widetilde{C})^{|\alpha|}|\alpha|^{|\alpha|}
\eeqs
(by Stirling's formula) as required.

To prove \eqref{eq:heat_imp0} when $|\alpha|\leq \hsc^{-1}$, we seek to choose $t$ and $\EMZ$ such that 
\beq\label{eq:MT1}
(\hsc^2 t)^{(\EMZ-1)|\alpha|/2} = \hsc^{-|\alpha|} |\alpha|^{-|\alpha|} \quad
\tand\quad t = (\hsc^2 t)^{-\EMZ}.
\eeq
Under the second equality in \eqref{eq:MT1}, the left-hand side of the first equality becomes $\hsc^{-|\alpha|} t^{-|\alpha|}$; we therefore let $t= |\alpha|$, which is allowed since $|\alpha| \leq \hsc^{-1}\leq \hsc^{-2}$.
We now choose $\EMZ$ such that the  second equality in \eqref{eq:MT1} holds; i.e.,
\beqs
\EMZ= \frac{\log|\alpha|}{\log( \hsc^{-2} |\alpha|^{-1})}.
\eeqs
When $1\leq |\alpha|\leq \hsc^{-1}$, $0\leq \EMZ\leq 1$, and so this choice of $\EMZ$ is allowed. Under the equalities in \eqref{eq:MT1}, the quantity in the infimum on the left-hand side of \eqref{eq:heat_imp0} equals 
\beqs
\exp\big[(1+\Lambda)|\alpha| \big]
|\alpha|! 
  C^{\abs{\alpha}} 
    \hsc^{-|\alpha|}   |\alpha|^{-|\alpha|}
  \leq (\widetilde{C})^{|\alpha|}\hsc^{-|\alpha|},
\eeqs
which is the right-hand side of \eqref{eq:heat_imp0} when $|\alpha|\leq \hsc^{-1}$.
We have therefore proved \eqref{eq:heat_imp0}, and thus the low-frequency bound near the obstacle (\ref{eq:decLFDir1}).

We now complete the proof by proving the high-frequency bound (\ref{eq:decHFDir}). The bound (\ref{eq:decHF}) implies that 
\begin{equation*}
\Vert u_{H^2} \Vert_{L^2(\mathbb T^d_{R_{\sharp}}\backslash \obstacle_-)} +
k^{-2} \Vert \nabla\cdot(A\nabla u_{H^2}) \Vert_{L^2(\mathbb T^d_{R_{\sharp}}\backslash \obstacle_-)} \lesssim k^{-2} \Vert f \Vert_{L^2(B_R \cap \obstacle_+)},
\end{equation*}
and then Green's first identity (see, e.g., \cite[Lemma 4.3]{Mc:00}) and the fact that $A$ satisfies \eqref{eq:Aelliptic} imply that 
\begin{equation}\label{eq:dirH12}
\Vert u_{H^2} \Vert_{L^2(\mathbb T^d_{R_{\sharp}}\backslash \obstacle_-)} + k^{-1}\Vert \nabla u_{H^2} \Vert_{L^2(\mathbb T^d_{R_{\sharp}}\backslash \obstacle_-)}  +  k^{-2} \Vert \nabla\cdot(A\nabla u_{H^2})\Vert_{L^2(\mathbb T^d_{R_{\sharp}}\backslash \obstacle_-)} \lesssim k^{-2} \Vert f \Vert_{L^2(B_R \cap \obstacle_+)};
\end{equation}
see, e.g., \cite[Lemma 3.10]{GrPeSp:19}.
That is, (\ref{eq:decHFDir}) holds for $|\alpha| = 0$ and $1$. To obtain (\ref{eq:decHFDir}) for $|\alpha| = 2$, we combine (\ref{eq:dirH12}) with the $H^2$ regularity result of, e.g., \cite[Part (i) of Theorem 4.18, pages 137-138]{Mc:00}, applied with $\Omega_1 = B_R \cap \obstacle_+$ and $\Omega_2 = B_{(R+R_{\sharp})/2} \cap \obstacle_+$.
Finally, the fact that $ u^{R_0}_{\mathcal A} $ is analytic in  $B_{\RlocA}$ and $u^{\infty}_{\mathcal A}$ is analytic in $(B_{\RfarA})^c$ 
follows from Lemma \ref{lem:analytic} and the bounds \eqref{eq:decLFDir1} and \eqref{eq:decLFDir2}, respectively.

\subsection{Proof of Theorem \ref{thm:transmission}}
The plan is to apply Corollary \ref{cor:reg}. Let $\hsc := k^{-1}$, $g := \hsc^2 f$, and define $\mathcal H$ and $P_{\hsc}$ as in Lemma \ref{lem:obstacle}. By Lemma \ref{lem:obstacle}, $P_{\hsc}$ is a semiclassical black-box operator on $\mathcal H$. 

The assumption that $\Csol(k)$ is polynomially bounded means that \eqref{eq:res} holds with $\subsetH$ given by \eqref{eq:H}
and thus we only need to show that the regularity estimate \eqref{eq:reg} is satisfied for appropriate $D(\alpha), C_\ell(\alpha,\hsc)$, and $L(\alpha)$. 

We claim that for $n$ even with $n\leq 2m$
\beq\label{eq:induction1}
\Vert w \Vert_{H^{n}(\obstacle_-) \oplus
  H^{n}(\mathbb T ^d_{R_{\sharp}}\cap \obstacle_+)} \leq
\sum_{\ell=0}^{n/2}
  \widetilde{C}_\ell(n)
\N{ ( \nabla \cdot (A\nabla ) )^\ell w }_{L^2(\obstacle_-)\oplus
  L^2(\mathbb T ^d_{R_{\sharp}}\cap \obstacle_+)}  \quad\tfa w\in \mathcal{D}^{\sharp, \infty}_\hsc,
\eeq
where $\widetilde{C}_\ell(n)$ also depends on $\obstacle_-, A$,
and $c$. If \eqref{eq:induction1} holds, then the regularity estimate
(\ref{eq:reg}) is satisfied with (i) $D(\alpha) :=
(\partial^\alpha|_{\obstacle_-}, \partial^\alpha|_{\obstacle_+})$,
(ii) $\frak A$ consisting of multi-indices $\alpha$ such that
$|\alpha|$ is even and $|\alpha|\leq 2m$, (iii)
$L(\alpha):=|\alpha|/2,$ and (iv) 
\beq\label{eq:Cell}
C_\ell(\alpha,\hsc):=\hsc^{-2\ell}\widetilde{C}_\ell(|\alpha|).
\eeq
We assume that \eqref{eq:induction1} holds, and show how the result of the theorem follows from Corollary \ref{cor:reg}.
Applying this corollary, we obtain $u_{H^2}, u_\mathcal A$ satisfying  (\ref{eq:maindec}),  (\ref{eq:decHF}), and (\ref{eq:decLFreg}). Observe that $u_{H^2}$ and $u_{\mathcal A}$ satisfy the transmission conditions (\ref{eq:trcont}) since they are in $\mathcal D^\sharp$.
By \eqref{eq:Cell}, there exists $C_2=C_2(m)>0$ such that, for $|\alpha|\leq 2m$,
\beqs
\sum_{\ell=0}^{L(\alpha)} C_\ell(\alpha, \hsc) \leq C_2(m) \hsc^{-|\alpha|}.
\eeqs
The low-frequency bound (\ref{eq:decLFreg}) therefore gives (\ref{eq:decLFTR})
for all $\alpha\in \frak A$, i.e., for all $\alpha$ with $|\alpha|$ even and $\leq 2m$. The bound (\ref{eq:decLFTR}) then holds for all $\alpha$ with $|\alpha|\leq 2m$ by interpolation (see, e.g., \cite[Theorem B.8]{Mc:00}, \cite[\S4.2]{ChHeMo:15}).
Finally, (\ref{eq:decHFTR})
follows from the high-frequency estimate (\ref{eq:decHF}), together with Green's identity and (\ref{eq:induction1}) applied with $n = 2$ (similar to the end of the proof of Theorem \ref{thm:Dirichlet}).

We therefore only need to prove \eqref{eq:induction1}. The two ingredients to do this are the regularity result 
\beq\label{eq:ingredient1}
\N{v}_{H^{n+2}(\obstacle_-) \oplus H^{n+2}(\mathbb T ^d_{R_{\sharp}}\cap \obstacle_+) }
\lesssim \N{ \nabla \cdot (A\nabla  v) }_{H^n(\obstacle_-)\oplus
  H^n(\mathbb T ^d_{R_{\sharp}}\cap \obstacle_+)} 
  + \N{ v }_{H^1(\obstacle_-)\oplus
  H^1(\mathbb T ^d_{R_{\sharp}}\cap \obstacle_+)}
\eeq
for all integers $n\leq 2m-2$,
and the bound 
\beq\label{eq:ingredient2}
\N{ v }_{H^1(\obstacle_-)\oplus
  H^1(\mathbb T ^d_{R_{\sharp}}\cap \obstacle_+)}
\lesssim   \N{ \nabla \cdot (A\nabla v) }_{L^2(\obstacle_-)\oplus
  L^2(\mathbb T ^d_{R_{\sharp}}\cap \obstacle_+)} + \N{ v }_{L^2(\obstacle_-)\oplus
  L^2(\mathbb T ^d_{R_{\sharp}}\cap \obstacle_+)},
  \eeq
where both bounds are valid for all $v\in \mathcal{D}^\sharp$, and the omitted constants in both depend on $A, c,$ and $\beta$.

The bound \eqref{eq:ingredient2} is proved using Green's first identity (see, e.g., \cite[Lemma 4.3]{Mc:00}), the fact that $v$ satisfies the transmission conditions in \eqref{eq:domain_transmission}, 
and the fact that $A$ satisfies \eqref{eq:Aelliptic}; see, e.g., \cite[Lemma 3.10]{GrPeSp:19} for an analogous bound in $\Rea^d$ for the case $\beta=1$.

Regarding \eqref{eq:ingredient1}: standard elliptic regularity results imply that, given $\Omega_1,\Omega_2$ with $\obstacle_-\Subset \Omega_1\Subset\Omega_2\Subset B_{R^\sharp}$, 
\begin{align}\nonumber
\Vert v \Vert_{H^{n+2}(\obstacle_-) \oplus H^{n+2}(\Omega_1\cap \obstacle_+)} &\lesssim
\Vert \nabla \cdot (A\nabla v) \Vert_{H^n(\obstacle_-)\oplus H^n(\Omega_2\cap \obstacle_+)} + \Vert v  \Vert_{H^1(\obstacle_-)\oplus H^1(\Omega_2\cap \obstacle_+)} \\
&\leq  \Vert \nabla \cdot (A\nabla v) \Vert_{H^n(\obstacle_-)\oplus H^n(\mathbb T ^d_{R_{\sharp}}\cap \obstacle_+)} + \Vert v  \Vert_{H^1(\obstacle_-)\oplus H^1(\mathbb T ^d_{R_{\sharp}}\cap \obstacle_+)},\label{eq:Tmainevent}
\end{align}
for all $v\in \mathcal{D}^\sharp$ and integers $n\leq 2m-2$, where the omitted constant depends on $A, c, \beta$;
see, e.g., \cite[Theorem 4.20]{Mc:00}, \cite[Theorem 5.2.1, Part (i)]{CoDaNi:10}.
Since the torus is compact (and is thus covered by a finite number of $\Omega_1$s),
 \eqref{eq:Tmainevent} holds with the left-hand side replaced by
$\Vert v \Vert_{H^{n+2}(\obstacle_-) \oplus
  H^{n+2}(\obstacle_+\cap \mathbb T^d_{R_\sharp})}$ and \eqref{eq:ingredient1} follows.

We now use \eqref{eq:ingredient1} and \eqref{eq:ingredient2} to prove \eqref{eq:induction1} by induction. 
The bound \eqref{eq:induction1} with $n=2$ follows from 
combining \eqref{eq:ingredient1} with $n=0$ and $v=w$ and \eqref{eq:ingredient2} with $v=w$ (observe that choosing $v=w$ in both is allowed since $w \in \mathcal{D}^\sharp$).
We now assume that we have proved \eqref{eq:induction1} for $n$ even and $n\leq 2q$ for some $0\leq q\leq m-1$; i.e.,
\beq\label{eq:induction2}
\Vert w \Vert_{H^{2q}}\lesssim
\sum_{\ell=0}^{q} 
\N{ ( \nabla \cdot (A\nabla ) )^\ell w }_{L^2}  \quad\tfa w\in \mathcal{D}^{\sharp, \infty}_\hsc,
\eeq
where we have omitted the $q$-dependent constants and 
the domains of the norms for brevity.

Applying \eqref{eq:ingredient1} with $n=2q$ and $v=w$, we have
\beq\label{eq:induction3}
\N{w}_{H^{2q+2}}
\lesssim \N{ \nabla \cdot (A\nabla  w) }_{H^{2q}}
  + \N{ w }_{H^1}
\eeq
(again omitting the domains of the norms for brevity).
The desired bound \eqref{eq:induction1} with $n=2q+2$ then follows by using in \eqref{eq:induction3} the inequality \eqref{eq:induction2} with $w$ replaced by $\nabla \cdot (A\nabla  w)$ (which is allowed since 
$w \in \mathcal{D}^{\sharp,\infty}_\hsc$ implies that $P^\sharp_\hsc w \in \mathcal{D}^{\sharp,\infty}_\hsc$ by \eqref{eq:Dinfty}),
and then using \eqref{eq:ingredient2} with $v=w$.

\subsection{Proof of Theorem \ref{thm:LSW3}}\label{sec:LSW3proof}

Let $\hsc := k^{-1}$, $g := \hsc^2 f$, and define $\mathcal H$ and $P_{\hsc}$ as in Lemma \ref{lem:obstacle} with $\obstacle_- = \emptyset$. By Lemma \ref{lem:obstacle}, $P_{\hsc}$ is a semiclassical black-box operator on $\mathcal H$. The reference operator is given by $P^\sharp_\hsc = -\hsc^{2} c^2 \nabla \cdot (A\nabla )$, acting on the torus $\mathbb T^d_{R_\sharp}$.

The assumption that $\Csol(k)$ is polynomially bounded means that 
the bound \eqref{eq:res} holds with $\subsetH$ given by \eqref{eq:H}; i.e., the assumption in Point 1 of Theorem \ref{thm:mainbb} is satisfied.

We now construct $\mathcal E$ and $E $ satisfying the assumptions in Point 2 of Theorem \ref{thm:mainbb}.
Let $\Lambda>0$ be as in Theorem \ref{thm:mainbb}, and let $\mathcal E \in C^\infty_{\rm comp}(\mathbb R)$ be such that $\mathcal E = 1$ in $[-\Lambda, \Lambda]$, and $\mathcal E = 0$ outside
$[-2\Lambda, 2\Lambda]$. The results of Helffer-Robert  \cite{HeRo:83} (see the account in \cite{Rob87}) imply that
$\mathcal E (P^\sharp_\hsc) = \mathcal E(-\hsc^{2} c^2 \nabla \cdot \big(A\nabla))$ is a pseudo-differential operator on the torus $\mathbb T^d_{R_\sharp}$. Then, the same argument as in the proof of Lemma \ref{thm:funcloc2} shows that
$$
\operatorname{WF}_\hsc  \mathcal E\big(-\hsc^{2} c^2 \nabla \cdot (A\nabla)\big) \subset  \operatorname{supp} \mathcal E \circ q,
$$
where $q(x, \xi) = c(x)^2  \langle A(x) \xi ,\xi \rangle$
is the semi-classical principal symbol of $-\hsc^{2} c^2 \nabla \cdot (A\nabla)$. Hence, since $\mathcal E$ is compactly supported and $A$ satisfies \eqref{eq:Aelliptic}, there exists $\Lambda_0>0$ such that
\begin{equation} \label{eq:cor:1}
\operatorname{WF}_\hsc  \mathcal E\big(-\hsc^{2} c^2 \nabla \cdot (A\nabla)\big) \subset \mathbb T^d_{R_\sharp} \times B\left(0, \frac{\Lambda_0}{2}\right).
\end{equation}
Let $\widetilde{\varphi} \in C^\infty_{\rm comp}$ be compactly supported in $B(0, \Lambda_0^2)$ and equal to one on $B(0,\Lambda_0^2/4)$. By (\ref{eq:cor:1}) and (\ref{eq:support}), $\operatorname{WF}_\hsc \big( 1 - \Optorus(\widetilde{\varphi}(|\xi|^2)) \big) \cap\operatorname{WF}_\hsc  \mathcal E(-\hsc^{2} c^2 \nabla \cdot \big(A\nabla))= \emptyset$, therefore, by \eqref{eq:WFdis},
\begin{equation*} 
 \mathcal E\big(-\hsc^{2} c^2 \nabla \cdot (A\nabla)\big)  = \Optorus(\widetilde{\varphi}(|\xi|^2))   \mathcal E\big(-\hsc^{2} c^2 \nabla \cdot (A\nabla )\big) + O(\hsc^\infty)_{\Psi^{-\infty}_\hsc}.
\end{equation*}
Then, by Lemma \ref{lem:toruscalculus},
\begin{equation} \label{eq:cor:2}
 \mathcal E(-\hsc^{2} c^2 \nabla \cdot \big(A\nabla ))  = \widetilde{\varphi}(-\hsc^2\Delta)  \mathcal E(-\hsc^{2} c^2 \nabla \cdot \big(A\nabla)) + O(\hsc^\infty)_{\Psi^{-\infty}_\hsc}.
\end{equation}
We now define
\beq\label{eq:LSW3E0}
E  :=  \widetilde{\varphi}\big(-\hsc^2\Delta)  \mathcal E(-\hsc^{2} c^2 \nabla \cdot (A\nabla )\big),
\eeq
and thus (\ref{eq:cor:2}) implies that
$$
 \mathcal E(P^\sharp_\hsc) = E  + \residual.
$$
We now need to show that a low-frequency estimate of the form \eqref{eq:lowenest} is satisfied. Since $\widetilde{\varphi}$ is compactly supported in $B(0, \Lambda_0^2)$, the definition of $E $ \eqref{eq:LSW3E0} and the same argument used to show the bound \eqref{eq:low:A:3} imply that 
$$
\Vert \partial^\alpha E  v \Vert_{L^2(\mathbb T^d_{R_\sharp})} \leq  \Lambda_0^{|\alpha|} \hsc^{-|\alpha|} \Vert  \mathcal E(-\hsc^{2} c^2 \nabla \cdot \big(A\nabla v)) v\Vert_{L^2(\mathbb T^d_{R_\sharp})} \quad \tfa v \in L^2(\mathbb T^d_{R_\sharp}) \text{ and multi-indices } \alpha.
$$
Then, since $\mathcal E(-\hsc^{2} c^2 \nabla \cdot \big(A\nabla)) \in \Psi^{-\infty}_\hsc(\mathbb T^d_{R_\sharp})$, there exists $C>0$ such that 
$$
\Vert \partial^\alpha E  v \Vert_{L^2(\mathbb T^d_{R_\sharp})} \leq C \Lambda_0^{|\alpha|} \hsc^{-|\alpha|} \Vert v \Vert_{L^2(\mathbb T^d_{R_\sharp})}  \quad \tfa v \in L^2(\mathbb T^d_{R_\sharp})\text{ and multi-indices } \alpha.
$$ 
Therefore, the assumption in Point 2 of Theorem \ref{thm:mainbb} is satisfied with $D(\alpha):=\partial^\alpha$, $C_{\mathcal E}(\alpha, \hsc) := C\Lambda_0^{|\alpha|} \hsc^{-|\alpha|} $ and $\rho = 1$. The result then follows from Theorem \ref{thm:mainbb}; indeed, the bound \eqref{eq:decLFLSW3} follows immediately from \eqref{eq:decLF5}, and 
 \eqref{eq:decHFLSW3} follows from \eqref{eq:decHF} after using Green's identity and elliptic regularity in the same way as at the end of the proof of Theorem \ref{thm:Dirichlet} -- see \eqref{eq:dirH12} and the surrounding text.

\section{Proofs of Theorems \ref{thm:FEM1} and \ref{thm:FEM2} and Corollary \ref{cor:1} (the frequency-explicit results about the convergence of the FEM)}
\label{sec:FEM}

\subsection{Recap of FEM convergence theory}

The two ingredients for the proof of Theorems \ref{thm:FEM1} and \ref{thm:FEM2} are 
\bit
\item Lemma \ref{lem:Schatz}, which is the standard duality argument giving a condition for quasioptimality to hold in terms of how well the solution of the adjoint problem 
is approximated by the finite-element space (measured by the quantity  $\eta(V_N)$ defined by \eqref{eq:etadef}), and  
\item Lemma \ref{lem:MS} that bounds $\eta(V_N)$ using the decomposition from Theorems \ref{thm:Dirichlet} and \ref{thm:transmission}.
\eit 
Regarding Lemma \ref{lem:Schatz}:  this argument came out of ideas introduced in \cite{Sc:74}, was then formalised in \cite{Sa:06}, and has been used extensively in the analysis of the Helmholtz FEM; see, e.g., \cite{AzKeSt:88, IhBa:95a, Me:95, Sa:06, MeSa:10, MeSa:11, ZhWu:13, Wu:14, DuWu:15, ChNi:18, LiWu:19, ChNi:20, GaChNiTo:18, GrSa:20, GaSpWu:20, LaSpWu:22}.

Before stating Lemma \ref{lem:Schatz} we need to introduce some notation.
Let $\Ccont = \Ccont(A, c^{-2}, R, k_0)$ be the \emph{continuity constant} of the sesquilinear form $a(\cdot,\cdot)$ (defined in \eqref{eq:sesqui}) in the norm $\|\cdot\|_{H^1_k(B_R\cap \obstacle_+)}$; i.e.
\beqs
|a(u,v)|\leq \Ccont \N{u}_{H^1_k(B_R\cap \obstacle_+)} \N{v}_{H^1_k(B_R\cap \obstacle_+)} \quad\tfa u, v \in H^1(B_R\cap \obstacle_+).
\eeqs
By the Cauchy-Schwarz inequality and \eqref{eq:CDTN1},
\beq\label{eq:Cc}
\Ccont \leq \max\{ A_{\max}, c_{\min}^{-2} \} + \CDTN.
\eeq
The following definitions are stated for the sesquilinear form of the Dirichlet problem \eqref{eq:sesqui}. For the sesquilinear form of the transmission problem with the transmission parameter $\beta=1$, one only needs to replace $B_R\cap \obstacle_+$ by $B_R$ and define $c$ to be 
equal to one in $B_R\cap \obstacle_+$.

\begin{definition}[The adjoint sesquilinear form $a^*(\cdot,\cdot)$]\label{def:adjoint}
The adjoint sesquilinear form, $a^*(u,v)$, to the sesquilinear form $a(\cdot,\cdot)$ defined in \eqref{eq:sesqui} is given by
\beqs
a^*(u,v) := \overline{a(v,u)}= \int_{B_R\cap \obstacle_+} 
\left((A \gu)\cdot\gvb
 - \frac{k^2}{c^2} u\vb\right) - \big\langle u,\DtN(v)\big\rangle_{\partial B_R}.
\eeqs
\end{definition}

\begin{definition}[Adjoint solution operator $\cS^*$]
Given $f\in L^2(B_R\cap \obstacle_+)$, let $\cS^*f$ be defined as the solution of the variational problem
\beq\label{eq:S*vp}
\tfind \cS^*f \in H^1(B_R\cap \obstacle_+) \quad\tst\quad a^*(\cS^*f,v) = \int_{B_R\cap \obstacle_+} f\, \overline{v} \quad\tfa v\in H^1(B_R\cap \obstacle_+).
\eeq
\end{definition}

Green's second identity applied to solutions of the Helmholtz equation satisfying the Sommerfeld radiation condition \eqref{eq:src} implies that
$\big\langle \DtN \psi, \overline{\phi}\big\rangle_{\GR} =\big\langle \DtN \phi, \overline{\psi}\big\rangle_{\GR} $ for all $\phi,\psi\in H^{1/2}(\GR)$ 
(see, e.g., \cite[Lemma 6.13]{Sp:15}); thus $a(\overline{v},u) = a(\overline{u},v)$ and so the definition \eqref{eq:S*vp} implies that
\beq\label{eq:S*fkey}
a(\overline{\cS^*f}, v)= (\overline{f},v)_{L^2(\OR)}\quad\tfa v\in H^1(B_R\cap \obstacle_+).
\eeq

\begin{definition}[$\eta(V_N)$]
Given a sequence $(V_N)_{N=0}^\infty$ of finite-dimensional subspaces of 
let
\beq\label{eq:etadef}
\eta(V_N):= \sup_{0\neq f\in L^2(B_R\cap \obstacle_+)}
\min_{v_N\in V_N} \frac{\N{S^*f-v_N}_{H^1_k(B_R\cap \obstacle_+)}}{\big\|
f\big\|_{L^2(B_R\cap \obstacle_+)}}.
\eeq
\end{definition} 

\ble[Conditions for quasioptimality]\label{lem:Schatz}
If 
\beqs
k\,\eta(V_N) \leq \frac{1}{\Ccont} \sqrt{ \frac{A_{\min}}{2\big( A_{\min}+c^{-2}_{\min}} \big)},
\eeqs
then the Galerkin equations \eqref{eq:FEM} have a unique solution which satisfies
\beqs
\N{u-u_N}_{H^1_k(B_R\cap \obstacle_+)}
 \leq \frac{2\Ccont}{A_{\min}}\left(\min_{v_N\in V_N} \N{u-v_N}_{H^1_k(B_R\cap \obstacle_+)}\right).
\eeqs
\ele

\bpf[References for the proof] See, e.g., \cite[Lemma 6.4]{LaSpWu:22}.
\epf

\

The following two lemmas are proved in the next subsections.

\ble[Bound on $\eta(V_N)$ for the exterior Dirichlet problem]\label{lem:MS}
Let $d=2$ or $3$. Suppose that $\obstacle_-, A, c,R, \RfarA$, and $\RlocA$ are as in Theorem \ref{thm:Dirichlet}, and that $\Csol(k)$ is polynomially bounded for $k\in K$.

Let $(V_N)_{N=0}^\infty$ be the piecewise-polynomial approximation spaces described in \cite[\S5]{MeSa:10}, \cite[\S5.1.1]{MeSa:11}. 

Given $k_0>0$ and $N>0$ there exist 
\bit
\item
$\mathcal{C}_1,\mathcal{C}_2, \sigma>0$, depending on $A, c, R$, $d$, and $k_0$, but independent of $k$, $h$, $p$, and $N$, and
\item $C_N$ depending on $A, c, R$, $d$, $k_0$, and $N$, but independent of $k$, $h$, $p$,
\eit
such that
\beq\label{eq:etabound}
k\,\eta(V_N) \leq \mathcal{C}_1 \frac{hk}{p}\left(1+ \frac{hk}{p}\right) + \mathcal{C}_2
k^{M}
 \left(
\left(\frac{h}{h+\sigma}\right)^p  
+k\left(
 \frac{hk}{\sigma p}\right)^p\right)
+C_N k^{1-N}
  \quad\,\, k \in K\cap[k_0,\infty).
\eeq
\ele

\ble[Bound on $\eta(V_N)$ for the transmission problem]\label{lem:etaT}
Let $d=2$ or $3$ and let $\beta=1$. Suppose that $A, c,$ and $\obstacle_-$ are as in Definition \ref{def:transmission} and, given an integer $p$, satisfy the regularity assumptions in Theorem \ref{thm:FEM2}.
Suppose that $\Csol(k)$ is polynomially bounded for $k\in K$.

Let $(V_N)_{N=0}^\infty$ be a sequence of  piecewise-polynomial approximation spaces of degree $p$ satisfying Assumption \ref{ass:VN}.

Given $k_0>0$, there exist $\widetilde{\mathcal{C}}_1,\widetilde{\mathcal{C}}_2$, depending on $A, c, R$, $d$, $k_0$, and $p$, but independent of $k$ and $h$, such that 
\beq\label{eq:etaboundT}
k\,\eta(V_N) \leq \big(1 +hk\big)\Big(\widetilde{\mathcal{C}}_1 hk + \widetilde{\mathcal{C}}_2 \, 
k^{M+1} (hk)^{p}\Big)
\quad\tfa k \in K\cap [k_0,\infty).
\eeq
\ele

\bpf[Proof of Theorems \ref{thm:FEM1}/\ref{thm:FEM2} assuming Lemmas \ref{lem:MS}/\ref{lem:etaT}]
Theorem \ref{thm:FEM2} follows immediately by combining Lemmas \ref{lem:Schatz} and \ref{lem:etaT}
and the inequality \eqref{eq:Cc}.

Theorem \ref{thm:FEM1} follows in a similar way
(and is essentially the same as the proof of \cite[Theorem 5.8]{MeSa:11}), except that we first choose $N>1$, and then let $k_1>0$ be such that 
\beqs
C_N k^{1-N} \leq 
 \frac{1}{2\Ccont} \sqrt{ \frac{A_{\min}}{2\big(A_{\min}+c^{-2}_{\min} \big)}} \quad\tfa k \geq k_1.
\eeqs
Theorem \ref{thm:FEM1} then follows by 
using this bound in \eqref{eq:etabound} and then combining the resulting inequality with Lemma \ref{lem:Schatz} and the inequality \eqref{eq:Cc}.
\epf

\subsection{Proof of Lemma \ref{lem:MS}}

Given $f\in L^2(B_R\cap \obstacle_+)$, let $v= \mathcal{S}^*f$. By \eqref{eq:S*fkey} and Theorem \ref{thm:Dirichlet}, $v = v_{H^2} + v_{\cA}$, where $v_{H^2}$ and $v_{\cA}$ satisfy the bounds \eqref{eq:decHFDir}--\eqref{eq:decLFDirRes} with $u$ replaced by $v$. 

The proof of Lemma \ref{lem:MS} is very similar to the proofs of \cite[Theorem 5.5]{MeSa:10} and \cite[Proposition 5.3]{MeSa:11} (covering the constant-coefficient Helmholtz equation in, respectively, $\Rea^d$ and the exterior of an analytic Dirichlet obstacle). 

The only difference is that in \cite{MeSa:10}, \cite{MeSa:11} the function $v_\cA$ is analytic on the whole of $B_R\cap\obstacle_+$, whereas here $v_\cA= v_\cA^{R_0}+ v_\cA^\infty$ with $v_\cA^{R_0}$ and $v_\cA^\infty$ analytic in subsets of the domain and $O(k^{-\infty})$ in the complements of these subsets; see \eqref{eq:decLFDir1}-\eqref{eq:decLFDirRes} and Figure \ref{fig:line2}.
The consequence is that $C_N k^{1-N}$ appears on the right-hand side of \eqref{eq:etabound}, but this term is not present on the right-hand sides of the analogous bounds in  \cite[Theorem 5.5]{MeSa:10} and \cite[Proposition 5.3 and Equation 5.11]{MeSa:11}. Since this term can be made arbitrarily-small for $k$ sufficiently large, the only consequence is that Lemma \ref{lem:MS} and Theorem \ref{thm:FEM1} are valid for $k$ sufficiently large (as opposed to for all $k\geq k_0$ with $k_0$ arbitrary).

Exactly as in the proof of \cite[Theorem 5.5]{MeSa:10}, there exists $\mathcal{C}_3>0$ (dependent only on the constants in \cite[Assumption 5.2]{MeSa:10} defining the element maps from the reference element) such that 
\beq\label{eq:bae1}
\min_{w_N\in V_N}\N{v - w_N}_{H^1_k(B_R\cap \obstacle_+)}
\leq  \mathcal{C}_3
\frac{h}{p}\left(1 + \frac{hk}{p}\right)  |v|_{H^2(B_R\cap \obstacle_+)},
\eeq
for all $v\in H^2(B_R\cap\obstacle_+)$; recall that this result follows from the polynomial-approximation result of \cite[Theorem B.4]{MeSa:10} and the definition \eqref{eq:1knorm} of the norm $\|\cdot\|_{H^1_k}$. 
Applying the bound \eqref{eq:bae1} to $\vhigh$ and using \eqref{eq:decHFDir} with $|\alpha|=2$, we obtain 
\beqs
\min_{w_N\in V_N}\N{\vhigh - w_N}_{H^1_k(B_R\cap \obstacle_+)}
 \leq
 \mathcal{C}_3 \,C_1\,
 \frac{h}{p}\left(1 + \frac{hk}{p}\right) \N{f}_{L^2(B_R\cap \obstacle_+)};
\eeqs
we then let $\mathcal{C}_1:= C_1\, \mathcal{C}_3$.

To prove \eqref{eq:etabound}, therefore, we only need to show that   
\beq\label{eq:intermediate1}
\min_{w_N\in V_N}\N{\vlow - w_N}_{H^1_k(B_R\cap \obstacle_+)}
\leq\left(
\mathcal{C}_2 \,
k^{M}
 \left(
\left(\frac{h}{h+\sigma}\right)^p  
+
k\left( \frac{hk}{\sigma p}\right)^p\right)
  +C_N k^{-N}
  \right)
\N{f}_{L^2(B_R\cap \obstacle_+)},
\eeq
for some $\mathcal{C}_2>0$ independent of $k,h,p,$ and $N$ and some $C_N>0$ independent of $k,h,$ and $p$.
Recall the regions where $\vlow^{R_0}$ and $\vlow^\infty$ are analytic (see Figure \ref{fig:line2}).
Given $V_N$, choose $D_1$ such that (i) $D_1$ is a union of elements of the triangulation associated with $V_N$ and (ii) $B_{\RlocB}\Subset D_1 \Subset B_{\RlocA}$. Thus, by 
\eqref{eq:decLFDirRes}, 
\begin{align*}
\min_{w_N\in V_N}\big\|\vlow^{R_0} - w_N\big\|_{H^1_k(B_R\cap \obstacle_+)} 
&\leq \min_{w_N\in V_N}\big\|\vlow^{R_0} - w_N\big\|_{H^1_k(D_1\cap \obstacle_+)} + \big\|\vlow^{R_0}\big\|_{H^1_k(B_R \cap (D_1)^c)}\\
&\leq \min_{w_N\in V_N}\big\|\vlow^{R_0} - w_N\big\|_{H^1_k(D_1\cap \obstacle_+)} + C_N' k^{-N}\N{f}_{L^2(B_R\cap \obstacle_+)}
\end{align*}
for some $C_N'>0$ independent of $k,h,$ and $p$. Similarly, with $D_2$ a union of elements of the triangulation and such that 
$B_{\RfarA}\Subset D_2 \Subset B_{\RfarB}$,
\beqs
\min_{w_N\in V_N}\big\|\vlow^{\infty} - w_N\big\|_{H^1_k(D_2\cap \obstacle_+)} 
\leq \min_{w_N\in V_N}\big\|\vlow^{\infty} - w_N\big\|_{H^1_k(B_R\cap (D_2)^c)} + C_N'' k^{-N}\N{f}_{L^2(B_R\cap \obstacle_+)}
\eeqs
for some $C_N''>0$, independent of $k,h,$ and $p$. 
To prove \eqref{eq:intermediate1}, therefore, we only need to show that 
\beq
\min_{w_N\in V_N}\big\|\vlow^{R_0} - w_N\big\|_{H^1_k(
D_1
\cap \obstacle_+)}
\leq
\frac{\mathcal{C}_2}{2} \,
k^{M}
 \left(
\left(\frac{h}{h+\sigma}\right)^p  
+
k \left(\frac{hk}{\sigma p}\right)^p\right)
\N{f}_{L^2(B_R\cap \obstacle_+)}
\label{eq:intermediate2}
\eeq
and
\beq
\min_{w_N\in V_N}\big\|\vlow^{\infty} - w_N\big\|_{H^1_k(B_R\cap (D_2
)^c)}
\leq
\frac{\mathcal{C}_2}{2} \,
k^{M}
 \left(
\left(\frac{h}{h+\sigma}\right)^p  
+
k \left(\frac{hk}{\sigma p}\right)^p\right)
\N{f}_{L^2(B_R\cap \obstacle_+)},
\label{eq:intermediate3}
\eeq
for some $\mathcal{C}_2>0$, independent of $k,h,p,$ and $N$.
Note that (i) we introduced $D_1$ and $D_2$ so that the domains on which $\vlow^{R_0}$ and $\vlow^\infty$ are approximated in \eqref{eq:intermediate2} and \eqref{eq:intermediate3} are exactly triangulated by the mesh, and (ii)  for the approximation \eqref{eq:intermediate2}, it is important that $\vlow^{R_0}= 0$ on $\partial \obstacle^+$, since the space $V_N$ has this zero Dirichlet boundary condition imposed.
 
The bounds \eqref{eq:intermediate2} and \eqref{eq:intermediate3} then follow from \cite[Proposition 5.3]{MeSa:11} (which uses \cite[Theorem 5.5]{MeSa:10}); the key point is that $\vlow^{\infty}$ and $\vlow^{R_0}$ satisfy the same type of bound -- namely that  in Part (iii) of Lemma \ref{lem:analytic} -- as $u_{\mathcal A}$ in \cite{MeSa:11} (see the second displayed equation in \cite[Theorem 4.20]{MeSa:11}, and note that $\alpha$ in \cite{MeSa:11} equals our $M$).
 
\subsection{Proof of Lemma \ref{lem:etaT}}

Given $f\in L^2(B_R)$, let $v= \mathcal{S}^*f$. By \eqref{eq:S*fkey} and Theorem \ref{thm:transmission}, $v = v_{H^2} + v_{\cA}$, where $v_{H^2}$ and $v_{\cA}$ satisfy the bounds \eqref{eq:decHFTR} and \eqref{eq:decLFTR} with $u$ replaced by $v$. 

By the definition of the $H^1_k$ norm \eqref{eq:1knorm} and the bound \eqref{eq:interp}, there exists $\Cint=\Cint(\ell,d)>0$ such that 
\beq\label{eq:interp2}
\min_{w_N \in V_N}\N{w- w_N}_{H^1_k(B_R)} \leq \Cint(\ell,d) \big(1+hk\big) h^\ell \Big( \N{w_+}_{H^{\ell+1}(B_R\cap\obstacle_+)} +
\N{w_-}_{H^{\ell+1}(\obstacle_-)} \Big)
\eeq
for all $w=(w_+, w_-) \in H^{\ell+1}(B_R\cap\obstacle_+)\times H^{\ell+1}(\obstacle_-)$.
Applying \eqref{eq:interp2} with $\ell=1$ to $v_{H^2}$ and using \eqref{eq:decHFTR} with $|\alpha|=2$, we obtain that 
\beq\label{eq:interp3}
\min_{w_N \in V_N}\N{v_{H_2}- w_N}_{H^1_k(B_R)} \leq \Cint(1,d) \big(1+hk\big)\, h\, C_1 \N{f}_{L^2(B_R)}.
\eeq

Let $\Csob(p,d)$ be such that
\beqs
\tif \quad\N{\partial^\alpha v}_{L^2}\leq C \quad \tfa \alpha \text{ with } |\alpha|\leq p, \quad\tthen 
\N{v}_{H^{p+1}}\leq \Csob(p,d)\, C;
\eeqs
i.e., $\Csob$ depends only on the normalisations in the definition of $\|\cdot\|_{H^{p+1}}$.

The regularity assumptions on $\obstacle_-,A,$ and $c$ and the regularity results of, e.g., \cite[Theorem 4.20]{Mc:00}, \cite[Theorem 5.2.1, Part (i)]{CoDaNi:10} imply that $u_{\pm, \cA} \in H^{p+1}$ for $p$ odd and $H^{p+2}$ for $p$ even.
For $p$ odd we apply Theorem \ref{thm:transmission} with $m= (p+1)/2$ and for $p$ even with $m=(p+2)/2$. In both cases, 
we apply \eqref{eq:interp2} with $\ell=p$ to $v_{\cA}=(v_{\cA,+},v_{\cA,-})$ and use \eqref{eq:decLFTR} with $|\alpha|=p+1$ to obtain that
\beq\label{eq:interp4}
\min_{w_N \in V_N}\N{v_{\cA}- w_N}_{H^1_k(B_R)} \leq \Cint(p) \big(1+hk\big) \,h^p\, \Csob(p,d)\,C_2(p)\, k^{p+M}\N{f}_{L^2(B_R)}.
\eeq
The bound on $\eta(V_N)$ in \eqref{eq:etaboundT} then follows from combining \eqref{eq:interp3} and \eqref{eq:interp4}, with $\widetilde{\mathcal{C}}_1:= \Cint(1,d) C_1$ and $\widetilde{\mathcal{C}}_2:= \Cint(p,d) \Csob(p,d) C_2$.
 
\subsection{Proof of Corollary \ref{cor:1}}

If $u$ is the solution of the plane-wave scattering problem, then
\beq\label{eq:Cosc}
|u|_{H^2(B_R)} \leq \Cosc k \N{u}_{H^1_k(B_R)}
\eeq
by \cite[Theorem 9.1 and Remark 9.10]{LaSpWu:19}, where $\Cosc$ depends on $\MA,c,d,$ and $R$, but is independent of $k$. 
The combination of \eqref{eq:Cosc} and \eqref{eq:bae1} then imply that 
\beq\label{eq:MSbae}
\min_{v_N\in V_N} \N{u-v_N}_{H^1_k(B_R)}\leq \mathcal{C}_3 \Cosc \frac{hk}{p} \left( 1 + \frac{hk}{p}\right)\N{u}_{H^1_k(B_R)}.
\eeq
Combining \eqref{eq:qo}, \eqref{eq:MSbae}, and \eqref{eq:thresholdD}, we obtain the result \eqref{eq:rel_error} with $C_6:= \mathcal{C}_3 \Cosc$.
 
\appendix

\section{Semiclassical pseudodifferential operators on the torus} \label{app:sct}

Recall that for $R_\sharp >0$ we defined the torus
$$
 \mathbb T^d_{R_{\sharp}} := {\mathbb{R}^d}/{(2R_\sharp\mathbb{Z})^d}.
 $$
This appendix reviews the material about semiclassical pseudodifferential operators on $\mathbb T^d_{R_{\sharp}}$
used in \S \ref{subsec:high}, and appearing in Lemma  \ref{thm:funcloc2}, with our default references being  \cite{Zw:12} and \cite[Appendix E]{DyZw:19}. 

\paragraph{Semiclassical Sobolev spaces.} 
We consider functions or distributions on the torus as periodic
functions or distributions on $\mathbb R^d.$  To eliminate
confusion between Fourier series and integrals, for $f \in
L^2(\torus_{R_\sharp}^d)$  we define the Fourier coefficients
$$
\widehat f(j) := \int_{\torus_{R_\sharp}^d} f(x) \overline{e_j}(x) \, dx,
$$
where $j \in \ZZ^d$ and the integral
is over the cube of side $2 R_\sharp$, and where the Fourier
basis given by the $L^2$-normalized functions
\begin{equation} \label{eq:A:eigen}
e_j(x) = (2R_\sharp)^{-d/2} \exp{\big(\ri\pi j\cdot  x/R_\sharp\big)}
\end{equation}
for $j \in \ZZ^d$.
The Fourier inversion formula is then
$$
f = \sum_{j \in \ZZ^d} \widehat f(j) e_j.
$$
The action of the operator $(\hsc D)^\alpha$ on the torus is therefore
$$
(\hsc D)^\alpha f  =\sum_{j \in \ZZ^d} (\hsc j)^\alpha \widehat f(j) e_j.
$$
We work on the spaces defined by the boundedness of these operators, namely
\beqs
H_\hsc ^ m (\mathbb T_{R_\sharp}^d):= \Big\{ u\in L^2(\mathbb T_{R_\sharp}^d),
\; \langle j \rangle^m \widehat f(j) \in \ell^2(\ZZ^d) \Big\},
\eeqs
and use the norm
\beq\label{eq:Hhnorm}
\Vert u \Vert_{H_\hsc^m(\mathbb T_{R_\sharp}^d)} ^2 :=\sum \lvert \widehat f(j)\rvert^2
\langle \hsc j\rangle^{2m};
\eeq
see \cite[\S8.3]{Zw:12}, \cite[\S E.1.8]{DyZw:19}. 
In this appendix, we abbreviate $H_\hsc ^ m (\mathbb T_{R_\sharp}^d)$ to $H_\hsc ^ m$ and $L^2(\mathbb T_{R_\sharp}^d)$ to $L^2$.

Since  these spaces are defined for positive integer $m$ by boundedness of $(hD)^\alpha$ with
$\lvert \alpha \rvert =m$ (and can be extended to $m \in \RR$ by
interpolation and duality), they agree with localized versions of the
corresponding spaces on $\mathbb R^d$ defined by semiclassical Fourier transform 
$$
\mathcal F_{\hsc}u(\xi) := \int_{\mathbb R^d} \exp\big( -\ri x \cdot \xi/\hsc\big)
u(x) \, \rd x,
$$
and 
$$
\Vert u \Vert_{H_\hsc^m(\mathbb R^d)} ^2 := (2\pi \hsc)^{-d} \int_{\mathbb R^d} \langle \xi \rangle^m  |\mathcal F_\hsc u(\xi)|^2 \, \rd \xi.
$$
We note for later use that the inverse semiclassical Fourier transform has a pre-factor of $(2 \pi \hsc)^{-d}$ in this normalisation. 

\paragraph{Phase space.}
The set of all possible positions $x$ and momenta (i.e.~Fourier variables) $\xi$ is denoted by $T^*\mathbb T_{R_\sharp}^d$; this is known informally as ``phase space". Strictly, $T^*\mathbb T_{R_\sharp}^d :=\mathbb T_{R_\sharp}^d\times (\mathbb R^d)^*$, but 
for our purposes, we can consider $T^*\mathbb T_{R_\sharp}^d$ as $\{(x,\xi)
: \bx\in \mathbb T^d_{R_\sharp}, \xi\in\Rea^d\}$.  We also use the analogous
notation for $T^* \mathbb R^d$ where appropriate.

To deal uniformly near fiber-infinity with the behavior of
functions on phase space, we also consider the \emph{radial
  compactification} in the fibers of this space,
$$
\Tbar^* \torus^d_{R_\sharp}:= \mathbb R^d \times B^d,
$$
where $B^d$ denotes the closed unit ball, considered as the closure of the
image of $\mathbb R^d$ under the radial compactification map 
$$\RC: \xi \mapsto \xi/(1+\langle \xi
\rangle);$$
see \cite[\S E.1.3]{DyZw:19}.
Near the boundary of the
ball, $\lvert \xi\rvert^{-1}\circ \RC^{-1}$ is a smooth function, vanishing to
first order at the boundary, with $(\lvert \xi\rvert^{-1}\circ \RC^{-1}, \widehat\xi\circ\RC^{-1})$
thus furnishing local coordinates on the ball near its boundary.  The boundary of the
ball should be considered as a sphere at infinity consisting of all
possible \emph{directions} of the momentum variable.  Where
appropriate (e.g., in dealing with finite values of $\xi$ only), we abuse notation by dropping the composition with $\RC$ from our
notation and simply identifying $\mathbb R^d$ with the interior of $B^d$.

\paragraph{Symbols, quantisation, and semiclassical pseudodifferential operators.} 

A symbol on $\mathbb R^d$ is a function on $T^*\mathbb{R}^d$ that is also allowed to depend on $\hsc$, and thus can be considered as an $\hsc$-dependent family of functions.
Such a family $a=(a_\hsc)_{0<\hsc\leq\hsc_0}$, with $a_\hsc \in C^\infty({\mathbb R^d})$, 
is a \emph{symbol
of order $m$} on the $\mathbb R^d$, written as $a\in S^m(\mathbb R^d)$, if for any multi-indices $\alpha, \beta$
\beqs
| \partial_x^\alpha \partial^\beta_\xi a(x,\xi) | \leq C_{\alpha, \beta} \langle \xi \rangle^{m -|\beta|} \quad\tfa (x,\xi) \in T^*\mathbb R^d \text{ and for all } 0<\hsc\leq \hsc_0,
\eeqs
where $C_{\alpha, \beta}$ does not depend on $\hsc$; see \cite[p.~207]{Zw:12}, \cite[\S E.1.2]{DyZw:19}. 

For $a \in S^m(\mathbb R^d)$, we define the \emph{semiclassical quantisation} of $a$ on $\mathbb R^d$, denoted by $\operatorname{Op}_{\hsc}(a)$
\beq \label{Oph}
\big(\operatorname{Op}_{\hsc}(a) v\big)(x) := (2\pi \hsc)^{-d} \int_{\xi \in\mathbb R^d} \int_{y \in \Rea^d} 
\exp\big(\ri (x-y)\cdot\xi/\hsc\big)\,
a(x,\xi) v(y) \,\rd y  \rd \xi;
\eeq
\cite[\S4.1]{Zw:12} \cite[Page 543]{DyZw:19}. The integral in
\eqref{Oph} need not converge, and can be understood \emph{either} as
an oscillatory integral in the sense of \cite[\S3.6]{Zw:12},
\cite[\S7.8]{Ho:83}, \emph{or} as an iterated integral, with the $y$
integration performed first; see \cite[Page 543]{DyZw:19}.  It can be
shown that for any symbol $a,$ $\Op_\hsc(a)$ preserves Schwartz
functions, and extends by duality to act on tempered distributions
\cite[\S4.4]{Zw:12}

We use below that if $a=a(\xi)$
depends only on $\xi$, then
$$
\Op_{\hsc}(a) = \F_\hsc^{-1} M_a \F_{\hsc},
$$
where $M_a$ denotes multiplication by $a$; i.e., in this case $\Op_{\hsc}(a)$
is simply a Fourier multiplier on $\mathbb R^d.$

We now return to considering the torus:~if $a(x,\xi) \in S^m(\mathbb R^d)$ and
is periodic, and if $v$ is a distribution on the torus, we can view
$v$ as a periodic (hence, tempered) distribution on $\mathbb R^d,$ and
define
$$
\big(\operatorname{Op}^{\torus^d_{R_\sharp}}_{\hsc}(a) v\big)=\big(\operatorname{Op}_{\hsc}(a) v\big),
$$
since the right side is again periodic; for details see, e.g., \cite[\S 5.3.1]{Zw:12}.  

If $A$ can be written in the form above, i.\,e.\ $A = \operatorname{Op}^{\torus^d_{R_\sharp}}_{\hsc}(a)$ with $a\in S^m$, we say that $A$ is a \emph{semiclassical pseudodifferential operator of order $m$} on the torus and
we write $A \in \Psi_{\hsc}^m(\mathbb T_{R_\sharp}^d)$; furthermore that we often abbreviate $ \Psi_{\hsc}^m(\mathbb T_{R_\sharp}^d)$ to $\Psi_{\hsc}^m$ in this Appendix. We use the notation $a \in \hsc^l S^m$  if $\hsc^{-l} a \in S^m$; similarly 
$A \in \hsc^l \Psi_\hsc^m$ if 
$\hsc^{-l}A \in \Psi_\hsc^m$.

\begin{theorem}\mythmname{Composition and mapping properties of
semiclassical pseudodifferential operators \cite[Theorem 8.10]{Zw:12}, \cite[Proposition E.17 and Proposition E.19]{DyZw:19}}\label{thm:basicP} If $A\in \Psi_{\hsc}^{m_1}$ and $B  \in \Psi_{\hsc}^{m_2}$, then
\begin{itemize}
\item[(i)]  $AB \in \Psi_{\hsc}^{m_1+m_2}$,
\item[(ii)]  $[A,B] \in \hsc\Psi_{\hsc}^{m_1+m_2-1}$,
\item[(iii)]  For any $s \in \mathbb R$, $A$ is bounded uniformly in $\hsc$ as an operator from $H_\hsc^s$ to $H_\hsc^{s-m_1}$.
\end {itemize}
\end{theorem}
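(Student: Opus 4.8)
\bpf[Proof proposal]
The plan is to prove all three statements first on $\mathbb R^d$, where the semiclassical symbol calculus is classical (see the references cited in the statement, and \cite[Chapters 4 and 8]{Zw:12}), and then transfer to the torus. The transfer is routine: if $a\in S^m(\mathbb R^d)$ is $(2R_\sharp)$-periodic then $\operatorname{Op}_\hsc(a)$ maps periodic distributions to periodic distributions, the definition of $\operatorname{Op}^{\mathbb T^d_{R_\sharp}}_\hsc(a)$ simply records this, and the norm $\|\cdot\|_{H^s_\hsc(\mathbb T^d_{R_\sharp})}$ in \eqref{eq:Hhnorm} is uniformly (in $\hsc$) equivalent to the $H^s_\hsc(\mathbb R^d)$-norm of a periodic distribution cut off to a fundamental domain; alternatively one develops the calculus directly in Fourier series as in \cite[\S5.3.1]{Zw:12}. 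So I would work on $\mathbb R^d$ from here on.

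For (i) and (ii) I would write $A=\operatorname{Op}_\hsc(a)$, $B=\operatorname{Op}_\hsc(b)$ with $a\in S^{m_1}$, $b\in S^{m_2}$, insert \eqref{Oph} twice, perform the inner integration, and apply the method of stationary phase in the variables dual to $x-y$. This gives $AB=\operatorname{Op}_\hsc(a\# b)$ with $a\# b\in S^{m_1+m_2}$ and the asymptotic expansion
$$
a\# b\;\sim\;\sum_{\gamma}\frac{\hsc^{|\gamma|}}{\ri^{|\gamma|}\gamma!}\,\partial_\xi^\gamma a\,\partial_x^\gamma b\;=\;ab+\frac{\hsc}{\ri}\,\partial_\xi a\cdot\partial_x b+r,\qquad r\in\hsc^2 S^{m_1+m_2-2}.
$$
The leading term $ab\in S^{m_1+m_2}$ gives (i). Subtracting the expansions of $a\# b$ and $b\# a$, the $O(1)$ terms cancel and the $O(\hsc)$ term is $\tfrac{\hsc}{\ri}\{a,b\}\in\hsc S^{m_1+m_2-1}$, which gives (ii).

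For (iii) I would first note that the elliptic Fourier multipliers $\operatorname{Op}_\hsc(\langle\xi\rangle^{r})\in\Psi^r_\hsc$ are, by the very definition \eqref{eq:Hhnorm} of the norm, isomorphisms $H^s_\hsc\to H^{s-r}_\hsc$ with $\hsc$-uniform bounds, and that they compose exactly (as Fourier multipliers). Hence it suffices to bound $\operatorname{Op}_\hsc(\langle\xi\rangle^{s-m_1})\,A\,\operatorname{Op}_\hsc(\langle\xi\rangle^{-s})$ on $L^2$ uniformly in $\hsc$; by part (i) this operator lies in $\Psi^0_\hsc$, so everything reduces to the Calder\'on--Vaillancourt estimate: for $c\in S^0$, $\operatorname{Op}_\hsc(c)$ is bounded on $L^2(\mathbb R^d)$ with operator norm controlled by finitely many of the seminorms $C_{\alpha,\beta}$ of \eqref{eq:Sm}, uniformly in $\hsc$. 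I would prove this by the $T^*T$ method together with the Cotlar--Stein lemma: split $c$ using a lattice partition of unity in $x$ and a dyadic partition in $\xi$, estimate the pairwise products of the resulting pieces by non-stationary phase to obtain almost-orthogonality, and sum; since $\xi\mapsto\hsc\xi$ is an $L^2$-isometry the bound is independent of $\hsc$.

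The step I expect to be the main obstacle is precisely this $L^2$-boundedness in (iii): parts (i) and (ii) are essentially bookkeeping with the stationary-phase expansion, while Calder\'on--Vaillancourt / Cotlar--Stein is the one genuinely analytic ingredient, and reducing arbitrary order to order $0$ is then routine. On the torus the only extra points --- both immediate --- are that $a\# b$ stays periodic (it is built from derivatives and products of periodic symbols) and that periodising $\operatorname{Op}_\hsc(a)$ introduces no boundary contributions.
\epf
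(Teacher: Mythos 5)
Your proposal is correct and follows exactly the standard argument from the references the paper cites for this theorem (\cite[Theorems 4.11--4.12, 4.23, and 8.10]{Zw:12}, \cite[Propositions E.17 and E.19]{DyZw:19}); the paper itself offers no independent proof, only these citations. The composition/stationary-phase expansion for (i)--(ii), the reduction of (iii) to order zero via the Fourier multipliers $\operatorname{Op}_\hsc(\langle\xi\rangle^{r})$ followed by Calder\'on--Vaillancourt, and the periodisation to $\mathbb T^d_{R_\sharp}$ (best done directly in Fourier series, as you note, rather than by sharp cutoff to a fundamental domain) are all as in those sources.
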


\paragraph{Residual class.} 
We say that $A =O(\hsc^\infty)_{\Psi^{-\infty}_\hsc}$ if, for any $s>0$ and $N\geq 1$, there exists $C_{s,N}>0$ such that
\beq \label{eq:residual}
\Vert A \Vert_{H_\hsc^{-s} \rightarrow H_\hsc^{s}} \leq C_{N,s} \hsc^N;
\eeq
i.e.~$A\in \Psi_\hsc^{-\infty}$ and furthermore all of its operator norms are bounded by any algebraic power of $\hsc$.

\paragraph{Principal symbol $\sigma_{\hsc}$.}
Let the quotient space $ S^m/\hsc S^{m-1}$ be defined by identifying elements 
of  $S^m$ that differ only by an element of $\hsc S^{m-1}$. 
For any $m$, there is a linear, surjective map
$$
\sigma^m_{\hsc}:\Psi_\hsc ^m \to S^m/\hsc S^{m-1},
$$
called the \emph{principal symbol map}, 
such that, for $a\in S^m$,
\beq\label{eq:symbolone}
\sigma_\hsc^m\big(\Op^{\torus^d_{R_\sharp}}_\hsc(a)\big) = a \quad\text{ mod } \hsc S^{m-1};
\eeq
see \cite[Page 213]{Zw:12}, \cite[Proposition E.14]{DyZw:19} (observe that \eqref{eq:symbolone} implies that 
$\operatorname{ker}(\sigma^m_{\hsc}) = \hsc\Psi_\hsc ^{m-1}$).

When applying the map $\sigma^m_{\hsc}$ to 
elements of $\Psi^m_\hsc$, we denote it by $\sigma_{\hsc}$ (i.e.~we omit the $m$ dependence) and we use $\sigma_{\hsc}(A)$ to denote one of the representatives
in $S^m$ (with the results we use then independent of the choice of representative).

\paragraph{Operator wavefront set $\operatorname{WF}_{\hsc}$.}  
We say that $(x_0,\zeta_0) \in {\Tbar}^*\mathbb T_{R_\sharp}^d$ is \emph{not}
in the \emph{semiclassical operator wavefront set} of
$A = \Optorus(a) \in \Psi_{\hsc}^m$, denoted by
$\operatorname{WF}_{\hsc} A$, if there exists a neighbourhood $U$ of
$(x_0,\zeta_0)$ such that for all multi-indices $\alpha, \beta$ and all
$N\geq 1$ there exists $C_{\alpha,\beta,U,N}>0$ (independent of
$\hsc$) such that, for all $0<\hsc\leq \hsc_0$,
\beq\label{eq:microsupport} |\partial_x^\alpha \partial_\xi^\beta
a(x,\xi)| \leq C_{\alpha, \beta, U, N} \hsc^N \langle \xi \rangle^{-N}\quad\tfa (x,\RC(\xi))\in U.
\eeq
For $\zeta_0=\RC(\xi_0)$ in the interior of $B^d,$ the factor $\ang{\xi}^{-N}$ is moot, and the definition
merely says that
outside its semiclassical operator wavefront set an operator
is the quantization of a symbol that vanishes faster than any algebraic power of $\hsc$; see \cite[Page
194]{Zw:12}, \cite[Definition E.27]{DyZw:19}.  For $\zeta_0\in \partial
B^d=S^{d-1},$ by contrast, the definition says that the symbol decays rapidly
in a conic neighborhood of the direction $\zeta_0,$ in addition to decaying in $\hsc.$

Three properties of the
semiclassical operator wavefront set that we use in \S
\ref{subsec:high} are 
\begin{equation} \label{eq:WFempt}
\operatorname{WF}_\hsc A = \emptyset \quad\text{ if and only if }\quad A = O(\hsc^\infty)_{\Psi^{-\infty}_\hsc},
\end{equation}
(see  \cite[E.2.3]{DyZw:19}),
\begin{equation} \label{eq:WFsum}
\operatorname{WF}_\hsc (A+B) \subset \operatorname{WF}_\hsc A \cup \operatorname{WF}_\hsc B,
\end{equation}
(see  \cite[E.2.4]{DyZw:19}),
\beq \label{eq:WFprod} \operatorname{WF}_{\hsc}
(AB) \subset \operatorname{WF}_{\hsc} A \cap
\operatorname{WF}_{\hsc}B, \eeq (see \cite[\S 8.4]{Zw:12},
\cite[E.2.5]{DyZw:19}), \beq \label{eq:WFdis} \operatorname{WF}_{\hsc}
(A) \cap \operatorname{WF}_{\hsc} (B) = \emptyset \quad\text{ implies that } \quad AB =
O(\hsc^\infty)_{\Psi^{-\infty}_\hsc}, \eeq 
 (as a consequence of \eqref{eq:WFempt} and \eqref{eq:WFprod}), and 
and 
\beq\label{eq:support}
\operatorname{WF}_{\hsc}\big( \Op_\hsc(a)\big) \subset \supp \,a \eeq
(since
$(\supp \, a)^c \subset (\operatorname{WF}_{\hsc}( \Op_\hsc(a)))^c$ by
\eqref{eq:microsupport}).

\paragraph{Ellipticity.} 
We  say that $B\in \Psi_\hsc^m$ is \emph{elliptic} at $(x_0,
  \zeta_0) \in  \Tbar^*\mathbb T_{R_\sharp}^d$ if there exists a
  neighborhood $U$ of $(x_0, \zeta_0)$ and $c>0$, independent of $\hsc$, such that 
\beqs
\langle \xi \rangle^{-m} \big|\sigma_\hsc(B)(x,\xi)\big| \geq c \quad \tfa (x,\RC(\xi))\in U  \text{ and for all } 0<\hsc\leq \hsc_0.
\eeqs

A key feature of elliptic operators is that they are microlocally
invertible; this is reflected in the following result.
\newcounter{fnnumber}
\begin{theorem}\mythmname{Elliptic parametrix \cite[Proposition E.32]{DyZw:19}} \footnote{We highlight that working in a compact manifold
allows us to dispense with the proper-support assumption appearing in \cite[\S4]{LaSpWu:22}, \cite[Proposition E.32, Theorem E.33]{DyZw:19}.\label{fn:comp}} \setcounter{fnnumber}{\thefootnote}
 \label{thm:para} 
Let $A \in \Psi_\hsc^{\ell}(\mathbb T_{R_\sharp}^d)$ and $B \in \Psi_\hsc^{m}(\mathbb T_{R_\sharp}^d)$ be such 
that $B$ is elliptic on $\operatorname{WF_\hsc}(A)$.
Then there exist $S, S' \in \Psi_\hsc^{\ell-m}(\mathbb T_{R_\sharp}^d)$ such that
$$
A = BS + O(\hsc^\infty)_{\Psi^{-\infty}_\hsc} = S'B + O(\hsc^\infty)_{\Psi^{-\infty}_\hsc},
$$ 
with
$$
\operatorname{WF}_\hsc S \subset \operatorname{WF}_\hsc A , \qquad \operatorname{WF}_\hsc S' \subset \operatorname{WF}_\hsc A.
$$
\end{theorem}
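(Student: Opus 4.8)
\emph{Proof proposal.} This is \cite[Proposition~E.32]{DyZw:19}; I sketch the argument, the only genuinely new point being that on the compact manifold $\mathbb T^d_{R_\sharp}$ the properness hypotheses used there (and in \cite[\S4]{LaSpWu:20a}) are automatic -- see footnote~\ref{fn:comp}. The plan is to build a two-sided microlocal parametrix for $B$ alone and then premultiply/postmultiply by $A$. First I would note that $\operatorname{WF}_\hsc A$ is a closed, hence compact, subset of $\overline{T}^{*}\mathbb T^d_{R_\sharp}$ (compactness of the torus together with the radial compactification in the fibres is what removes the need for a proper-support assumption). Since $B$ is elliptic at every point of $\operatorname{WF}_\hsc A$ and ellipticity is, by definition, an open condition, a covering/compactness argument produces an open set $V \supset \operatorname{WF}_\hsc A$ and a constant $c>0$, independent of $\hsc$, with $\langle\xi\rangle^{-k}\big|\sigma_\hsc(B)(x,\xi)\big| \geq c$ for $(x,\RC(\xi))\in V$ and all $0<\hsc\leq\hsc_0$.

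Next I would carry out the standard iterative parametrix construction for $B$ on $V$. Fixing $\chi\in S^0$ with $\chi\equiv 1$ on an open set $V'$ with $\operatorname{WF}_\hsc A \subset V' \subset \overline{V'}\subset V$ and $\operatorname{supp}\chi\subset V$, the function $\chi/\sigma_\hsc(B)$ lies in $S^{-k}$, and $C_0:=\operatorname{Op}_\hsc\!\big(\chi/\sigma_\hsc(B)\big)\in\Psi^{-k}_\hsc$ satisfies, by Theorem~\ref{thm:basicP} and \eqref{eq:multsymb}, $C_0B=\operatorname{Op}_\hsc(\chi)+\hsc E_1$ with $E_1\in\Psi^{-1}_\hsc$; iterating to remove the error successively on $V'$ and Borel-summing the resulting asymptotic series gives $C\in\Psi^{-k}_\hsc$ with $\operatorname{WF}_\hsc C\subset\overline{V}$ and with $\operatorname{WF}_\hsc(I-CB)\cap V'=\emptyset$ and $\operatorname{WF}_\hsc(I-BC)\cap V'=\emptyset$ (the analogous computation on the other side yields the second relation, or one checks directly that left and right parametrices agree microlocally on $V'$). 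Since $\operatorname{WF}_\hsc A\subset V'$, this gives in particular $\operatorname{WF}_\hsc(I-CB)\cap\operatorname{WF}_\hsc A=\emptyset=\operatorname{WF}_\hsc(I-BC)\cap\operatorname{WF}_\hsc A$.

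Finally I would set $S':=AC$ and $S:=CA$, both in $\Psi^{m-k}_\hsc$ by Theorem~\ref{thm:basicP}. Then $S'B=ACB=A+A(CB-I)$, and since $\operatorname{WF}_\hsc(CB-I)\cap\operatorname{WF}_\hsc A=\emptyset$, \eqref{eq:WFdis} gives $A(CB-I)=O(\hsc^\infty)_{\Psi^{-\infty}}$, so $A=S'B+O(\hsc^\infty)_{\Psi^{-\infty}}$; identically $BS=BCA=A+(BC-I)A=A+O(\hsc^\infty)_{\Psi^{-\infty}}$. The wavefront bounds are immediate from \eqref{eq:WFprod}: $\operatorname{WF}_\hsc S'=\operatorname{WF}_\hsc(AC)\subset\operatorname{WF}_\hsc A\cap\operatorname{WF}_\hsc C\subset\operatorname{WF}_\hsc A$, and likewise $\operatorname{WF}_\hsc S=\operatorname{WF}_\hsc(CA)\subset\operatorname{WF}_\hsc A$. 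The main obstacle is the iterative construction and Borel resummation producing $C$, together with the bookkeeping of symbol orders and of wavefront sets near fibre-infinity; this is entirely standard (and, on the compact manifold $\mathbb T^d_{R_\sharp}$, cleaner than the $\mathbb R^d$ version), so in practice one simply invokes \cite[Proposition~E.32]{DyZw:19}.
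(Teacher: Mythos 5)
Your proposal is correct and follows exactly the route the paper itself takes: the paper offers no proof of Theorem \ref{thm:para} beyond citing \cite[Proposition E.32]{DyZw:19}, and your sketch is the standard microlocal parametrix construction underlying that result (compactness of $\operatorname{WF}_\hsc A$ in $\overline{T}^*\mathbb{T}^d_{R_\sharp}$, uniform ellipticity on a neighbourhood, iterative inversion with Borel summation, then $S=CA$, $S'=AC$ and \eqref{eq:WFdis}). Nothing further is needed.
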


\paragraph{Functional Calculus.}

The main properties of the functional calculus in the black-box
  context are recalled in
  \S\ref{BBFC}; here we record a simple result that we need about
  functions of the flat Laplacian.
  
For $f$ a Borel function, the operator $f(-\hsc^2 \Lap)$ is defined on
smooth functions on the torus (and indeed on distributions if $f$ has
polynomial growth) by the functional calculus for the flat Laplacian,
i.e., by the Fourier multiplier
\begin{equation} \label{eq:A:mult}
f(-\hsc^2 \Lap) v=\sum_{j \in \ZZ^d} \widehat v(j) f(\hsc^2 \lvert j\rvert ^2 \pi^2/R_{\sharp}^2) e_j.
\end{equation}
It is reassuring to discover that indeed it is precisely
the quantization of $f(\lvert\xi\rvert^2).$  Since our quantization
procedure was defined in terms of Fourier transform rather than
Fourier series, this is not obvious a priori.

\begin{lemma}\label{lem:toruscalculus}
  For $f \in S^m(\RR^1)$ (i.e., $f$ is a function of only one variable),
  $$
f(-\hsc^2 \Lap) = \Op_{\hsc} f(\lvert\xi\rvert^2).
  $$
  \end{lemma}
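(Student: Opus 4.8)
The plan is to compare the two operators by testing against the Fourier basis $e_j$ and the semiclassical Fourier transform machinery. The left-hand side is defined by \eqref{eq:A:mult} as the Fourier multiplier acting diagonally on $e_j$ with eigenvalue $f(\hsc^2|j|^2\pi^2/R_\sharp^2)$, which is exactly $f(|\xi_j|^2)$ evaluated at the lattice point $\xi_j := \hsc j\pi/R_\sharp$ corresponding to the semiclassical dual variable of $e_j$. The right-hand side, being the quantisation of a symbol $a=a(\xi)=f(|\xi|^2)$ that depends only on $\xi$, equals by the remark after \eqref{Oph} the Fourier multiplier $\mathcal F_\hsc^{-1} M_{f(|\cdot|^2)} \mathcal F_\hsc$ \emph{on $\mathbb R^d$}, restricted to periodic distributions via the identification $\operatorname{Op}^{\mathbb T^d_{R_\sharp}}_\hsc(a) v = \operatorname{Op}_\hsc(a)v$. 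So the content of the lemma is that the $\mathbb R^d$ Fourier multiplier $\mathcal F_\hsc^{-1} M_{f(|\cdot|^2)}\mathcal F_\hsc$, applied to a periodic distribution, agrees with the torus Fourier series multiplier with the correct eigenvalues.

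First I would record that $S^m(\mathbb R^1)\ni f$ means $f(|\xi|^2)\in S^m(\mathbb R^d)$ (polynomial growth), so both sides act on tempered/periodic distributions and it suffices to check the identity on the dense set of trigonometric polynomials, in fact on each basis function $e_j$. Second, I would compute $\operatorname{Op}_\hsc(f(|\cdot|^2)) e_j$ directly. Since $e_j(x) = (2R_\sharp)^{-n/2}\exp(\ri\pi j\cdot x/R_\sharp) = (2R_\sharp)^{-n/2}\exp(\ri x\cdot\xi_j/\hsc)$ with $\xi_j = \hsc\pi j/R_\sharp$, and a Fourier multiplier with symbol $a(\xi)$ sends the plane wave $x\mapsto e^{\ri x\cdot\xi_0/\hsc}$ to $a(\xi_0)e^{\ri x\cdot\xi_0/\hsc}$ (this is the defining property $\Op_\hsc(a) = \mathcal F_\hsc^{-1}M_a\mathcal F_\hsc$ for $\xi$-only symbols applied to a pure exponential — an elementary computation with the oscillatory integral \eqref{Oph}, performing the $y$-integration to get $2\pi\hsc$ times a delta), one gets $\operatorname{Op}_\hsc(f(|\cdot|^2))e_j = f(|\xi_j|^2) e_j = f(\hsc^2\pi^2|j|^2/R_\sharp^2) e_j$. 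Comparing with \eqref{eq:A:mult}, which gives $f(-\hsc^2\Delta)e_j = f(\hsc^2\pi^2|j|^2/R_\sharp^2)e_j$, the two agree on every $e_j$, hence on all trigonometric polynomials, hence on all periodic distributions by density and continuity of both sides on the relevant distribution spaces (continuity of $\operatorname{Op}_\hsc(a)$ on tempered distributions is \cite[\S4.4]{Zw:12}; continuity of the Fourier-series multiplier is immediate from the polynomial bound on $f$).

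The one genuinely delicate point — the ``main obstacle'' — is the plane-wave computation $\operatorname{Op}_\hsc(a)(e^{\ri x\cdot\xi_0/\hsc}) = a(\xi_0)e^{\ri x\cdot\xi_0/\hsc}$ when $a$ merely has polynomial growth rather than being Schwartz, since then the integral in \eqref{Oph} does not converge absolutely and must be interpreted as an oscillatory/iterated integral. I would handle this either by the standard regularisation (insert a cutoff $\chi(\eps\xi)$, $\chi\in C_c^\infty$, $\chi(0)=1$, compute for the resulting Schwartz symbol, and pass $\eps\to 0$ using that $\operatorname{Op}_\hsc(\chi(\eps\cdot)a)\to\operatorname{Op}_\hsc(a)$ in the appropriate operator topology), or simply by noting that for a symbol depending only on $\xi$ the identity $\operatorname{Op}_\hsc(a) = \mathcal F_\hsc^{-1}M_a\mathcal F_\hsc$ is exactly the statement quoted in the excerpt just before the lemma (``if $a=a(\xi)$ depends only on $\xi$, then $\Op_\hsc(a) = \F_\hsc^{-1}M_a\F_\hsc$''), so the plane-wave eigenrelation follows at once from $\mathcal F_\hsc(e^{\ri x\cdot\xi_0/\hsc})$ being a delta at $\xi_0$. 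Using the latter route, the proof reduces to the bookkeeping of matching the lattice $\xi_j = \hsc\pi j/R_\sharp$ against the eigenvalue formula in \eqref{eq:A:mult}, which is routine.
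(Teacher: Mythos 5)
Your proposal is correct and follows essentially the same route as the paper: identify the semiclassical Fourier transform of a periodic distribution as a sum of Dirac deltas at the lattice points $\xi_j=\hsc\pi j/R_\sharp$, use the Fourier-multiplier form of $\Op_\hsc$ for $\xi$-only symbols, and match the resulting eigenvalues against \eqref{eq:A:mult}. The only cosmetic difference is that the paper computes $\F_\hsc v$ once for general $v$ and reads off the multiplier, whereas you test on each $e_j$ and extend by density; the delta-function bookkeeping is identical. (A trivial slip: $f\in S^m(\RR^1)$ gives $f(|\xi|^2)\in S^{2m}(\RR^d)$, not $S^m$, but this has no bearing on the argument.)
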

  \begin{proof}
    First note that for $v \in \CI(\torus^d_{R_\sharp}),$
    \begin{align}\nonumber
      v = \sum \widehat v(j) e_j
      &= (2 R_{\sharp})^{-d/2}\int_{\mathbb R^d} \sum_{j \in \ZZ^d} \widehat v(j)
      \delta(\xi-\hsc \pi j/R_{\sharp})
      \exp(\ri\xi x/\hsc) \, \rd\xi\\
      &=(2\pi\hsc)^d (2R_{\sharp})^{-d/2} \F_\hsc ^{-1}  \sum_{j \in \ZZ^d} \widehat v(j)
      \delta(\xi-\hsc \pi j/R_{\sharp}).\label{eq:vFourier}
\end{align}
Thus, if we take the semiclassical Fourier transform of $v,$ regarded
as a periodic function, 
$$
\F_\hsc v (\xi) = (2\pi\hsc)^d (2R_{\sharp})^{-d/2}  \sum_{j \in \ZZ^d} \widehat v(j)
      \delta(\xi-\hsc \pi j/R_{\sharp}).
      $$
      Consequently,
      $$
      \begin{aligned}
\F_\hsc \big[ f(-\hsc^2 \Lap) v\big] (\xi) &= (2\pi\hsc)^d
(2R_{\sharp})^{-d/2}  \sum_{j \in \ZZ^d}  f(\hsc^2 \pi^2 \lvert j \rvert^2 /R_{\sharp}^2) \widehat v(j)
\delta(\xi-\hsc \pi j/R_{\sharp})\\
&= (2\pi\hsc)^d
(2R_{\sharp})^{-d/2}  \sum_{j \in \ZZ^d}  f(\lvert \xi \rvert^2) \widehat v(j)
\delta(\xi-\hsc \pi j/R_{\sharp})\\
&=f(\lvert \xi \rvert^2) \F_\hsc[v](\xi),
      \end{aligned}
      $$
      by \eqref{eq:vFourier},
      from which 
      $$
f(-\hsc^2 \Lap) v = \Op_\hsc f(\lvert \xi \rvert^2) (v).
      $$
\end{proof}

\section{Proof of \eqref{eq:gro} for the transmission problem}\label{app:gro}

By the min-max principle for self-adjoint operators with
compact resolvent (see, e.g., \cite[Page 76, Theorem 13.1]{ReSi:78})
\beq\label{eq:app0}
\lambda_{n}  = \inf_{X\in\Phi_{n}(\mathcal{D}^\sharp)}\sup_{u\in X}
\frac{\langle P^\#u, u \rangle_{\beta, c}}
{
\N{u_+}^2_{L^2(\mathbb{T}^d_{R_\sharp}\setminus\obstacle_-)} + \beta^{-1} \N{u_-/c}^2_{L^2(\obstacle_-)}
},
\eeq
where $(\lambda_{n})_{n\geq1}$ denotes the ordered eigenvalues of
$P^\#$, $\mathcal{D}^\sharp$ is the domain of $P^{\#}$ defined by \eqref{eq:defdomsharp} (with $\cD$ given by \eqref{eq:domain_transmission}),  $\Phi_{n}(\mathcal{D}^\sharp)$ the set of all $n$-dimensional
subspaces of $\mathcal{D}^\sharp$, and $\langle\cdot,\cdot \rangle_{\beta, c}$
 is the scalar product defined implicitly by the norm in the denominator (which is the norm in Lemma \ref{lem:transmission}).
 
By Green's identity and the definition of $\cD^\sharp$, 
\beq\label{eq:app1}
\langle P^\#u, u \rangle_{\beta, c} = \hsc^2\langle A_+ \nabla u_+, \nabla u_+\rangle_{L^2(\mathbb{T}^d_{R_\sharp}\setminus\obstacle_-)} + \beta^{-1}\hsc^2 \langle A_- \nabla u_-, \nabla u_-\rangle_{L^2(\obstacle_-)}.
\eeq
Furthermore, 
\beq\label{eq:app2}
\frac{
\langle A_+ \nabla u_+, \nabla u_+\rangle_{L^2(\mathbb{T}^d_{R_\sharp}\setminus\obstacle_-)} + \beta^{-1} \langle A_- \nabla u_-, \nabla u_-\rangle_{L^2(\obstacle_-)}
}{
\N{u_+}^2_{L^2(\mathbb{T}^d_{R_\sharp}\setminus\obstacle_-)} + \beta^{-1} \N{u_-/c}^2_{L^2(\obstacle_-)}
}
\geq 
\frac{\min \big( (A_+)_{\min}, \, \beta^{-1} (A_-)_{\min}\big)
}{
\max \big( 1, \, \beta^{-1} (c_{\min})^{-2}\big)
}
\frac{
\N{\nabla u}^2_{L^2(\mathbb{T}^d_{R_\sharp})}
}{
\N{u}^2_{L^2(\mathbb{T}^d_{R_\sharp})}
}.
\eeq
The definition of $\cD^\sharp$ implies that
\beq\label{eq:app3}
\cD^\sharp \subset  \big\{(u_{1}, u_{2}) \in H^{1}(\mathbb{T}^d_{R_\sharp}\backslash\obstacle_-)\oplus H^{1}(\obstacle_-) \tst u_{1}=u_{2}\text{ on }\partial\obstacle_-\big\} = H^1(\mathbb{T}^d_{R_\sharp}).
\eeq
Using \eqref{eq:app1}, \eqref{eq:app2}, and \eqref{eq:app3} in \eqref{eq:app0}, we have 
\beqs
\lambda_{n}  \geq 
\frac{\min \big( (A_+)_{\min}, \, \beta^{-1} (A_-)_{\min}\big)
}{
\max \big( 1, \, \beta^{-1} (c_{\min})^{-2}\big)
}
\left(\inf_{X\in\Phi_{n}(H^1(\mathbb{T}^d_{R_\sharp}))}\sup_{u\in X}
\frac{
\hsc^2\N{\nabla u}^2_{L^2(\mathbb{T}^d_{R_\sharp})}
}{
\N{u}^2_{L^2(\mathbb{T}^d_{R_\sharp})}
}\right).
\eeqs
The result then follows from the min-max principle for the eigenvalues of the Laplacian on the torus.

\section*{Acknowledgements}

The authors thank Luis Escauriaza (Universidad del Pa\'{i}s Vasco/Euskal Herriko Unibertsitatea) for useful discussions related to the paper \cite{EsMoZh:17}. We also thank the referees and the associate editor for their constructive comments that improved the organisation of the paper. JG acknowledges support from EPSRC grant EP/V001760/1. 
DL and EAS acknowledge support from EPSRC grant EP/1025995/1. JW was
partly supported by Simons Foundation grant 631302 and by NSF grant DMS--2054424.

\footnotesize{
\bibliographystyle{plain}
\bibliography{../biblio_combined_sncwadditions}
}

\end{document}